\documentclass[a4paper]{amsart}

\usepackage{a4wide,amsfonts,amssymb,amsmath,color,todonotes,tikz-cd,mathtools,graphicx}
\usepackage[english]{babel}
\usepackage[style=ext-numeric,
 maxbibnames=99,   
 giveninits=false, 
 sortcites=true,   
 backend=biber]{biblatex}
\allowdisplaybreaks

\newtheorem{lem}{Lemma}[section]

\newtheorem{theo}[lem]{Theorem}

\newtheorem{cor}[lem]{Corollary}
\newtheorem{rem}[lem]{Remark}

\newtheorem{ex}[lem]{Example}
\def\ol{\overline}
\def\ot{\leftarrow}

\def\wto{\xrightharpoonup}
\def\cpt{\hookrightarrow}
\def\equi{\Leftrightarrow}
\def\qequi{\quad\equi\quad}
\def\qqequi{\quad\qequi\quad}

\newcommand{\wh}[1]{\widehat{#1}}
\newcommand{\wt}[1]{\widetilde{#1}}
\DeclareUnicodeCharacter{25C7}{\bigdiamond}

\def\n{\mathbb{N}}

\def\reals{\mathbb{R}}

\def\rt{\reals^{3}}

\def\om{\Omega}
\def\ga{\Gamma}
\def\eps{\varepsilon}

\def\cg{c_{\mathsf{g}}}
\def\cf{c_{\mathsf{f}}}
\def\nor{\mathsf{n}}
\def\tan{\mathsf{t}}
\def\dir{\mathsf{D}}
\def\neu{\mathsf{N}}

\def\sfL{\mathsf{L}}
\def\sfH{\mathsf{H}}
\def\sfC{\mathsf{C}}
\def\calH{\mathcal{H}}
\def\calO{\mathcal{O}}
\def\cI{\mathcal{I}}
\def\sfD{\mathsf{D}}
\def\sfN{\mathsf{N}}

\renewcommand{\L}{\sfL}
\newcommand{\Lt}{\L^{2}}
\renewcommand{\H}{\sfH}
\newcommand{\Hc}{\mathring{\sfH}}
\newcommand{\Hd}{\dot{\sfH}}
\newcommand{\C}{\sfC}
\newcommand{\Cc}{\mathring{\sfC}}
\newcommand{\Cd}{\dot{\sfC}}
\newcommand{\Ci}{\C^{\infty}}
\newcommand{\Cic}{\Cc^{\infty}}

\DeclareMathOperator{\supp}{supp}

\DeclareMathOperator{\tr}{tr}
\DeclareMathOperator{\trs}{tr_{\mathsf{s}}}
\DeclareMathOperator{\trn}{tr_{\mathsf{n}}}
\DeclareMathOperator{\trt}{tr_{\mathsf{t}}}
\DeclareMathOperator{\trtx}{tr_{\mathsf{tx}}}

\DeclareMathOperator{\dist}{dist}
\DeclareMathOperator{\p}{\partial}
\DeclareMathOperator{\id}{id}
\DeclareMathOperator{\na}{\nabla}
\DeclareMathOperator{\nac}{\mathring{\na}}
\DeclareMathOperator{\rot}{rot}
\DeclareMathOperator{\rotc}{\mathring{\rot}}

\def\div{\operatorname{div}}
\DeclareMathOperator{\divc}{\mathring{\div}}

\DeclareMathOperator{\db}{d}

\def\cD{\operatorname{\mathcal{D}}}
\def\M{\operatorname{M}}
\def\S{\operatorname{S}}
\def\T{\operatorname{T}}
\def\A{\operatorname{A}}
\def\B{\operatorname{B}}

\def\Pot{\operatorname{P}}

\def\adj{\operatorname{adj}}
\def\P{\Pot}

\newcommand{\norm}[1]{\|#1\|}
\newcommand{\bnorm}[1]{\big\|#1\big\|}

\newcommand{\scp}[2]{\langle#1,#2\rangle}
\newcommand{\bscp}[2]{\big\langle#1,#2\big\rangle}
\newcommand{\Bscp}[2]{\Big\langle#1,#2\Big\rangle}
\newcommand{\dual}[2]{\langle\!\langle#1,#2\rangle\!\rangle}
\newcommand{\bdual}[2]{\big\langle\!\big\langle#1,#2\big\rangle\!\big\rangle}
\newcommand{\tvec}[2]{\begin{bmatrix}#1\\#2\end{bmatrix}}
\newcommand{\thvec}[3]{\begin{bmatrix}#1\\#2\\#3\end{bmatrix}}
\def\subset{\subseteq}
\def\Xi{\Theta}

\title[The Closed Range Property of the de Rham Complex in Unbounded Domains]
{The Closed Range Property
and Gaffney's inequality\\
of the de Rham Complex in Unbounded Domains}
\author{Dirk Pauly}
\address{Institut f\"ur Analysis, Technische Universit\"at Dresden, Germany}
\email[Dirk Pauly]{dirk.pauly@tu-dresden.de}
\author{Marcus Waurick}
\address{Institut f\"ur Angewandte Analysis, Technische Universit\"at Bergakademie Freiberg, Germany}
\email[Marcus Waurick]{marcus.waurick@math.tu-freiberg.de}
\keywords{de Rham complex, Maxwell's equations, closed ranges, Friedrichs/Poincar\'e estimates, low frequency asymptotics,
Gaffney estimates, unbounded domains, cubes, cuboids}
\subjclass{47F05, 47F10, 35P05, 35P15, 58J10, 58A12}
\date{\today}
\thanks{\it{Corresponding Author.}~Dirk Pauly}
\dedicatory{Dedicated to our Teacher and Dear Friend\\ 
Rainer Picard (The Captain)\\ 
on the occasion of his 80th birthday}

\begin{document}


\begin{abstract}
The classical Poincar\'e estimate establishes closedness of the range of the gradient in unweighted $\Lt(\om)$-spaces 
as long as $\om\subset\rt$ is contained in a slab, that is, $\om$ is bounded in one direction.  
Here, as a main observation, we provide closed range results for the $\rot$-operator, 
if (and only if) $\om$ is bounded in two directions. 
Along the way, we characterise closed range results for all the differential operators 
of the primal and dual de Rham complex in terms of directions of boundedness of the underlying domain.

As a main application, one obtains the existence of a spectral gap near the $0$ 
of the Maxwell operator allowing for exponential stability results 
for solutions of Maxwell's equations with sufficient damping in the conductivity. 

Our results are based on the validity of Gaffney's (in)equality and the transition 
of the same to unbounded (simple) domains as well as on the stability 
of closed range results under bi-Lipschitz regular transformations. 
The latter technique is well-known and detailed in the appendix; for the results concerning Gaffney's estimate, 
we shall provide accessible, simple proofs using mere standard results.

Moreover, we shall present non-trivial examples and a closed range result for $\rot$ 
with mixed boundary conditions on a set bounded in one direction only.
\end{abstract}


\vspace*{-10ex}
\maketitle


\setcounter{tocdepth}{3}
\vspace*{-5ex}
{\footnotesize
\tableofcontents}


\newpage

\section{Introduction}
\label{sec:intro}

This article is concerned with closed range results for the Maxwell operator given by the block operator matrix 
$$\M=\begin{bmatrix} 0 & -\rot\\\rotc & 0\end{bmatrix}$$
with the two $\rot$-type operators in $\rt$ being endowed with either full homogeneous boundary conditions
($\rotc$) or non at all ($\rot$). These operators are (Hilbert space) adjoints to one another, making $\M$ skew-selfadjoint,
so that it suffices to consider $\rot$-type operators alone for the question of a closed range, since, 
by Banach's closed range theorem, the one ($\rotc$) has closed range if and only if the other ($\rot$) has closed range. 

Quite generally, for closed range results there is a multitude of applications.  
We refer to Section \ref{sec:app} for a more detailed account of the consequences of a closed range. 
In any case, it is well known that a closed range serves as the decisive assumption 
to develop solution theories for partial differential equations,
see, e.g., the main theorem in \cite{TW14} confirming this for 
(possibly nonlinear) elliptic partial differential equations in variational form,
or \cite{P2020a} for linear problems together with functional a posteriori error estimates. 
Moreover, as a closed range constitutes a spectral gap in a punctured neighbourhood of $0$, 
such results can be used to establish exponential stability (see \cite{DTW24}) 
for solutions of time-dependent partial differential equations. 
The mentioned cases highlight Maxwell's equations. 
It is all the more standard to have similar results for the heat equation,
see, e.g., \cite[Section 11.3]{STW22}.

For $\rot$, typically, closed range results can be deduced from certain compact embedding theorems 
and this requires boundedness and sufficient smoothness of the underlying domains. 
For unbounded domains, compact embedding results are, as a rule, not true. 
In order to still obtain closed range results for, e.g., exterior domains (i.e., complements of compacts) 
polynomially weighted Sobolev spaces are introduced, see  \cite{L1986a} or the series of papers \cite{zbMATH03281102}, 
 \cite{SW1983a}, 
 \cite{zbMATH04210630,zbMATH00147080,zbMATH00147438,zbMATH00682629,zbMATH01089055,zbMATH01089056}, 
 \cite{PWW2001a},
 \cite{P2006a,P2007a,P2008a,P2008b,P2012a} 
for a non-exhaustive list.

Hence, for unbounded domains, a different strategy needs to be coined if the considered space are to remain unweighted $\Lt$-spaces. 
The hope for closed range results for differential operators on unbounded domains in unweighted spaces 
is not completely unfounded as the classical Poincar\'e-inequality proves closed range results 
for the gradient with Dirichlet boundary conditions on domains being bounded only in one direction. 
Also, particularly for the $\rot$-operator, \cite[Example 6.5]{W25} shows that there exists a class of examples 
of unbounded domains with closed range for the $\rot$-operator, even though this very example 
is a mere rather elementary consequence of the bounded domain case. Quite dramatically, 
this viewpoint has changed by the impressive \cite[Example 10]{ABMW2019a}, which motivated the present research. 
Indeed, this example is the first we became aware of with a genuinely unbounded domain 
in that the closed range result cannot be easily deduced by facts from the bounded domain case. 
In fact, to obtain closedness of the range of $\rot$ explicit computations using
the (discrete) Fourier-transformation were employed. Naturally, the used technique 
is restricted to the geometry of the considered infinite cylinder with bounded rectangular cross-section (or slight variations thereof). 
In the present article we shall endeavour on the path establishing closed range results 
for less particular geometric set-ups fostering more general functional analytic arguments.

Before we dive into the particulars, 
we take an abstract viewpoint and provide the simple functional analytic background for the theory to follow. 
Here and throughout the paper, we denote 
the domain of definition, kernel, and range of a linear operator $\A$
by $D(\A)$, $N(\A)$, and $R(\A)$, respectively. 
Moreover, $\bot\coloneqq\bot_{\H}$ denotes the orthogonal complement in a Hilbert space $\H$.

Let $\H_{0}$ and $\H_{1}$ be Hilbert spaces, and let 
$$\A:D(\A)\subset\H_{0}\to\H_{1}$$
be a densely defined and closed linear operator. 
The basis of our results is the following observation, which itself can be proved using the 
\emph{closed graph theorem}.

\begin{theo}[characterisation of a closed range]
\label{theo:fatblem1}
The following conditions are equivalent:
\begin{itemize}
\item[\bf(i)] 
$R(\A)\subset\H_{1}$ closed
\hfill{\rm\bf(closed range)}
\item[\bf(ii)] 
$\exists\,c_{\A}>0
\quad
\forall\,x\in D({\A})\cap N(\A)^\bot
\qquad
\norm{x}_{\H_{0}}\leq c_{\A}\norm{\A x}_{\H_{1}}$
\hfill{\rm\bf(closed range inequality)}
\end{itemize}
\end{theo}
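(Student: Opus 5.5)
We prove the two implications separately. The common device is the \emph{reduced operator}
$$\A_{\bot}\coloneqq\A|_{D(\A)\cap N(\A)^{\bot}}:D(\A)\cap N(\A)^{\bot}\subset N(\A)^{\bot}\to R(\A).$$
Since $\A$ is closed, $N(\A)$ is a closed subspace of $\H_{0}$, so $N(\A)^{\bot}$ is a Hilbert space. The operator $\A_{\bot}$ is injective and closed, because its graph is the intersection of the closed graph of $\A$ with the closed set $N(\A)^{\bot}\times\H_{1}$. It also has the same range as $\A$: for $x\in D(\A)$ we write $x=x_{0}+x_{\bot}$ with $x_{0}\in N(\A)$. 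Then $x_{0}\in D(\A)$, hence $x_{\bot}\in D(\A)\cap N(\A)^{\bot}$, and $\A x=\A x_{\bot}$.

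\textbf{(i)$\Rightarrow$(ii).} Assume $R(\A)$ is closed, so $R(\A)$ is a Hilbert space. Then $\A_{\bot}$ is a bijective closed operator between the Hilbert spaces $N(\A)^{\bot}$ and $R(\A)$. Its inverse $\A_{\bot}^{-1}:R(\A)\to N(\A)^{\bot}$ is everywhere defined, and its graph is the flipped graph of $\A_{\bot}$, hence closed. The closed graph theorem now gives that $\A_{\bot}^{-1}$ is bounded. Setting $c_{\A}\coloneqq\norm{\A_{\bot}^{-1}}$ (or any positive number if $R(\A)=\{0\}$) and taking $y=\A x$ yields
$$\norm{x}_{\H_{0}}=\norm{\A_{\bot}^{-1}\A x}_{\H_{0}}\leq c_{\A}\norm{\A x}_{\H_{1}}\qquad\text{for all }x\in D(\A)\cap N(\A)^{\bot}.$$

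\textbf{(ii)$\Rightarrow$(i).} Let $y_{n}=\A x_{n}\to y$ in $\H_{1}$. By the reduction above we may assume $x_{n}\in D(\A)\cap N(\A)^{\bot}$. The closed range inequality gives
$$\norm{x_{n}-x_{m}}_{\H_{0}}\leq c_{\A}\norm{y_{n}-y_{m}}_{\H_{1}},$$
so $(x_{n})$ is Cauchy and converges to some $x\in\H_{0}$. Since $(x_{n},\A x_{n})\to(x,y)$ and $\A$ is closed, we get $x\in D(\A)$ and $\A x=y$. Hence $y\in R(\A)$, and $R(\A)$ is closed.

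There is no real obstacle in this argument. The only points requiring care are two checks: that $D(\A)\cap N(\A)^{\bot}$ carries the full range, and that the reduced operator is closed. Both are needed to apply the closed graph theorem in the forward direction. Density of $D(\A)$ is not used.
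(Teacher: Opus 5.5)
Your proof is correct and follows the approach the paper indicates. The paper gives no detailed proof; it only says the result follows from the closed graph theorem and refers to the FA-ToolBox references. Your argument fills this in with the reduced operator $\A|_{D(\A)\cap N(\A)^{\bot}}$ (the paper's $\wh{\A}$ from Section 2): the closed graph theorem gives (i)$\Rightarrow$(ii), and a Cauchy-sequence argument with closedness of $\A$ gives (ii)$\Rightarrow$(i), which is the standard proof.
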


Thus, in order to establish (i),
one may provide a proof for the \emph{closed range inequality}, 
i.e., the \emph{Friedrichs/Poincar\'e type estimate} (ii).
The argument employing compact embedding results and, thus, asking for the underlying domain to be bounded, 
establishes (ii) using a contradiction argument, see, e.g., 
 \cite[FA-ToolBox]{P2020a} and \cite{P2015b,P2019a,P2019b,PW2021a,DTW24}. 
Thus, for unbounded domains, (ii) from the above theorem needs to be addressed in a more direct way. 

For this, and throughout this paper, 
we turn to the following more specific setting 
and let $\om\subset\rt$ be an open set. Note that $\om$ might not be connected.
We recall the operators
\begin{align*}
\na:D(\na)\subset\Lt(\om)&\to\Lt(\om)^{3};
&
u\mapsto&\na u,\\
\rot:D(\rot)\subset\Lt(\om)^{3}&\to\Lt(\om)^{3};
&
E\mapsto&\na\times\,E,\\
\div:D(\div)\subset\Lt(\om)^{3}&\to\Lt(\om);
&
E\mapsto&\na\cdot\,E
\end{align*}
with domains of definition
\begin{align*}
D(\na)&\coloneqq\big\{u\in\Lt(\om)\colon\na u\in\Lt(\om)^{3}\big\},\\
D(\rot)&\coloneqq\big\{E\in\Lt(\om)^{3}\colon\rot E\in\Lt(\om)^{3}\big\},\\
D(\div)&\coloneqq\big\{E\in\Lt(\om)^{3}\colon\div E\in\Lt(\om)\big\}
\end{align*}
as the maximal $\Lt$-realisations of $\na$, $\rot$, and $\div$,
where formally $\na\coloneqq [\p_{1}\,\p_{2}\,\p_{3}]^{\top}$
denotes the nabla operator/gradient and $\times$ and $\cdot$,
the vector and scalar product of $\rt$, respectively.
We also define the gradient of a vector field $E$ as the transpose of the Jacobian
$$\na E\coloneqq\begin{bmatrix}\na E_{1}&\na E_{2}&\na E_{3}\end{bmatrix}\in\Lt(\om)^{3\times3},
\qquad
E_{j}\in\H^{1}(\om),
\qquad
j\in\{1,2,3\},$$
and write $E\in\H^{1}(\om)$.
From now on, we skip the powers in the notation of the Lebesgue spaces
and simply write $\Lt(\om)$.

It is well-known and, in fact, easy to prove, that $\na$, $\rot$, and $\div$ are closed and densely defined linear operators, 
see \cite{L1986a} for a classical reference or, e.g., \cite[Proposition 6.1.1]{STW22} for a recent source.
We let $\H(\rot,\om)$ be the Hilbert space given by $D(\rot)$ endowed with the respective graph norm. 
In the same way we have $\H^{1}(\om)=\H(\na,\om)$ and $\H(\div,\om)$.
Any closed subspace of $\H^{1}(\om)$, $\H(\rot,\om)$, and $\H(\div,\om)$ 
containing test functions/vector fields (compactly supported and smooth)
describes suitable boundary conditions for $\na$, $\rot$, and $\div$. 
In particular, considering full homogeneous Dirichlet boundary conditions, 
$$\nac\coloneqq-\div^{*},
\qquad
\rotc\coloneqq\rot^{*},
\qquad
\divc\coloneqq-\na^{*}$$
corresponds to scalar, tangential, and normal boundary conditions, respectively. 
Note that 
$$\Hc^{1}(\om)\coloneqq\Hc(\na,\om)\coloneqq D(\nac),
\qquad
\Hc(\rot,\om)\coloneqq D(\rotc),
\qquad
\Hc(\div,\om)\coloneqq D(\divc)$$
are simply the closures of test functions/fields in the respective graph norm.
These operators form the well known primal and dual de Rham Hilbert complex, 
meaning that $R(\nac)\subset N(\rotc)$, $R(\rotc)\subset N(\divc)$ as well as $R(\na)\subset N(\rot)$, $R(\rot)\subset N(\div)$, 
see the classic source \cite{L1986a} or, e.g., \cite[Proposition 6.1.4]{STW22}, and denoted by
\begin{equation}
\label{eq:derham1}
\def\arrowlength{8ex}
\def\arrowdistance{.8}
\begin{tikzcd}[column sep=\arrowlength]
\Lt(\om) 
\ar[r, rightarrow, shift left=\arrowdistance, "\nac"] 
\ar[r, leftarrow, shift right=\arrowdistance, "-\div"']
&
\Lt(\om) 
\ar[r, rightarrow, shift left=\arrowdistance, "\rotc"] 
\ar[r, leftarrow, shift right=\arrowdistance, "\rot"']
& 
\Lt(\om) 
\arrow[r, rightarrow, shift left=\arrowdistance, "\divc"] 
\arrow[r, leftarrow, shift right=\arrowdistance, "-\na"']
& 
\Lt(\om).
\end{tikzcd}
\end{equation}

To indicate the dependence of the underlying domain,
we sometimes use the notations $\rot=\rot_{\om}$ and $\rotc=\rotc_{\om}$
(same for $\na$ and $\div$).


In order to identify a class of (potentially unbounded) domains so that $\rot$ has closed range, 
we appeal to Theorem \ref{theo:fatblem1} from above, however, with a slight detour. 
This detour is \emph{Gaffney's estimate}, which bounds the $\Lt$-norm of the Jacobian 
of a vector field in terms of its $\rot$ and $\div$. 
Only relying on elementary integration by parts, for smooth and compactly supported vector fields $\phi$, one obtains
$$\norm{\na\phi}_{\Lt(\om)}\leq 
\big(\norm{\rot\phi}_{\Lt(\om)}^{2}
+\norm{\div\phi}_{\Lt(\om)}^{2}\big)^{1/2}.$$ 
It is remarkable that, for some $\om$, it is possible to still obtain such an inequality, 
even though the vectors fields do \emph{not} satisfy homogeneous boundary on all of its components. 
For ease of reference, we single out these domains of interest next.

We call an open set $\om$ 
\textbf{Gaffney domain}\footnote{The inequality 
relies on the geometry of $\om$ and, thus, can have the form 
$$\norm{\na E}_{\Lt(\om)}^{2}\leq
\cg^{2}\big(\norm{\rot E}_{\Lt(\om)}^{2} +\norm{\div E}_{\Lt(\om)}^{2}\big)$$
for some $\cg>0$. 
In the present text, we are concerned with $\cg=1$ only, 
so we keep the definition as simple as possible. 
Note that this is for convenience of the reader only. 
The theory to unfold goes through without difficulties also for $\cg\neq1$.}
if for all 
$$E\in\big(D(\rotc)\cap D(\div)\big)\cup\big(D(\rot)\cap D(\divc)\big)$$
both the following conditions hold:
\begin{itemize}
\item[\bf(i)] 
$E\in\H^{1}(\om)$, and
\item[\bf(ii)] 
$\norm{\na E}_{\Lt(\om)}^{2}\leq
\norm{\rot E}_{\Lt(\om)}^{2} +\norm{\div E}_{\Lt(\om)}^{2}$.
\end{itemize}
More particularly, $\om$ is called an \textbf{exact Gaffney domain}, if the inequality sign in (ii) can be replaced by an equality sign.

Hence, in order to establish $\om$ to be a (exact) Gaffney domain 
involves proving an $\H^{1}(\om)$-regularity result (to have (i)) 
and showing Gaffney's estimate to hold for $\rot$- and $\div$-regular vector fields satisfying 
at least one of the two associated homogeneous boundary conditions. 
One can find several publications, 
where the emphasis is put on establishing the 
(by no means trivial) Gaffney's estimate for highly involved geometric set-ups. 
However, in order to establish closed range results for $\rot$ the regularity requirement cannot be neglected. 
Indeed, we may now provide the strategy of how to show closedness of the range in the following:

Let $\om$ be a Gaffney domain. 
In order to identify $\rotc_\om$ having a \emph{closed range} we proceed as follows:
\begin{itemize}
\item[\bf(i)] 
By Theorem \ref{theo:fatblem1} it suffices to prove the corresponding \emph{closed range inequality}, i.e.,
$$\exists c>0\quad
\forall\,E\in D(\rotc)\cap N(\rotc)^\bot
\qquad
\norm{E}_{\Lt(\om)}\leq
c\norm{\rot E}_{\Lt(\om)}.$$
\item[\bf(ii)] 
Since $N(\rotc)^\bot=\overline{R(\rotc^{*})}=\overline{R(\rot)}$, the \emph{complex property} of the de Rham complex yields $N(\rotc)^\bot\subset N(\div)\subset D(\div)$ and, hence,
$$D(\rotc)\cap N(\rotc)^\bot\subset D(\rotc)\cap D(\div).$$
\item[\bf(iii)] 
Since $\om$ is a Gaffney domain, for $E\in D(\rotc)\cap N(\div)$ we have
$$E\in\H^{1}(\om)
\quad\wedge\quad
\norm{\na E}_{\Lt(\om)}\leq\norm{\rot E}_{\Lt(\om)}.$$
\item[\bf(iv)] 
One shows the existence of some $\cf>0$ such that
for $u\in\H^{1}(\om)$ satisfying suitable homogeneous Dirichlet boundary condition 
one has a \emph{Friedrichs' estimate} for the gradient
$$\norm{u}_{\Lt(\om)}\leq
\cf\norm{\na u}_{\Lt(\om)}.$$
\item[\bf(v)] 
One shows that the individual components of 
$E\in\H^{1}(\om)\cap D(\rotc)$ 
satisfy the boundary conditions admissible in (iv).
\item[\bf(vi)] 
By (iii) and (iv), 
one obtains for all $E\in D(\rotc)\cap N(\rotc)^\bot\subset D(\rotc)\cap N(\div)$
$$\norm{E}_{\Lt(\om)}^{2}
=\sum_{j=1}^{3}\norm{E_{j}}_{\Lt(\om)}^{2}\leq 
\cf^{2}\sum_{j=1}^{3}\norm{\na E_{j}}_{\Lt(\om)}^{2}
=\cf^{2}\norm{\na E}_{\Lt(\om)}^{2}
\leq\cf^{2}\norm{\rot E}_{\Lt(\om)}^{2}.$$
\end{itemize}
On a grand scheme of things the structural properties needed for this approach to work
is the Hilbert complex structure, the de Rham complex \eqref{eq:derham1} forms a prominent example of. 
On a technical side, the most demanding part is the establishing of $\om$ to be a Gaffney domain in the first place. 
This is where most of the attention of the present paper is devoted to. 
We emphasise that we provide the corresponding arguments for smooth, convex as well as cube-like domains in an accessible manner. 

Particularly, due to the very elementary form of the Friedrichs' estimate, 
the closed range results themselves are shown for (unbounded) cuboids only, first. 
Closed range results for domains with curved boundaries are then established 
by translating closed range result from cuboids to other domains using bi-Lipschitz transformations. 
This then also includes convex domains.

The upshot of it all can be summarised rather neatly, which is nurtured from its correctness for cuboids 
and which will be shown in the course of this manuscript in our main Theorem \ref{theo:main}. 
For $\om$ being the image of a global bi-Lipschitz transformation of a cuboid the following equivalences are true:
\begin{align*}
&\bullet
&
R(\nac_{\om})\text{ closed} 
&\quad\;\equi
&
R(\div_{\om})\text{ closed} 
&\qqequi
\om\text{ is bounded in one direction.}\\
&\bullet
&
R(\rotc_{\om})\text{ closed} 
&\quad\;\equi
&
R(\rot_{\om})\text{ closed} 
&\qqequi
\om\text{ is bounded in two directions.}\\
&\bullet
&
R(\divc_{\om})\text{ closed} 
&\quad\;\equi
&
\hspace*{-2ex}
R(\na_{\om})\text{ closed} 
&\qqequi
\om\text{ is bounded in three directions}.
\end{align*}
Surprisingly, the example with mixed boundary conditions presented at the end of Section \ref{sec:clrancubes} 
shows that $\rot$ can have a closed range even if $\om$ is \emph{bounded in just one direction}.

Coming back to the Maxwell operator from the beginning, and, in fact, 
employing examples induced from the wave equation, 
we emphasise that our results are generally applicable for all kinds of wave propagation 
in wave guides $\om$ being bounded in one or two directions,
which appear to be of high interest.

Before we describe the course of the manuscript, 
we revisit parts of the literature mainly concerned with Gaffney's inequality. 
Generally, Gaffney's estimate (including the regularity part) is well-known, 
and there are standard references that provide the respective content, see, among others, 
\cite{S1982a,G1985a,C1991a} or \cite[Theorem 2.17]{ABDG1998a},
\cite[Lemma 3.2, Appendix A]{P2019a}. 
Nevertheless, since there are, to the best of our knowledge, 
no sources for easy digestion, 
we decided to provide self-contained and simple proofs using rather elementary computations 
without any serious technical difficulties. 
In any case, it might be interesting to know that the compactness 
of the Maxwell embedding for general bounded weak Lipschitz domains $\om$,
cf.~\cite{P1984a},
makes use only of Gaffney's inequality for the unit cube or the unit ball,
cf.~Theorem \ref{theo:gaffneyconvexbd} and Theorem \ref{theo:gaffneyconvex},
the transformation theorem, cf.~Theorem \ref{theo:transtheo},
and the classical Rellich--Kondrachov selection theorem for $\H^{1}(\om)$ functions. 
This result is often referred to as the
Picard--Weber--Weck selection theorem for bounded weak Lipschitz domains,
cf.~\cite{W1974a} and \cite{W1980a,C1990a,W1993a,J1997a,BPS2016a}. Moreover, Gaffney's estimate (and the included regularity) is also the main tool for proving that the Maxwell constant is bounded from above
by the Poincar\'e constant if the underlying domain is convex,
cf.~\cite{P2015b,P2017a,P2019a}.
In other words, the first positive Maxwell eigenvalue is bounded from below
by the square root of the first positive Neumann eigenvalue of the Laplacian.

We briefly sketch the course of this manuscript. 
After having provided a list of substantial implications of a closed range for (abstract) 
operators in Section \ref{sec:app}, in Section \ref{sec:ibp}, 
we give simple proofs of integration by parts formulas for smooth domains as well as cuboids. 
Then, in Section \ref{sec:gaffneyineq}, 
we establish that convex domains (independently of any boundedness) are Gaffney domains. 
We also show that cuboids are exact Gaffney domains. 
In passing, we show that the space of harmonic Dirichlet or Neumann vector fields is trivial for convex domains. 
Section \ref{sec:clrancubes} is devoted to characterise closed range results 
for (possibly unbounded) cuboids depending on the number of their directions of boundedness. 
This entails the application of the above mentioned strategy and, at the same time, 
it involves counterexamples showing that the directions of boundedness estimates yielding closed range are sharp. 
This is complemented by the above mentioned example of a realisation 
of $\rot$ with mixed boundary conditions and closed range on a domain being boundedness in one direction, only.
In Section \ref{sec:clranlip} we generalise our results to admissible global Lipschitz domains 
and provide some explicit examples. 
The technical background needed for this section is provided in Appendix \ref{sec:transtheo}, 
where we establish the \emph{transformation theorem}, 
for which we mention \cite{zbMATH01868846} as a different source for the particular case of $\C^{1,1}$-transformations.

Next, we turn to applications of closed range results. 
Reader familiar with the functional analytic consequences of operators 
with closed range rather interested in the actual proof of the closed range statements 
along with Gaffney's inequality in the situations mentioned may skip the next section entirely. 
However, note that the consequences of a closed range are remarkably profound 
and shed functional analytic light on problems frequently addressed 
in applied PDEs like low frequency asymptotics or exponential stability.

\section{Applications}
\label{sec:app}

The main property of ((skew)-selfadjoint) operators with closed range is a spectral gap around the origin. 
In fact, this is the core result that is being proved 
and used in all the of the following more concrete applications to follow. 
This spectral gap is based on the following extension of Theorem \ref{theo:fatblem1}, 
where we assume as it is done throughout this section that 
$$\A:D(\A)\subset\H_{0}\to\H_{1}$$
is a closed and densely defined linear operator from Hilbert space $\H_{0}$ to Hilbert space $\H_{1}$. 
We introduce its restriction to $N(\A)^{\bot}=N(\A)^{\bot_{\H_{0}}}=\ol{R(\A^{*})}$, 
the corresponding \emph{reduced version}, given by
$$\wh{\A}\coloneqq\A|_{N(\A)^{\bot}}:D(\wh{\A})\subset N(\A)^{\bot}\to\ol{R(\A)},
\qquad
D(\wh{\A})\coloneqq D(\A)\cap N(\A)^{\bot}.$$

\begin{theo}
\label{theo:bddinv} 
The following conditions are equivalent:
\begin{itemize}
\item[\bf(i)] 
$R(\A)\subset\H_{1}$ is closed.
\hfill{\rm\bf(closed range)}
\item[\bf(iii)] 
$\wh{\A}^{-1}:R(\A)\to D(\wh{\A})$ is bounded.
\hfill{\rm\bf(bounded inverse)}
\end{itemize}
\end{theo}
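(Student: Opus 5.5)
The plan is to reduce the statement to Theorem \ref{theo:fatblem1}, which already establishes the equivalence of the closed range property and the closed range inequality. First I record the elementary facts about $\wh{\A}$. It is injective, because $N(\wh{\A})=N(\A)\cap N(\A)^{\bot}=\{0\}$. Its range is $R(\wh{\A})=R(\A)$: for $x\in D(\A)$, decompose $x=x_{0}+x_{\bot}$ with $x_{0}\in N(\A)$ and $x_{\bot}\in N(\A)^{\bot}$. Since $N(\A)\subset D(\A)$, we have $x_{\bot}\in D(\A)\cap N(\A)^{\bot}=D(\wh{\A})$ and $\A x=\A x_{\bot}$. Hence $\wh{\A}^{-1}:R(\A)\to D(\wh{\A})$ is a well-defined linear map.

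For (i)$\Rightarrow$(iii), I invoke Theorem \ref{theo:fatblem1} to obtain $c_{\A}$ with $\norm{x}_{\H_{0}}\leq c_{\A}\norm{\A x}_{\H_{1}}$ for all $x\in D(\A)\cap N(\A)^{\bot}$. Given $y\in R(\A)$, set $x=\wh{\A}^{-1}y$. Then
$$\norm{\wh{\A}^{-1}y}_{\H_{0}}\leq c_{\A}\norm{y}_{\H_{1}},$$
so $\wh{\A}^{-1}$ is bounded with norm at most $c_{\A}$. If boundedness is meant with respect to the graph norm on $D(\wh{\A})$, the same estimate gives $\norm{x}_{\H_{0}}^{2}+\norm{\A x}_{\H_{1}}^{2}\leq(1+c_{\A}^{2})\norm{y}_{\H_{1}}^{2}$.

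For (iii)$\Rightarrow$(i), boundedness of $\wh{\A}^{-1}$ with constant $c$ means exactly that $\norm{x}_{\H_{0}}\leq c\norm{\A x}_{\H_{1}}$ for every $x\in D(\wh{\A})$. This is condition (ii) of Theorem \ref{theo:fatblem1}, so $R(\A)$ is closed.

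If a self-contained argument is preferred for this direction, I would take $y_{n}=\A x_{n}\to y$ with $x_{n}\in D(\wh{\A})$. Boundedness makes $(x_{n})$ Cauchy, so $x_{n}\to x$ for some $x$. Closedness of $\A$ then gives $x\in D(\A)$ and $\A x=y$.

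No step is a real obstacle, since the substance lies in Theorem \ref{theo:fatblem1}. The only point needing care is the reduction $R(\wh{\A})=R(\A)$, which uses $N(\A)\subset D(\A)$ and the orthogonal decomposition of $\H_{0}$.
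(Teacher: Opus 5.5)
Your proof is correct and matches what the paper intends: the paper gives no proof of its own and defers to the FA-ToolBox references, but it introduces the statement as an extension of Theorem \ref{theo:fatblem1}, whose omitted item (ii) is exactly the closed range inequality. You identify that inequality as a restatement of the boundedness of $\wh{\A}^{-1}$, after checking $R(\wh{\A})=R(\A)$, so your reduction is precisely that extension; your self-contained Cauchy-sequence argument for (iii)$\Rightarrow$(i) is also correct.
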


The well-known closed range theorem furthermore asserts that $\A$ and $\A^{*}$ have closed range only simultaneously. 
As a consequence, the corresponding reduced operators are simultaneously continuously invertible.

\begin{theo}[Banach's closed range theorem]
\label{theo:bcrt}
$$R(\A)\text{ closed in }\H_{1}
\qequi R(\A^{*})\text{ closed in }\H_{0}$$
\end{theo}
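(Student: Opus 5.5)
By symmetry it is enough to prove one implication. The operator $\A$ is closed and densely defined, so $\A^{*}$ is again closed and densely defined, and $\A^{**}=\A$. Hence if we show that $R(\A)$ closed implies $R(\A^{*})$ closed, the reverse implication follows by applying the same argument to $\A^{*}$ in place of $\A$. Throughout we use the orthogonal decompositions $\H_{0}=N(\A)\oplus\ol{R(\A^{*})}$ and $\H_{1}=N(\A^{*})\oplus\ol{R(\A)}$. These follow from $N(\A^{*})=R(\A)^{\bot}$ together with closedness of the kernels.

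\emph{Step 1: a closed range inequality for $\A^{*}$.} Assume $R(\A)$ is closed. By Theorem~\ref{theo:fatblem1}, applied to $\A$, there is $c_{\A}>0$ with $\norm{x}_{\H_{0}}\leq c_{\A}\norm{\A x}_{\H_{1}}$ for all $x\in D(\A)\cap N(\A)^{\bot}$. Now take $y\in D(\A^{*})\cap N(\A^{*})^{\bot}$. Since $R(\A)$ is closed, $N(\A^{*})^{\bot}=\ol{R(\A)}=R(\A)$, so $y=\A x$ for some $x$. Replacing $x$ by its projection onto $N(\A)^{\bot}$, we may assume $x\in D(\A)\cap N(\A)^{\bot}$. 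This is allowed because $N(\A)\subset D(\A)$, so the projected element still lies in $D(\A)$ and has the same image.

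\emph{Step 2: the estimate.} We compute
\[
\norm{y}_{\H_{1}}^{2}=\scp{\A x}{y}_{\H_{1}}=\scp{x}{\A^{*}y}_{\H_{0}}\leq\norm{x}_{\H_{0}}\norm{\A^{*}y}_{\H_{0}}\leq c_{\A}\norm{\A x}_{\H_{1}}\norm{\A^{*}y}_{\H_{0}}=c_{\A}\norm{y}_{\H_{1}}\norm{\A^{*}y}_{\H_{0}}.
\]
Dividing by $\norm{y}_{\H_{1}}$ gives $\norm{y}_{\H_{1}}\leq c_{\A}\norm{\A^{*}y}_{\H_{0}}$, and this holds for all $y\in D(\A^{*})\cap N(\A^{*})^{\bot}$. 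Theorem~\ref{theo:fatblem1}, applied to the closed densely defined operator $\A^{*}$, then yields that $R(\A^{*})$ is closed, with the same constant $c_{\A^{*}}=c_{\A}$.

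\emph{Main obstacle.} The only delicate point is in Step 1: representing an arbitrary $y\in D(\A^{*})\cap N(\A^{*})^{\bot}$ as $\A x$ with $x$ orthogonal to the kernel. This uses closedness of $R(\A)$ precisely through the identity $N(\A^{*})^{\bot}=\ol{R(\A)}=R(\A)$. The projection step additionally requires $N(\A)\subset D(\A)$ and that $N(\A)$ is closed, which holds because $\A$ is closed. Everything else reduces to the adjoint identity $\scp{\A x}{y}_{\H_{1}}=\scp{x}{\A^{*}y}_{\H_{0}}$ and the Cauchy--Schwarz inequality.
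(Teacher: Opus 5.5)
Your proof is correct. Transferring the closed range inequality from $\A$ to $\A^{*}$ via $\norm{y}_{\H_{1}}^{2}=\scp{x}{\A^{*}y}_{\H_{0}}$ for $y=\A x$ with $x\in D(\A)\cap N(\A)^{\bot}$, and then getting the converse from $\A^{**}=\A$, is the standard argument. The paper gives no proof of its own but defers to the FA-ToolBox references listed right after the statement; as far as I recall, those use this same duality estimate, which gives $c_{\A^{*}}\leq c_{\A}$ and hence, by symmetry, $c_{\A}=c_{\A^{*}}$.
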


There are plenty of applications for operators with closed range.
Amongst these, the so-called FA-ToolBox,
cf.~\cite{P2019b,P2020a,PS2022a,PZ2019a,PZ2023a}, 
which provides techniques for solving linear equations in the context of closed Hilbert complexes. 
The mentioned references also contain proofs of Theorems \ref{theo:bddinv} and \ref{theo:bcrt}. 
In the following lines, we rather focus on consequences of a combination of these two theorems. 
In fact, there are three operators obtained by standard constructions 
from $\A$ that also have closed range and to which the observations to come particularly apply to. 
These are
$$\T_{1}\coloneqq\A^{*}\A,
\qquad
\T_{2}\coloneqq\begin{bmatrix}0&\A^{*}\\\A&0\end{bmatrix},
\qquad
\T_{3}\coloneqq\begin{bmatrix}0&-\A^{*}\\\A&0\end{bmatrix},$$
defined as operator on their natural domains in $\H_{0}$, $\H_{0}\times\H_{1}$, and $\H_{0}\times\H_{1}$, respectively. 
Note that the former two operators are self-adjoint, whereas the latter is skew-selfadjoint. 
Indeed, the results being elementary calculations for $\T_{2}$ and $\T_{3}$. 
For $\T_{1}$, the corresponding result can be deduced by considering $(1+\T_{3})^{-1}(1-\T_{3})^{-1}$, 
which is well-defined by the skew-selfadjointness of $\T_{3}$ 
and self-adjoint itself\footnote{This idea of proof 
for the self-adjointness of $T_{1}$ has been communicated 
to us by Rainer Picard. 
The detailed argument can be found in \cite[Proposition B.4.17]{zbMATH07214200}.}. 
We recall the following standard set-up in the context of the de Rham complex 
in order to have a rich example class in mind for the abstract results to follow.

\begin{ex}
We recall the de Rham complex \eqref{eq:derham1}, i.e.,
\begin{equation}
\label{eq:derham2}
\def\arrowlength{12ex}
\def\arrowdistance{.8}
\begin{tikzcd}[column sep=\arrowlength]
\Lt(\om) 
\ar[r, rightarrow, shift left=\arrowdistance, "\A_{0}=\nac"] 
\ar[r, leftarrow, shift right=\arrowdistance, "\A_{0}^{*}=-\div"']
&
\Lt(\om) 
\ar[r, rightarrow, shift left=\arrowdistance, "\A_{1}=\rotc"] 
\ar[r, leftarrow, shift right=\arrowdistance, "\A_{1}^{*}=\rot"']
& 
\Lt(\om) 
\arrow[r, rightarrow, shift left=\arrowdistance, "\A_{2}=\divc"] 
\arrow[r, leftarrow, shift right=\arrowdistance, "\A_{2}^{*}=-\na"']
& 
\Lt(\om),
\end{tikzcd}
\end{equation}
and introduce the negative Dirichlet and Neumann Laplacians
\begin{align*}
-\Delta_{\dir}&\coloneqq\A_{0}^{*}\A_{0}=-\div\nac,
&
-\Delta_{\neu}&\coloneqq\A_{2}\A_{2}^{*}=-\divc\na,
\intertext{the negative Dirichlet and Neumann $\na$-$\div$ operators}
-\lozenge_{\dir}&\coloneqq\A_{2}^{*}\A_{2}=-\na\divc,
&
-\lozenge_{\neu}&\coloneqq\A_{0}\A_{0}^{*}v=-\nac\div,
\intertext{the negative Dirichlet and Neumann double-$\rot$ operators}
\square_{\dir}&\coloneqq\A_{1}^{*}\A_{1}=\rot\rotc,
&
\square_{\neu}&\coloneqq\A_{1}\A_{1}^{*}=\rotc\rot,
\end{align*}
and the negative vector Laplacians
\begin{align*}
-\vec{\Delta}_{\dir}&\coloneqq\A_{1}^{*}\A_{1}+\A_{0}\A_{0}^{*}=\square_{\dir}-\lozenge_{\neu}=\rot\rotc-\nac\div,\\
-\vec{\Delta}_{\neu}&\coloneqq\A_{1}\A_{1}^{*}+\A_{2}^{*}\A_{2}=\square_{\neu}-\lozenge_{\dir}=\rotc\rot-\na\divc.
\end{align*}
respectively. All of them are selfadjoint.
Moreover, we shall discuss the skew-selfadjoint operators 
\begin{align*}
\S_{0}&\coloneqq\begin{bmatrix}0&-\A_{0}^{*}\\\A_{0}&0\end{bmatrix}
=\begin{bmatrix}0&\div\\\nac&0\end{bmatrix},
&
\S_{1}&\coloneqq\begin{bmatrix}0&-\A_{1}^{*}\\\A_{1}&0\end{bmatrix}
=\begin{bmatrix}0&-\rot\\\rotc&0\end{bmatrix},\\
\S_{2}&\coloneqq\begin{bmatrix}0&-\A_{2}^{*}\\\A_{2}&0\end{bmatrix}
=\begin{bmatrix}0&\na\\\divc&0\end{bmatrix},
\end{align*}
occurring, among others, in linear acoustics and Maxwell's equations.
\end{ex}

Next, we turn to consequences of the closed range property 
for block operators of the form mentioned in the concluding lines of the previous examples.

\subsection{(Skew-)Selfadjoint Operators with Closed Range}

Throughout, let $\H$ be a Hilbert space and
$$\T\colon D(\T)\subset\H\to\H$$
be skew-selfadjoint or self-adjoint. 
Recall the notation for the reduced operator $\wh{\T}$, 
which turns out to be continuously invertible as long as $R(\T)\subset\H$ is closed. 
In other words, we have $0\in\rho(\wh{\T})$. 
By the openness of the reslvent set $\rho(\wh{\T})$, 
we obtain that $\wh{\T}-\lambda$ is continuously invertible in a neighbourhood of $0$. The quantified statement reads as follows.

\begin{lem}[spectral gap for the reduced operator]
\label{lem:fatblem3}
Let $R(\T)$ be closed and let $|\lambda|<1/c_{\T}$
with $c_{\T}\coloneqq\bnorm{\wh{\T}^{-1}}_{R(\T)\to R(\T)}$.
Then 
$$\forall\,x\in D(\wh{\T})\qquad
\norm{x}_{\H}\leq\wh{c}_{\T,\lambda}\bnorm{(\T-\lambda)x}_{\H},\qquad\quad
\wh{c}_{\T,\lambda}\coloneqq\frac{c_{\T}}{1-c_{\T}|\lambda|},$$
and 
$$N(\wh{\T}-\lambda)=\{0\},
\qquad
R(\wh{\T}-\lambda)=R(\T),$$
in particular, $R(\wh{\T}-\lambda)$ is closed.
Moreover, the inverse $(\wh{\T}-\lambda)^{-1}:R(\T)\to D(\wh{\T})$ is bounded with 
$\bnorm{(\wh{\T}-\lambda)^{-1}}_{R(\T)\to R(\T)}
\leq\wh{c}_{\T,\lambda}$.
In other words,
$B(0,1/c_{\T})\subset\rho(\wh{\T})$.
\end{lem}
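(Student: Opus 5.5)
The plan is a Neumann-series/perturbation argument built on Theorem \ref{theo:bddinv}. First we record the structure of the reduced operator. For $\T$ selfadjoint or skew-selfadjoint we have $N(\T)^{\bot}=\ol{R(\T^{*})}=\ol{R(\pm\T)}=R(\T)$, the last step because $R(\T)$ is closed. Hence $\wh{\T}$ maps $D(\wh{\T})=D(\T)\cap R(\T)$ into $R(\T)$ and is injective. By Theorem \ref{theo:bddinv}, $\wh{\T}^{-1}:R(\T)\to D(\wh{\T})\subset R(\T)$ is bounded, so $c_{\T}$ is finite, and
$$\norm{x}_{\H}\leq c_{\T}\norm{\T x}_{\H}\qquad\text{for all }x\in D(\wh{\T}).$$

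Next comes the a priori estimate. For $x\in D(\wh{\T})$, the triangle inequality gives
$$\norm{x}\leq c_{\T}\norm{\T x}\leq c_{\T}\bnorm{(\T-\lambda)x}+c_{\T}|\lambda|\norm{x}.$$
Since $c_{\T}|\lambda|<1$, we can absorb the last term and obtain $\norm{x}\leq\wh{c}_{\T,\lambda}\bnorm{(\T-\lambda)x}$. This immediately gives $N(\wh{\T}-\lambda)=\{0\}$. It also bounds the inverse once surjectivity is known.

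The main point is surjectivity, namely $R(\wh{\T}-\lambda)=R(\T)$. The inclusion $\subset$ holds because $D(\wh{\T})\subset R(\T)$ and $R(\T)$ is a linear space. For $\supset$, fix $y\in R(\T)$ and look for $x\in D(\wh{\T})$ with $(\wh{\T}-\lambda)x=y$. Write $x=\wh{\T}^{-1}z$ with $z\in R(\T)$. The equation then becomes
$$(1-\lambda\wh{\T}^{-1})z=y\qquad\text{in }R(\T).$$
The operator $\lambda\wh{\T}^{-1}$ is bounded on the Hilbert space $R(\T)$ with norm at most $|\lambda|c_{\T}<1$. So the Neumann series gives a unique $z$ with $\norm{z}\leq(1-c_{\T}|\lambda|)^{-1}\norm{y}$. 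Then $x\coloneqq\wh{\T}^{-1}z\in D(\wh{\T})$ solves the equation. We also get directly $\norm{x}\leq c_{\T}\norm{z}\leq\wh{c}_{\T,\lambda}\norm{y}$, matching the stated constant.

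Finally, $R(\wh{\T}-\lambda)=R(\T)$ is closed, and the inverse bound is the a priori estimate above. The conclusion $B(0,1/c_{\T})\subset\rho(\wh{\T})$ follows, viewing $\wh{\T}$ as an operator in the Hilbert space $R(\T)$. Closedness of $\wh{\T}-\lambda$ is inherited from $\T$, but bounded invertibility already suffices. The only delicate step is the identification $N(\T)^{\bot}=R(\T)$, which is what makes $\wh{\T}$ an operator in $R(\T)$ and lets the Neumann series run inside a single space. This is exactly where (skew-)selfadjointness and closedness of the range enter.
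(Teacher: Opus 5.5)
Your proof is correct, but it obtains surjectivity by a different mechanism than the paper. Both arguments begin with the same absorption estimate $\norm{x}\leq c_{\T}\norm{(\T-\lambda)x}+c_{\T}|\lambda|\norm{x}$ on $D(\wh{\T})$.

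The paper uses this lower bound to get $N(\wh{\T}-\lambda)=\{0\}$ and closedness of $R(\wh{\T}-\lambda)$, tacitly using that $\wh{\T}-\lambda$ is closed. It then identifies the range as $N(\wh{\T}^{*}-\ol{\lambda})^{\bot_{R(\T)}}$. This is all of $R(\T)$ because the reduced operator is again (skew-)selfadjoint in $R(\T)$, so the same estimate with $\ol{\lambda}$ or $-\ol{\lambda}$ shows the adjoint kernel is trivial. (In the skew-selfadjoint case the paper's $N(\wh{\T}-\ol{\lambda})$ should read $N(\wh{\T}+\ol{\lambda})$, which is harmless.) The inverse bound is then read off from Theorem~\ref{theo:bddinv}.

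You instead solve $(\wh{\T}-\lambda)x=y$ directly, substituting $x=\wh{\T}^{-1}z$ and inverting $1-\lambda\wh{\T}^{-1}$ on $R(\T)$ by a Neumann series. Your route is more elementary and constructive. It needs neither the adjoint of $\wh{\T}$ nor closedness of $\wh{\T}-\lambda$ to identify the range. It uses (skew-)selfadjointness only through $N(\T)^{\bot}=R(\T)$, so it works verbatim for any closed operator with closed range and this property. It also yields the expansion $(\wh{\T}-\lambda)^{-1}=\sum_{n\geq0}\lambda^{n}\wh{\T}^{-n-1}$ at once, which the paper proves separately, with the same Neumann-series idea, in Theorem~\ref{theo:fatblem5}.

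The paper's route is the standard resolvent argument (lower bound plus trivial adjoint kernel). It matches the closed-range-theorem style used throughout that section, e.g.\ in Theorem~\ref{theo:fatblem4}.
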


\begin{proof}
As mentioned above,
$\wh{\T}^{-1}:R(\T)\to D(\wh{\T})$ is bounded, and
$$\norm{x}_{\H}
\leq c_{\T}\norm{\T x}_{\H}
\leq c_{\T}\bnorm{(\T-\lambda)x}_{\H}+c_{\T}|\lambda|\norm{x}_{\H}$$
holds for $x\in D(\wh{\T})$,
showing the estimate for all $|\lambda|<1/c_{\T}$. 
Hence
$N(\wh{\T}-\lambda)=\{0\}$
and $R(\wh{\T}-\lambda)$ is closed with 
$$R(\wh{\T}-\lambda)
=N(\wh{\T}^{*}-\overline{\lambda})^{\bot_{R(\T)}}
=N(\wh{\T}-\overline{\lambda})^{\bot_{R(\T)}}
=R(\T).$$
Thus $\wh{(\wh{\T}-\lambda)}=\wh{\T}-\lambda$.
Therefore, 
$$\wh{(\wh{\T}-\lambda)}^{-1}=(\wh{\T}-\lambda)^{-1}:
R(\wh{\wh{\T}-\lambda})=R(\wh{\T}-\lambda)=R(\T)
\to D(\wh{\wh{\T}-\lambda})=D(\wh{\T}-\lambda)=D(\wh{\T})$$
is bounded by the above estimate and Theorem \ref{theo:bddinv}.
\end{proof}

The spectral gap for the reduced operators has consequence also for the non-reduced operator.

\begin{theo}[spectral gap]
\label{theo:fatblem4}
Let $R(\T)\subset\H$ closed; $0<|\lambda|<1/c_{\T}$, where $c_{\T}\coloneqq\bnorm{\wh{\T}^{-1}}_{R(\T)\to R(\T)}$.
Then $N(\T-\lambda)=0$ and $R(\T-\lambda)=\H$. Moreover, 
$$(\T-\lambda)^{-1}:\H\to D(\T)$$
is bounded with 
$\bnorm{(\T-\lambda)^{-1}}_{\H\to\H}\leq c_{\T,\lambda}$, 
where 
$\displaystyle c_{\T,\lambda}
\coloneqq\sqrt{\wh{c}_{\T,\lambda}^{2}+|\lambda|^{-2}}.$
In particular,
$$\forall\,x\in D(\T)
\qquad
\norm{x}_{\H}\leq c_{\T,\lambda}\bnorm{(\T-\lambda)x}_{\H}.$$
In other words,
$B(0,1/c_{\T})\setminus\{0\}\subset\rho(\T)$.
\end{theo}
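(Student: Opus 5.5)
We use the orthogonal decomposition $\H=N(\T)\oplus_{\H}N(\T)^{\bot}$, where $N(\T)^{\bot}=\ol{R(\T^{*})}=\ol{R(\T)}=R(\T)$. Here $\T^{*}=\pm\T$, and $R(\T)$ is closed by assumption. We then treat the two components separately: on $N(\T)$ the operator $\T-\lambda$ acts as $-\lambda$, and on $R(\T)$ it acts as $\wh{\T}-\lambda$, which is controlled by Lemma \ref{lem:fatblem3}.

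First, the domain splits compatibly. If $x\in D(\T)$, write $x=x_{0}+x_{1}$ with $x_{0}\in N(\T)\subset D(\T)$ and $x_{1}\in R(\T)$. Then $x_{1}=x-x_{0}\in D(\T)\cap N(\T)^{\bot}=D(\wh{\T})$, and
$$(\T-\lambda)x=-\lambda x_{0}+(\wh{\T}-\lambda)x_{1}.$$
The first summand lies in $N(\T)$ and the second in $R(\T)$, since $\T x_{1}\in R(\T)$ and $\lambda x_{1}\in R(\T)$. By orthogonality (Pythagoras),
$$\bnorm{(\T-\lambda)x}_{\H}^{2}=|\lambda|^{2}\norm{x_{0}}_{\H}^{2}+\bnorm{(\wh{\T}-\lambda)x_{1}}_{\H}^{2}.$$
Lemma \ref{lem:fatblem3} gives $\norm{x_{1}}_{\H}\leq\wh{c}_{\T,\lambda}\norm{(\wh{\T}-\lambda)x_{1}}_{\H}$, and trivially $\norm{x_{0}}_{\H}=|\lambda|^{-1}\norm{\lambda x_{0}}_{\H}$. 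Combining these,
$$\norm{x}^{2}=\norm{x_{0}}^{2}+\norm{x_{1}}^{2}\leq\big(|\lambda|^{-2}+\wh{c}_{\T,\lambda}^{2}\big)\bnorm{(\T-\lambda)x}^{2}.$$
In fact the sharper constant $\max\{|\lambda|^{-1},\wh{c}_{\T,\lambda}\}$ would suffice, but the stated $c_{\T,\lambda}$ certainly holds. This inequality yields $N(\T-\lambda)=\{0\}$ together with the asserted resolvent bound.

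For surjectivity, take $y\in\H$ and split $y=y_{0}+y_{1}$ with $y_{0}\in N(\T)$ and $y_{1}\in R(\T)$. Set $x_{0}\coloneqq-\lambda^{-1}y_{0}\in N(\T)$, which requires $\lambda\neq0$, and $x_{1}\coloneqq(\wh{\T}-\lambda)^{-1}y_{1}\in D(\wh{\T})$, which exists because $R(\wh{\T}-\lambda)=R(\T)$ by Lemma \ref{lem:fatblem3}. Then $x\coloneqq x_{0}+x_{1}\in D(\T)$ satisfies $(\T-\lambda)x=y$. Hence $\T-\lambda$ is bijective from $D(\T)$ onto $\H$, its inverse is bounded by the estimate above, and $\lambda\in\rho(\T)$.

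The only point requiring care is the identification $N(\T)^{\bot}=R(\T)$, which uses that $\T$ is (skew-)selfadjoint with closed range, and the fact that $\wh{\T}-\lambda$ maps into $R(\T)$ so that the orthogonal splitting of $(\T-\lambda)x$ is valid. Everything else is routine.
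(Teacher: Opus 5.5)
Your proof is correct and is essentially the paper's argument: the same splitting $\H=R(\T)\oplus N(\T)$ and $D(\T)=D(\wh{\T})\oplus N(\T)$, Lemma \ref{lem:fatblem3} on the range component, division by $-\lambda$ on the kernel component, and surjectivity from the explicit solution $x=(\wh{\T}-\lambda)^{-1}y_{1}-\lambda^{-1}y_{0}$, which is how the paper's proof also concludes. Your remark that the constant $\max\{|\lambda|^{-1},\wh{c}_{\T,\lambda}\}$ already suffices is also correct, because $\T-\lambda$ maps the two orthogonal components of $x$ to orthogonal components.
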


\begin{proof}
Let $x\in D(\T)$ with $(\T-\lambda)x=f\in\H.$ 
According to the standard orthogonal decomposition, $\H=R(\T)\oplus_{\H}N(\T)$, we infer $D(\T)=D(\wh{\T})\oplus_{\H}N(\T)$.
We see
\begin{align*}
D(\T)\ni x&=x_{R}+x_{N}\in D(\wh{\T})\oplus_{\H}N(\T),
&
D(\wh{\T})&=D(\T)\cap R(\T),\\
\H\ni f&=f_{R}+f_{N}\in R(\T)\oplus_{\H}N(\T),
\end{align*}
and obtain the equation
$(\T-\lambda)x_{R}-\lambda x_{N}=f_{R}+f_{N}$,
which separates into the two equations
$$(\wh{\T}-\lambda)x_{R}=f_{R}\in R(\T),
\qquad
-\lambda x_{N}=f_{N}\in N(\T)$$
by orthogonality. Lemma \ref{lem:fatblem3} yields
$$x_{R}=(\wh{\T}-\lambda)^{-1}f_{R},
\qquad
x_{N}=-\frac{1}{\lambda}f_{N},$$
and thus 
$$\norm{x}_{\H}^{2}
=\norm{x_{R}}_{\H}^{2}
+\norm{x_{N}}_{\H}^{2}
\leq\wh{c}_{\T,\lambda}^{2}\norm{f_{R}}_{\H}^{2}
+|\lambda|^{-2}\norm{f_{N}}_{\H}^{2}
\leq c_{\T,\lambda}^{2}\norm{f}_{\H}^{2}.$$
We conclude\footnote{Note that for $\lambda\neq0$ we always have $N(\T-\lambda)=N(\wh{\T}-\lambda)$.} 
$N(\T-\lambda)=\{0\}$, $\wh{\T-\lambda}=\T-\lambda$, and, by
Theorem \ref{theo:fatblem1}, $R(\T-\lambda)$ is closed and hence equals $\H$. As a consequence of Theorem \ref{theo:bddinv}, we get
$(\T-\lambda)^{-1}:\H\to D(\T)$
is bounded with 
$\bnorm{(\T-\lambda)^{-1}}_{\H\to\H}\leq c_{\T,\lambda}$.
We emphasise that indeed 
$x\coloneqq x_{R}+x_{N}=(\wh{\T}-\lambda)^{-1}f_{R}-\frac{1}{\lambda}f_{N}$
for $f\in\H$ solves $(\T-\lambda)x=f_{R}+f_{N}=f$.
\end{proof}

\begin{rem}
\label{rem:fatoolbox5}
The latter proof shows
$$(\T-\lambda)^{-1}=(\wh{\T}-\lambda)^{-1}\pi_{R(\T)}-\frac{1}{\lambda}\pi_{N(\T)}$$
with orthogonal projectors $\pi_{R(\T)}$ and $\pi_{N(\T)}$
onto the range and kernel of $\T$, respectively.
\end{rem}

The latter remark can be slightly extended to obtain the following statement 
about low frequency asymptotics, see, e.g., 
\cite{P2006a,P2008b,PO2020a,zbMATH04210630,zbMATH00147080,zbMATH00147438,zbMATH01089056,zbMATH03859617,zbMATH03985775} 
for results of this kind in the context of (acoustic, elastic, electro-magnetic) wave propagation phenomena.

\begin{theo}[low frequency asymptotics]
\label{theo:fatblem5}
Let $R(\T)$ be closed
and let $0<|\lambda|<1/c_{\T}$.
Then
\begin{align*}
(\wh{\T}-\lambda)^{-1}
&=\sum_{n=0}^{\infty}\lambda^{n}\wh{\T}^{-n-1},\\
(\T-\lambda)^{-1}
&=-\frac{1}{\lambda}\pi_{N(\T)}
+(\wh{\T}-\lambda)^{-1}\pi_{R(\T)}\\
&=-\frac{1}{\lambda}\pi_{N(\T)}
+\sum_{n=0}^{k-1}\lambda^{n}\wh{\T}^{-n-1}\pi_{R(\T)}
+\lambda^{k}\wh{\T}^{-k-1}\sum_{n=0}^{\infty}\lambda^{n}\wh{\T}^{-n}\pi_{R(\T)}
\end{align*}
and 
$$\bnorm{(\T-\lambda)^{-1}
+\frac{1}{\lambda}\pi_{N(\T)}
-\sum_{n=0}^{k-1}\lambda^{n}\wh{\T}^{-n-1}\pi_{R(\T)}}_{\H\to\H}
\leq\wh{c}_{\T,\lambda}c_{\T}^{k}|\lambda|^{k}
=\calO(\lambda^{k})\quad(\text{for }\lambda\to0).$$
\end{theo}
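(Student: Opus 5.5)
The plan is to argue entirely on the reduced level and then lift to $\T$ via Remark \ref{rem:fatoolbox5}. By Lemma \ref{lem:fatblem3}, $\wh{\T}^{-1}:R(\T)\to D(\wh{\T})\subset R(\T)$ is a bounded operator on the Hilbert space $R(\T)$ with norm $c_{\T}$. Since $|\lambda|c_{\T}<1$, the Neumann series $\sum_{n\ge0}\lambda^{n}\wh{\T}^{-n}$ converges in operator norm on $R(\T)$ to $(1-\lambda\wh{\T}^{-1})^{-1}$, with norm at most $(1-c_{\T}|\lambda|)^{-1}$. I then use the factorisation $\wh{\T}-\lambda=\wh{\T}(1-\lambda\wh{\T}^{-1})$ on $D(\wh{\T})$. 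This gives
$$(\wh{\T}-\lambda)^{-1}=(1-\lambda\wh{\T}^{-1})^{-1}\wh{\T}^{-1}=\sum_{n=0}^{\infty}\lambda^{n}\wh{\T}^{-n-1}.$$
To be careful about domains, I verify this directly. For $f\in R(\T)$ put $x\coloneqq\sum_{n\ge0}\lambda^{n}\wh{\T}^{-n-1}f$. The partial sums $x_{N}$ lie in $D(\wh{\T})$, and
$$\wh{\T}x_{N}=\sum_{n=0}^{N}\lambda^{n}\wh{\T}^{-n}f\to (1-\lambda\wh{\T}^{-1})^{-1}f.$$
Since $\wh{\T}$ is closed (it is a restriction of the closed $\T$ to a closed subspace), it follows that $x\in D(\wh{\T})$. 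A telescoping computation then gives $(\wh{\T}-\lambda)x=f$. Injectivity of $\wh{\T}-\lambda$ is already known from Lemma \ref{lem:fatblem3}, so this $x$ is indeed $(\wh{\T}-\lambda)^{-1}f$.

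The second identity is exactly Remark \ref{rem:fatoolbox5}. For the third, I split the Neumann series after $k$ terms and factor the tail as
$$\sum_{n\ge k}\lambda^{n}\wh{\T}^{-n-1}=\lambda^{k}\wh{\T}^{-k-1}\sum_{n\ge0}\lambda^{n}\wh{\T}^{-n},$$
composing everything with $\pi_{R(\T)}$.

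For the estimate, the operator inside the norm equals the tail applied to $\pi_{R(\T)}$. I would bound it in the form $\wh{\T}^{-k}\circ(\wh{\T}-\lambda)^{-1}$, using $\wh{\T}^{-1}(1-\lambda\wh{\T}^{-1})^{-1}=(\wh{\T}-\lambda)^{-1}$ and the fact that all these operators commute. This gives the bound
$$\|\wh{\T}^{-1}\|^{k}\,\|(\wh{\T}-\lambda)^{-1}\|\,\|\pi_{R(\T)}\|\le c_{\T}^{k}\,\wh{c}_{\T,\lambda}\,|\lambda|^{k},$$
using $\|\pi_{R(\T)}\|\le1$ and the bound $\|(\wh{\T}-\lambda)^{-1}\|\le\wh{c}_{\T,\lambda}$ from Lemma \ref{lem:fatblem3}. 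This is $\calO(\lambda^{k})$ as $\lambda\to0$.

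The only real subtlety I expect is the domain bookkeeping, namely that the series sum lies in $D(\wh{\T})$ and commutes appropriately. This is handled by the closedness argument above; everything else is routine Neumann series manipulation.
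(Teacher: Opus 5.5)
Your proposal is correct and follows essentially the paper's proof: the factorisation $\wh{\T}-\lambda=\wh{\T}(1-\lambda\wh{\T}^{-1})$ with a Neumann series on $R(\T)$, Remark \ref{rem:fatoolbox5} for the second identity, and splitting the series after $k$ terms. The differences are only cosmetic. You check explicitly that the series sum lies in $D(\wh{\T})$, which the paper leaves implicit. You also bound the tail by $|\lambda|^{k}\norm{\wh{\T}^{-1}}^{k}\norm{(\wh{\T}-\lambda)^{-1}}$ via Lemma \ref{lem:fatblem3}, whereas the paper uses $c_{\T}^{k+1}/(1-|\lambda|c_{\T})$; both give the same constant $\wh{c}_{\T,\lambda}c_{\T}^{k}$.
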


\begin{proof}
We observe
$(\wh{\T}-\lambda)=\wh{\T}\big(1-\lambda\wh{\T}^{-1}\big)$
and $\norm{\lambda\wh{\T}^{-1}}_{R(\T)\to R(\T)}=|\lambda|c_{\T}<1$.
Thus by Neumann's series
$$(\wh{\T}-\lambda)^{-1}
=\big(1-\lambda\wh{\T}^{-1}\big)^{-1}\wh{\T}^{-1}
=\sum_{n=0}^{\infty}\lambda^{n}\wh{\T}^{-n-1}
=\sum_{n=0}^{k-1}\lambda^{n}\wh{\T}^{-n-1}
+\lambda^{k}\wh{\T}^{-k-1}\sum_{n=0}^{\infty}\lambda^{n}\wh{\T}^{-n},$$
which, together with Remark \ref{rem:fatoolbox5}, shows the equations.
Moreover, 
$$\bnorm{\wh{\T}^{-k-1}\sum_{n=0}^{\infty}\lambda^{n}\wh{\T}^{-n}\pi_{R(\T)}}_{\H\to\H}
\leq c_{\T}^{k+1}\bnorm{\sum_{n=0}^{\infty}\lambda^{n}\wh{\T}^{-n}}_{R(\T)\to R(\T)}
\leq\frac{c_{\T}^{k+1}}{1-|\lambda|c_{\T}}
=\wh{c}_{\T,\lambda}c_{\T}^{k},$$
concludes the proof.
\end{proof}

\subsection{Exponential Stability}

We quickly introduce the operator-theoretic setting for space-time equations 
in the context of evolutionary equations introduced by Picard, \cite{zbMATH05599240}. 
We also refer to the monograph \cite{STW22} accessible for graduate students, and, particularly, 
we refer to \cite[Chapter 11]{STW22} on exponential stability. 
The core result to deduce exponential stability for (nonlinear, time-nonlocal) Maxwell's equations in \cite{DTW24} 
is an exponential stability statement for evolutionary equations that can be found in \cite[Corollary 11.6.1]{STW22}. 
In the following, we sketch the set-up and provide a corresponding stability result for Maxwell type equations. 
For this, we introduce for $\nu\in\reals$, the Hilbert space
$$\Lt_{\nu}(\reals;\H)\coloneqq
\Big\{f\in\Lt_{\textnormal{loc}}(\reals;\H):\int_{\reals}\bnorm{f(t)}_{\H}^{2}\exp(-2\nu t)\,dt<\infty\Big\},$$
endowed with the obvious scalar product. 
Introducing the (distributional, time-) derivative $\p_{t,\nu}$ 
on this space with maximal domain, 
the following result is a standard application of Picard's well-posedness theorem, 
where we understand that operators defined on $\H$ can be lifted canonically 
to operators on $\Lt_{\nu}(\reals;\H)$ retaining their properties 
(e.g., self-adjointness, positivity, etc.).
For ease of readability, we will re-use the notation of the original operator also for the lifted one.

\begin{theo}[{{Picard's theorem, \cite[Theorem 6.2.1]{STW22}}}]
\label{theo:pt} 
Let $0\leq\M_{0}=\M_{0}^{*},\M_{1}\in\L(\H)$, $\S$ 
be a skew-selfadjoint operator in $\H$. 
Then, if there exists $\nu_{0}\geq 0$ such that 
$$\forall\phi\in\H\qquad\nu_{0}\langle\M_{0}\phi,\phi\rangle_{\H} +\Re\langle\M_{1}\phi,\phi\rangle_{\H}\geq c\|\phi\|_{\H},$$
the operator
$$\wt{\B}_{\nu}\coloneqq (\p_{t,\nu}\M_{0}+\M_{1}+\S)$$
is closable in $\Lt_{\nu}(\reals;\H)$ for all $\nu\geq\nu_{0}$, the closure of which, 
$\B_{\nu}\coloneqq\overline{\wt{\B}_{\nu}}$, is continuously invertible. 
Moreover, for $\nu,\mu\geq\nu_{0}$ and $f\in \Lt_{\nu}(\reals;\H)\cap\Lt_\mu(\reals;\H)$, 
we have $\B_{\nu}^{-1}f =\B_{\mu}^{-1}f$.
\end{theo}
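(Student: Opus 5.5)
The plan is to proceed as in the standard proof of Picard's theorem via the Fourier--Laplace transform. The positivity condition is presumably meant with $\norm{\phi}_{\H}^{2}$ on the right, and $\nu_{0}$ is not used for larger $\nu$ as stated; so I will use the hypothesis in the form $\nu\scp{\M_{0}\phi}{\phi}+\Re\scp{\M_{1}\phi}{\phi}\geq c\norm{\phi}^{2}$ for all $\nu\geq\nu_{0}$. This follows from the assumed inequality because $\M_{0}\geq0$.

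\textbf{Step 1: strict positivity of $\wt{\B}_{\nu}$.} Let $\varphi$ be smooth, compactly supported in time, with values in $D(\S)$. Since $\S$ is skew-selfadjoint, $\Re\scp{\S\varphi}{\varphi}=0$ pointwise in $t$. For the time derivative, integrating by parts with the weight gives
\[
\Re\int\scp{\p_{t}\M_{0}\varphi}{\varphi}e^{-2\nu t}\,dt=\nu\int\scp{\M_{0}\varphi}{\varphi}e^{-2\nu t}\,dt,
\]
using $\M_{0}=\M_{0}^{*}$. Hence $\Re\scp{\wt{\B}_{\nu}\varphi}{\varphi}_{\Lt_{\nu}}\geq c\norm{\varphi}_{\Lt_{\nu}}^{2}$. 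The same computation applies to the formal adjoint $-\p_{t}\M_{0}+2\nu\M_{0}+\M_{1}^{*}-\S$ (the adjoint of $\p_{t,\nu}$ in $\Lt_{\nu}$ is $-\p_{t,\nu}+2\nu$), again with constant $c$.

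\textbf{Step 2: closability and injectivity of the closure.} The adjoint of $\wt{\B}_{\nu}$ contains the densely defined formal adjoint, so $\wt{\B}_{\nu}$ is closable. The positivity estimate passes to the closure $\B_{\nu}$ and to $\B_{\nu}^{*}$. This gives injectivity, closed range, and $\norm{\B_{\nu}^{-1}}\leq 1/c$.

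\textbf{Step 3: dense range.} $R(\B_{\nu})^{\bot}=N(\B_{\nu}^{*})=\{0\}$ by the positivity of the adjoint. This is the delicate point: I must make sure that $\B_{\nu}^{*}$ really is (contained in) the closure of the formal adjoint on the same core. To handle this I would instead use the unitary Fourier--Laplace transform $\mathcal{L}_{\nu}$. It turns $\p_{t,\nu}$ into multiplication by $i\xi+\nu$, so $\B_{\nu}$ is unitarily equivalent to the multiplication operator by $(i\xi+\nu)\M_{0}+\M_{1}+\S$. For each $\xi$ this is a closed operator in $\H$. Its real part is $\geq c$, because $\Re\scp{i\xi\M_{0}\phi}{\phi}=0$, and the same holds for its adjoint. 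So each fibre is boundedly invertible with norm $\leq1/c$, which yields surjectivity, and I expect this to be the main obstacle to carry out cleanly.

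\textbf{Step 4: causality and independence of $\nu$.} For $f\in\Lt_{\nu}\cap\Lt_{\mu}$, the transformed solutions come from the resolvent function $z\mapsto(z\M_{0}+\M_{1}+\S)^{-1}$. This function is analytic and bounded by $1/c$ on the half-plane $\Re z>\nu_{0}$. A Paley--Wiener argument, shifting the line of integration from $\Re z=\nu$ to $\Re z=\mu$, then shows $\B_{\nu}^{-1}f=\B_{\mu}^{-1}f$. Alternatively, one can first prove this for $f$ compactly supported and smooth, and then approximate $f$ by such functions, which are dense in the intersection space.
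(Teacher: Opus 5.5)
Your route is the standard one behind \cite[Theorem 6.2.1]{STW22}. The paper only cites that result and gives no proof of its own. In that route, accretivity of the fibre operators $T(\xi)\coloneqq(i\xi+\nu)\M_{0}+\M_{1}+\S$ and of their adjoints gives $\norm{T(\xi)^{-1}}\leq1/c$, and a Paley--Wiener contour shift gives the $\nu$-independence. Your reading of the hypothesis (with $c\norm{\phi}_{\H}^{2}$ and $c>0$) is the intended one.

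The gap is the assertion in Step 3 that $\B_{\nu}$ ``is unitarily equivalent to the multiplication operator'' with fibres $T(\xi)$. This is not automatic. The operator $\mathcal{L}_{\nu}\wt{\B}_{\nu}\mathcal{L}_{\nu}^{*}$ is the sum of the separately maximal multiplication operators by $(i\xi+\nu)\M_{0}$ and by $\S$ (plus $\M_{1}$), on the intersection of their domains. The maximal multiplication operator $\mathcal{T}$ with fibres $T(\xi)$ only asks for $\hat u(\xi)\in D(\S)$ a.e.\ and $T(\cdot)\hat u\in\Lt(\reals;\H)$, i.e., only the sum has to be square integrable. A priori the closure of the former could be strictly smaller than $\mathcal{T}$, and then surjectivity of $\mathcal{T}$ tells you nothing about $\B_{\nu}$. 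This is the same core problem that leaves Step 2 incomplete: you control the formal adjoint on test functions, not $\B_{\nu}^{*}$. Step 4 depends on it as well, since the contour shift needs $\B_{\nu}^{-1}=\mathcal{L}_{\nu}^{*}T(\cdot)^{-1}\mathcal{L}_{\nu}$.

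The missing idea is density of $R(\wt{\B}_{\nu})$.
First, $T(\xi)^{-1}-T(\eta)^{-1}=i(\eta-\xi)T(\xi)^{-1}\M_{0}T(\eta)^{-1}$ shows that $\xi\mapsto T(\xi)^{-1}$ is norm continuous and bounded by $1/c$. So $\hat f\mapsto T(\cdot)^{-1}\hat f(\cdot)$ is a bounded operator on $\Lt(\reals;\H)$ whose inverse is $\mathcal{T}$. Hence $\mathcal{T}$ is closed, $\wt{\B}_{\nu}$ is closable with $\mathcal{L}_{\nu}\B_{\nu}\mathcal{L}_{\nu}^{*}\subset\mathcal{T}$, and $\norm{\B_{\nu}u}\geq c\norm{u}$, so $R(\B_{\nu})$ is closed.
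Next, let $f\in D(\p_{t,\nu})$, i.e., $(1+|\xi|)\hat f(\xi)$ is square integrable, and put $\hat u(\xi)\coloneqq T(\xi)^{-1}\hat f(\xi)$. Then $(1+|\xi|)\hat u(\xi)$ is square integrable as well. So $(i\xi+\nu)\M_{0}\hat u$ and $\S\hat u=\hat f-(i\xi+\nu)\M_{0}\hat u-\M_{1}\hat u$ lie in $\Lt(\reals;\H)$. That is, $u\in D(\p_{t,\nu})\cap D(\S)\subset D(\wt{\B}_{\nu})$ with $\wt{\B}_{\nu}u=f$.
Since $D(\p_{t,\nu})$ is dense, $R(\B_{\nu})$ is closed and dense, hence $\B_{\nu}$ is onto and $\mathcal{L}_{\nu}\B_{\nu}\mathcal{L}_{\nu}^{*}=\mathcal{T}$.
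This makes Steps 1--2 superfluous and justifies Step 4. For the endpoint cases $\nu=\nu_{0}$ or $\mu=\nu_{0}$ of the contour shift, note that the bound $1/c$ holds up to the line $\Re z=\nu_{0}$. By the same resolvent identity, $z\mapsto(z\M_{0}+\M_{1}+\S)^{-1}$ is also continuous up to that line.
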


A particular application can be found in the following example of Maxwell type.

\begin{ex}[Maxwell type equations]
\label{ex:Mt} 
Let $\A\colon D(\A)\subset\H_{0}\to\H_{1}$ be closed and densely defined, 
$0<c\leq\varepsilon =\varepsilon^{*},\sigma\in\L(\H_{0})$, 
$0<c\leq\mu=\mu^{*}\in\L(\H_{1})$ in the sense of positive definiteness. 
Then
$$(\p_{t,\nu}\M_{0}+\M_{1}+\S)\coloneqq
\Big(
\p_{t,\nu}
\begin{bmatrix}
\varepsilon&0\\
0&\mu
\end{bmatrix}
+
\begin{bmatrix}
\sigma&0\\
0&0
\end{bmatrix}
+
\begin{bmatrix}
0&-\A^{*}\\
\A&0
\end{bmatrix}
\Big)$$
satisfies the assumptions of Theorem \ref{theo:pt}. 
Note the particular choice $\A=\rotc_{\om}$ for some open $\om\subset\reals ^3$.
\end{ex}

It has been found that evolutionary equations lead to a convenient framework 
to express results concerning exponential stability. 
In fact, an operator of the form of $\B_{\nu}$ as provided in Theorem \ref{theo:pt} is \textbf{exponentially stable}, 
if there exists $\eta>0$ such that for all 
$\nu\geq\nu_{0}$ 
and  $f\in \Lt_{\nu}(\reals;\H)\cap\Lt_{-\eta}(\reals;\H)$ we have
$$\B_{\nu} f\in\Lt_{-\eta}(\reals;\H).$$
The relationship of this notion of exponential stability to more classical formulations 
is expressed in \cite[Chapter 11]{STW22}. 
Next, as before, closed range results can help establish exponential stability statements. 
For simplicity, we focus on equations of the form provided in Example \ref{ex:Mt}.

\begin{theo}
\label{theo:expstMt} 
Let $\A\colon D(\A)\subset\H_{0}\to\H_{1}$ 
be closed and densely defined with closed range, and let $\varepsilon,\sigma,\mu>0$. 
Then, there exists $\eta>0$ such that for all $\nu>0$ 
and $f\in\Lt_{\nu}(\reals;\H_{0})\cap\Lt_{-\eta}(\reals;\H_{0})$ and 
$$U\coloneqq
\overline{\Big(\p_{t,\nu}
\begin{bmatrix}
\varepsilon&0\\
0&\mu
\end{bmatrix}
+
\begin{bmatrix}
\sigma &0\\
0&0
\end{bmatrix}
+
\begin{bmatrix}
0&-\A^{*}\\
\A&0
\end{bmatrix}
\Big)}^{-1}
\begin{bmatrix}
f\\
0
\end{bmatrix}$$
we have
$$U\in \Lt_{\nu}(\reals;\H)\cap\Lt_{-\eta}(\reals;\H).$$
\end{theo}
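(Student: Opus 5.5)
Since $R(\A)$ is closed, Theorem \ref{theo:bcrt} gives that $R(\A^{*})$ is closed as well, so both reduced operators $\wh{\A}$ and $\wh{\A^{*}}$ are boundedly invertible by Theorem \ref{theo:bddinv}. I will use the exponential stability criterion \cite[Corollary 11.6.1]{STW22} (the one used in \cite{DTW24}). It asks for an $\eta>0$ such that the resolvent-type operator
$$z\M_{0}+\M_{1}+\S,\qquad \Re z>-\eta,$$
is boundedly invertible with a bound that is uniform in $z$. Here $\M_{0}=\operatorname{diag}(\varepsilon,\mu)$, $\M_{1}=\operatorname{diag}(\sigma,0)$ and $\S=\T_{3}$. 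Since $\varepsilon,\sigma,\mu$ are positive constants (scalars), I can rescale $\H_{0}$ and $\H_{1}$ and absorb $\varepsilon,\mu$ into $\A$. After this, $\M_{0}=1$ and the problem reduces to $z+\operatorname{diag}(\sigma,0)+\S$. The task is therefore to show that $z+\operatorname{diag}(\sigma,0)+\S$ has a uniformly bounded inverse on a half plane $\Re z>-\eta$.

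To do this I decompose $\H_{0}=\ol{R(\A^{*})}\oplus N(\A)$ and $\H_{1}=\ol{R(\A)}\oplus N(\A^{*})$, where the ranges are closed. Both $\S$ and the damping respect this splitting.

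On the kernel part $N(\A)\times N(\A^{*})$ the equation decouples into $(z+\sigma)x_{0}=f_{0}$ and $zx_{1}=0$. The first is harmless for $\Re z>-\sigma$. The second component stays zero because the right-hand side $\begin{bmatrix}f\\0\end{bmatrix}$ has vanishing second entry; this is exactly why the theorem only forces the first component.

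On the reduced part $R(\A^{*})\times R(\A)$ I have to invert
$$\begin{bmatrix} z+\sigma & -\wh{\A}^{*}\\ \wh{\A} & z\end{bmatrix}.$$
For $\Re z\ge0$ the operator is accretive, with real part at least $\Re z$, and I also need a uniform bound as $\Re z\downarrow0$. Testing with $(x,y)$ gives
$$\Re z\,(\norm{x}^{2}+\norm{y}^{2})+\sigma\norm{x}^{2}\le \norm{F}\,\norm{(x,y)},$$
which controls $x$. To control $y$, I use the closed range of $\A$: from the second row, $\wh{\A}x=g-zy$, and from the first row, $\wh{\A}^{*}y=(z+\sigma)x-f$. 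Hence
$$\norm{y}\le c_{\A}\norm{\wh{\A}^{*}y}\le c_{\A}\big(|z+\sigma|\norm{x}+\norm{f}\big).$$

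The main obstacle is large $|\operatorname{Im}z|$, where $|z|\norm{x}$ is not controlled. There I argue differently: $\Re\langle\cdot,\cdot\rangle$ gives $\sigma\norm{x}^{2}\le\norm{F}\norm{(x,y)}$, and $\norm{y}\le|z|^{-1}(\norm{g}+\norm{\wh{\A}x})$. I then combine this with the estimate $\norm{\wh{\A}^{*}y}\lesssim|z|\norm{x}+\norm{f}$ through an interpolation in the pairing $\langle\wh{\A}^{*}y,x\rangle$. Concretely, I take the inner product of the first row with $\wh{\A}^{-1}(zy)$ or a similar auxiliary vector, in the standard way used for damped wave systems, to obtain a bound that is uniform for $\Re z\ge0$.

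Once uniform invertibility is known on the closed half plane $\Re z\ge0$, a Neumann series perturbation, $z-\eta$ against $z$, extends it to $\Re z>-\eta$ for small $\eta>0$. The cited corollary then yields $U\in\Lt_{-\eta}$, and causality (the independence of $\nu$ in Theorem \ref{theo:pt}) gives $U\in\Lt_{\nu}$.
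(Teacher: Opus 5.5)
Your proposal is correct, and its skeleton matches the paper's. You rescale to $\varepsilon=\mu=1$ and split $\H_{0}\times\H_{1}=\big(R(\A^{*})\times R(\A)\big)\oplus\big(N(\A)\times N(\A^{*})\big)$. You use that for scalar $\sigma$ both the damping and $\S$ respect this splitting. You then treat the kernel part directly: $U_{0,N}$ solves a scalar damped ODE, and $U_{1,N}=0$ because its datum vanishes.

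The real difference is in the reduced part. The paper notes that it is of the form treated in \cite[Section~11.4]{STW22} and uses \cite[Corollary~11.6.1]{STW22} as a black box. You instead prove the underlying uniform resolvent bound for $z+\Sigma+\wh{\S}$ on $\Re z>0$ yourself, in three pieces:
\begin{itemize}
\item the energy identity;
\item the closed-range estimate $\norm{y}\le c_{\A}\norm{\wh{\A}^{*}y}$ for bounded $|z|$;
\item a multiplier for large $|z|$.
\end{itemize}
Existence for $\Re z>0$ comes from m-accretivity of $\Sigma+\wh{\S}$, so only the a priori bound is needed. You then shift to $\Re z>-\eta$ by a Neumann series and conclude with the Paley--Wiener type resolvent criterion. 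Your route is self-contained, shows exactly where the closed range enters, and gives $\eta$ explicitly in terms of $\sigma$ and $\norm{\wh{\A}^{-1}}$. The paper's route is shorter but delegates precisely this step to the reference.

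Your large-$|z|$ step is only sketched, but the multiplier you name does close it. The bound via $\norm{\wh{\A}x}$ that you mention first is a dead end. Instead, with $w\coloneqq\wh{\A}^{-1}(zy)=\wh{\A}^{-1}g-x$, pairing the first row with $w$ gives
$$\bar{z}\norm{y}^{2}=(z+\sigma)\scp{x}{w}-\scp{f}{w}.$$
Hence for $|z|\geq1$ one gets $\norm{y}^{2}\lesssim\norm{x}^{2}+\norm{F}^{2}$. Together with $\sigma\norm{x}^{2}\leq\norm{F}\norm{(x,y)}$, this yields $\norm{(x,y)}\lesssim\norm{F}$ uniformly.

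One statement in your first paragraph needs correcting. The full operator $z+\operatorname{diag}(\sigma,0)+\S$ does \emph{not} have a uniformly bounded inverse on a half plane containing $z=0$ once $N(\A^{*})\neq\{0\}$, as happens for $\A=\rotc$. It is not injective at $z=0$, and for $z\neq0$ its inverse acts as $z^{-1}$ on $\{0\}\times N(\A^{*})$. So the stability criterion must be applied to the reduced system, with the kernel part handled separately, or to the solution map restricted to data $(f,0)$. That is what your decoupling actually does, but it is not what your opening reduction says.
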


\begin{proof}
Before we proceed proving the actual result, 
we may assume without loss of generality, 
that $\varepsilon=1$ and $\mu=1$. 
Indeed, multiplying 
$$\overline{\Big(\p_{t,\nu}
\begin{bmatrix}
\varepsilon&0\\
0&\mu
\end{bmatrix}
+
\begin{bmatrix}
\sigma&0\\
0&0
\end{bmatrix}
+
\begin{bmatrix}
0&-\A^{*}\\
\A&0
\end{bmatrix}
\Big)}$$
from the left and the right with 
$\begin{bmatrix}
\varepsilon^{-1/2}&0\\
0&{\mu}^{-1/2}
\end{bmatrix}$ 
we obtain
$$\overline{\Big(\p_{t,\nu}
\begin{bmatrix}
1&0\\
0&1
\end{bmatrix}
+
\begin{bmatrix}
\wt{\sigma}&0\\
0&0
\end{bmatrix}
+
\begin{bmatrix}
0&-{\wt{\A}}^{*}\\
\wt{\A}&0
\end{bmatrix}\Big)},$$
where $\wt{\sigma}=\varepsilon^{-1/2}\sigma\varepsilon^{-1/2}$
and $\wt{\A}=\mu^{-1/2}\A\varepsilon^{-1/2}$.
Note that $\A$ has closed range, if and only if $\wt{\A}$ has; 
moreover $\wt{\sigma}=\sigma/\varepsilon>0$. 
Henceforth, we drop \,$\wt{\cdot}$\, again in our notation. 
Considering the abstract Helmholtz decomposition into 
$$\H_{0}\times\H_{1}
=R(\S)\oplus N(\S)
=\big(R(\A^{*})\times R(\A)\big)\oplus\big(N(\A)\times N(\A^{*})\big)$$
with 
$\S=
\begin{bmatrix}
0&-\A^{*}\\
\A&0
\end{bmatrix}$, 
we may rewrite the operator in question (similarly to the case for the low-frequency asymptotics) as
$$\overline{\Big(\p_{t,\nu}
\begin{bmatrix}
1&0\\
0&1
\end{bmatrix}
+\Sigma+
\begin{bmatrix}
\wh{S}&0\\
0&0
\end{bmatrix}\Big)},$$
where
$$\Sigma=
\begin{bmatrix}
\Sigma_{11}&0\\
0&\Sigma_{22}
\end{bmatrix},
\qquad
\Sigma_{11}=
\begin{bmatrix}
\pi_{R(\A^{*})}\sigma\pi_{R(\A^{*})}&0\\
0&0
\end{bmatrix},
\qquad
\Sigma_{22}=
\begin{bmatrix}
\pi_{N(\A)}\sigma\pi_{N(\A)}&0\\
0&0
\end{bmatrix}.$$
Hence, the equation in question decouples into the following two equations
$$\overline{(\p_{t,\nu}+\Sigma_{11}+\hat{S})}
\begin{bmatrix}
U_{0,R}\\
U_{1,R}
\end{bmatrix}
=
\begin{bmatrix}
f_R\\
0
\end{bmatrix},
\qquad
\overline{(\p_{t,\nu}+\Sigma_{22})}
\begin{bmatrix}
U_{0,N}\\
U_{1,N}
\end{bmatrix}
=
\begin{bmatrix}
f_N\\
0
\end{bmatrix},$$
where we use 
\begin{align*}
f=f_R+f_N&\in\Lt_{\nu}\big(\reals; R(\A^{*})\big)+\Lt_{\nu}\big(\reals; N(\A)),\\
U=U_{0}+U_{1}&\in\Lt_{\nu}(\reals;\H_{0})+\Lt_{\nu}(\reals;\H_{1}),\\
U_{0}=U_{0,R}+U_{0,N}&\in\Lt_{\nu}\big(\reals; R(\A^{*})\big)+\Lt_{\nu}\big(\reals; N(\A)\big),\\
U_{1}=U_{1,R}+U_{1,N}&\in\Lt_{\nu}\big(\reals; R(\A)\big)+\Lt_{\nu}\big(\reals; N(\A^{*})\big).
\end{align*}
The first equation is exactly of the form treated in \cite[Section 11.4]{STW22} and, thus, 
\cite[Corollary 11.6.1]{STW22} yields the claim for $(U_{0,R}$, $U_{1,R})$. 
Uniqueness of the solution leads to $U_{1,N}=0$ and $U_{0,N}$ 
satisfies the claimed asymptotics using the variation of constants formula for ODEs.
\end{proof}

With little more effort, 
the previous results can also be transferred to variable coefficients $\varepsilon,\mu,\sigma$. 
We refrain from following this path. 
Instead, we now turn to the announced rationale proving Gaffney's inequality in standard and, due to unboundedness, 
in less standard situations. 
The first line of questions revolves around integration by parts on smooth domains and cuboids.

\section{Integration by Parts}
\label{sec:ibp}

Let $\om\subset\rt$ be a bounded Lipschitz domain
with boundary $\ga=\p\om$ and outer unit normal field $\nu$.
For $E\in\Ci(\rt)$ we define its normal component on $\ga$ (a.e.) by 
$$E_{\nor}\coloneqq\nu\cdot E,$$
and its corresponding tangential component on $\ga$ (a.e.) by 
$$E_{\tan}\coloneqq E-E_{\nor}\nu=\nu\times E\times\nu.$$
For simplicity, here, we consider only real-valued vector fields.

For $E,F\in\Ci(\rt)$ with matrix fields $\na E,\na F\in\Ci(\rt)$ we have \emph{point-wise}
$$2\rot E\cdot\rot F
=\big(\na E-(\na E)^{\top}\big)\cdot\big(\na F-(\na F)^{\top}\big)
=2\na E\cdot\na F
-2\na E\cdot(\na F)^{\top},$$
where the dots stand for the standard scalar product for vectors 
and the Forbenius scalar product for matrices, respectively. 
Hence we observe the (point-wise) key relation
$$\rot E\cdot\rot F
+\div E\cdot\div F
=\na E\cdot\na F
-\sum_{k,l=1}^{3}
\Big(\p_{l}E_{k}\cdot\p_{k}F_{l}-\p_{k}E_{k}\cdot\p_{l}F_{l}\Big).$$

Integration over $\om$, Gau{\ss}' theorem, and Schwarz' lemma yield
\begin{align}
\begin{aligned}
\label{eq:ibp1}
&\qquad
\int_{\om}\na E\cdot\na F
-\int_{\om}\rot E\cdot\rot F
-\int_{\om}\div E\cdot\div F\\
&=\sum_{k,l=1}^{3}
\int_{\om}\Big(\p_{l}E_{k}\p_{k}F_{l}-\p_{k}E_{k}\p_{l}F_{l}\Big)\\
&=\sum_{k,l=1}^{3}
\int_{\ga}\Big(\nu_{l}E_{k}\p_{k}F_{l}-\nu_{k}E_{k}\p_{l}F_{l}\Big)
=\int_{\ga}\Big((\p_{E}F)_{\nor}-E_{\nor}\div F\Big)
=:\cI_{\ga}(E,F).
\end{aligned}
\end{align}

So, it remains to investigate the last two boundary integrals,
which we shall do in the following two subsections for smooth domains $\om$
and for the special case of the unit cube $\om=Q=(0,1)^{3}$, separately.

\subsection{Smooth Domains}
\label{sec:ibpsd}

Let $\om\subset\rt$ be a bounded and smooth domain,
e.g., $\om$ is of class $\Ci$.
Then the unit normal field can be extended into a neighbourhood of $\ga$
such that the resulting vector field (still denoted by) $\nu$ satisfies in this neighbourhood
$|\nu|^{2}=1$, $0=\p_{k}|\nu|^{2}=2\nu\cdot\p_{k}\nu$, and $\rot\nu=0$, i.e.,
\begin{align}
\label{eq:normal}
|\nu|=1,
\qquad
(\na\nu)\nu=0,
\qquad
\rot\nu=0,
\end{align}
cf.~\cite{G1985a,ABDG1998a,C1991a}. 
In fact, this property can be achieved by studying the signed distance function and compute the gradient of which. 
Then locally around the boundary, the unit outward normal becomes a gradient, 
particularly satisfying the last requirement, see \cite{zbMATH08072166} for a recent reference.
Note that $\na\nu$ is the (symmetric) second fundamental form
and that $-\frac{1}{2}\div\nu$ is the mean curvature of $\ga$.
Hence, both, $\na\nu$ and $\div\nu$, are non-negative for convex domains $\om$.

We modify $\cI_{\ga}(E,F)$ in \eqref{eq:ibp1}, still for $E,F\in\Ci(\rt)$, by
\begin{align}
\begin{aligned}
\label{eq:ibpIga}
\cI_{\ga}(E,F)
&\;=\sum_{k,l=1}^{3}
\int_{\ga}\Big(
E_{k}\p_{k}(\nu_{l}F_{l})
-E_{k}(\p_{k}\nu_{l})F_{l}
-\nu_{k}E_{k}\p_{l}F_{l}
\Big)\\
&=\int_{\ga}\Big(
E\cdot\na F_{\nor}
-E\cdot((\na\nu) F)
-E_{\nor}\div F
\Big)\\
&=\int_{\ga}\Big(
E_{\tan}\cdot\na_{\tan}F_{\nor}
-E_{\nor}\div F_{\tan}
-E_{\nor}F_{\nor}\div\nu
-E_{\tan}\cdot(\na\nu F_{\tan})
\Big),
\end{aligned}
\end{align}
as we have, recalling that $E_{\nor}$ is a scalar and $E_{\tan}$ a vector field introduced at the beginning of this section,
\begin{align*}
E\cdot\na F_{\nor}
&=E_{\tan}\cdot\na F_{\nor}
+E_{\nor}\,\nu\cdot\na F_{\nor}
=E_{\tan}\cdot\na_{\tan}F_{\nor}
+E_{\nor}\,\nu\cdot\na F_{\nor},\\
E_{\nor}\div F
&=E_{\nor}\div F_{\tan}
+E_{\nor}F_{\nor}\div\nu
+E_{\nor}\,\nu\cdot\na F_{\nor},\\
E\cdot(\na\nu F)
&=E_{\tan}\cdot(\na\nu\,F_{\tan})
+F_{\nor}E_{\tan}\cdot(\na\nu\,\nu)
+E_{\nor}\nu\cdot(\na\nu\,F_{\tan})
+E_{\nor}F_{\nor}\nu\cdot(\na\nu\,\nu)
=E_{\tan}\cdot(\na\nu\,F_{\tan}),
\end{align*}
since $\na\nu\,\nu=0$.
Here, the surface gradient $\na_{\tan}$ is defined by $\na_{\tan}u\coloneqq (\na u)_{\tan}$.
Moreover, with $\rot\nu=0$ we see
\begin{align}
\label{eq:surfdiv}
\div F_{\tan}
=-\nu\cdot\rot(F\times\nu).
\end{align}
Let $\varphi\in\Ci(\rt)$ be supported in a small neighbourhood of $\ga$
with $\varphi=1$ in an even smaller neighbourhood. 
Then we obtain by \eqref{eq:surfdiv} and Gau{\ss}' theorem
\begin{align}
\begin{aligned}
\label{eq:ibp2}
\int_{\om}\na(\varphi E_{\nor})\cdot\rot(\varphi F\times\nu)
&=\int_{\om}\div\big(\varphi E_{\nor}\rot(\varphi F\times\nu)\big)\\
&=\int_{\ga}\nu\cdot\big(\varphi E_{\nor}\rot(\varphi F\times\nu)\big)
=-\int_{\ga}E_{\nor}\div F_{\tan},\\
\int_{\om}\rot(\varphi E\times\nu)\cdot\na(\varphi F_{\nor})
&=\int_{\om}\div\big((\varphi E\times\nu)\times\na(\varphi F_{\nor})\big)\\
&=\int_{\ga}\nu\cdot\big((\varphi E\times\nu)\times\na(\varphi F_{\nor})\big)
=\int_{\ga}\na F_{\nor}\cdot E_{\tan}
=\int_{\ga}E_{\tan}\cdot\na_{\tan}F_{\nor}.
\end{aligned}
\end{align}
Finally, we plug \eqref{eq:ibp2} into \eqref{eq:ibpIga}
and arrive at:

\begin{lem}[integration by parts for smooth domains]
\label{lem:ibpsmooth}
Let $\om$ be bounded and smooth
and let $E,F\in\H^{1}(\om)$. 
Then
$$\scp{\na E}{\na F}_{\Lt(\om)}
=\scp{\rot E}{\rot F}_{\Lt(\om)}
+\scp{\div E}{\div F}_{\Lt(\om)}
+\cI_{\ga}(E,F),$$
where $\cI_{\ga}$ is a boundary integral term given by
\begin{align*}
\cI_{\ga}(E,F)
&=\bscp{\rot(\varphi E\times\nu)}{\na(\varphi F_{\nor})}_{\Lt(\om)}
+\bscp{\na(\varphi E_{\nor})}{\rot(\varphi F\times\nu)}_{\Lt(\om)}\\
&\qquad\qquad
-\bscp{E_{\nor}}{(\div\nu)F_{\nor}}_{\Lt(\ga)}
-\bscp{E_{\tan}}{(\na\nu)F_{\tan}}_{\Lt(\ga)}.
\end{align*}
In particular, 
\begin{align*}
\norm{\na E}_{\Lt(\om)}^{2}
&=\norm{\rot E}_{\Lt(\om)}^{2}
+\norm{\div E}_{\Lt(\om)}^{2}
+\cI_{\ga}(E,E),\\
\cI_{\ga}(E,E)
&=2\bscp{\na(\varphi E_{\nor})}{\rot(\varphi E\times\nu)}_{\Lt(\om)}
-\bscp{E_{\nor}}{(\div\nu)E_{\nor}}_{\Lt(\ga)}
-\bscp{E_{\tan}}{(\na\nu)E_{\tan}}_{\Lt(\ga)}.
\end{align*}
\end{lem}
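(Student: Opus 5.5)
The plan is to first establish the identity for smooth fields $E,F\in\Ci(\rt)$ (restricted to $\om$) and then pass to $E,F\in\H^{1}(\om)$ by density. For smooth fields, most of the work has already been done in \eqref{eq:ibp1}, \eqref{eq:ibpIga} and \eqref{eq:ibp2}.

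For smooth $E,F$ we start from the pointwise key relation. We integrate it over $\om$ and apply Gau{\ss}' theorem to obtain \eqref{eq:ibp1}, so that $\cI_{\ga}(E,F)=\int_{\ga}\big((\p_{E}F)_{\nor}-E_{\nor}\div F\big)$. We then rewrite this using the extended normal field satisfying \eqref{eq:normal}. This is exactly the computation \eqref{eq:ibpIga}, which uses $(\na\nu)\nu=0$ and the splitting of $E$ and $F$ into tangential and normal parts on $\ga$.

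The two remaining boundary terms $\int_{\ga}E_{\tan}\cdot\na_{\tan}F_{\nor}$ and $-\int_{\ga}E_{\nor}\div F_{\tan}$ are converted back into volume integrals via \eqref{eq:ibp2}. For this step we need $\varphi=1$ near $\ga$, so that $\varphi$ and its derivatives do not contribute on $\ga$. We also need \eqref{eq:surfdiv}, which follows from $\rot\nu=0$ and the identity $\div(F\times\nu)$-type calculus, and $\div\rot=0$ for the first identity in \eqref{eq:ibp2}. For the second identity we use $\div(A\times\na u)=\rot A\cdot\na u$ together with $\nu\cdot((E\times\nu)\times\na F_{\nor})=E_{\tan}\cdot\na F_{\nor}$. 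Combining these gives the claimed formula for smooth fields. The special case $F=E$ follows immediately, since the two volume terms coincide after using symmetry of the $\Lt$ inner product.

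For the extension to $\H^{1}(\om)$ we use that $\Ci(\rt)|_{\om}$ is dense in $\H^{1}(\om)$ for bounded smooth (indeed Lipschitz) $\om$. Every term is continuous in the $\H^{1}(\om)$-topology:
\begin{itemize}
\item the left-hand side and the $\rot$ and $\div$ terms trivially;
\item the boundary terms $\scp{E_{\nor}}{(\div\nu)F_{\nor}}_{\Lt(\ga)}$ and $\scp{E_{\tan}}{(\na\nu)F_{\tan}}_{\Lt(\ga)}$ by continuity of the trace $\H^{1}(\om)\to\Lt(\ga)$ and boundedness of $\div\nu$ and $\na\nu$ on $\ga$;
\item the volume terms such as $\scp{\rot(\varphi E\times\nu)}{\na(\varphi F_{\nor})}_{\Lt(\om)}$, because $\varphi\nu$ is smooth and bounded with bounded derivatives on $\supp\varphi$, so $E\mapsto\varphi E\times\nu$ and $F\mapsto\varphi F_{\nor}$ are bounded maps $\H^{1}(\om)\to\H^{1}(\om)$.
\end{itemize}
Passing to the limit therefore yields the lemma.

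The main obstacle is the differential-geometric input: the existence of an extension $\nu$ in a neighbourhood of $\ga$ with $|\nu|=1$, $(\na\nu)\nu=0$ and $\rot\nu=0$. I would obtain it as the gradient of the signed distance function, which is smooth near a smooth boundary, and take this as given from the cited references. With this in hand, the only remaining care is checking the vector identities in \eqref{eq:surfdiv} and \eqref{eq:ibp2}.
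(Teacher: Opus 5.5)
Your proposal is correct and follows the paper's proof. The identity is first established for $E,F\in\Ci(\rt)$ via \eqref{eq:ibp1}, \eqref{eq:ibpIga} and \eqref{eq:ibp2}, and then extended to $\H^{1}(\om)$ by density and continuity of the trace $\H^{1}(\om)\to\Lt(\ga)$; your explicit check that the $\varphi$-volume terms are $\H^{1}$-continuous is a detail the paper leaves implicit.
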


\begin{proof}
For $E,F\in\Ci(\rt)$ the assertions follow by the previous considerations and computations.
By approximation, i.e., $\ol{\Ci(\rt)\cap\H^{1}(\om)}^{\H^{1}(\om)}=\H^{1}(\om)$,
the results carry over to $E,F\in\H^{1}(\om)$. 
For this note that, as $\om$ is smooth, 
the mapping $\H^{1}(\om)\ni E\mapsto E|_{\ga}\in\Lt(\ga)$ 
is well-defined and continuous.
\end{proof}

\begin{rem}[integration by parts on the boundary]
\label{rem:ibpsmooth}
For $E,F\in\Ci(\om)$ we have by \eqref{eq:ibp2}
the following integration by parts formula on the boundary
$$\scp{\na_{\tan}E_{\nor}}{F_{\tan}}_{\Lt(\ga)}
=\bscp{\na(\varphi E_{\nor})}{\rot(\varphi F\times\nu)}_{\Lt(\om)}
=-\scp{E_{\nor}}{\div F_{\tan}}_{\Lt(\ga)}.$$
For $E,F\in\H^{1}(\om)$ this formula remains valid in the sense of traces for
the respective Sobolev spaces $\H^{1}(\om)$, $\H(\div,\om)$, and $\H(\rot,\om)$.
More precisely, we see by the complex properties
that $\na(\varphi E_{\nor}),\,\varphi F\times\nu\in\H(\rot,\om)$
as well as 
$\varphi E_{\nor}\in\H^{1}(\om)$ and $\rot(\varphi F\times\nu)\in\H(\div,\om)$.
Hence 
$$\bdual{\trt\na(\varphi E_{\nor})}{\trtx(\varphi F\times\nu)}_{\ga}
=\bscp{\na(\varphi E_{\nor})}{\rot(\varphi F\times\nu)}_{\Lt(\om)}
=\bdual{\trs(\varphi E_{\nor})}{\trn\rot(\varphi F\times\nu)}_{\ga},$$
where $\dual{\cdot}{\cdot}_{\ga}$ denotes (roughly) 
the duality in the respective $\H^{\pm1/2}(\ga)$ trace spaces 
without going into details.
Here, $\trs$, $\trn$, $\trt$, $\trtx$
denote the scalar, normal, and tangential, twisted tangential traces, respectively.
We emphasise that modifying (or even identifying) the terms 
$\div F_{\tan}$ and $\trn\rot(\varphi F\times\nu)$
to the proper surface divergence $\div_{\tan}$
requires some additional efforts, 
which are not relevant for our needs.
\end{rem}

\begin{cor}[integration by parts for smooth domains and homogeneous boundary conditions]
\label{cor:ibpsmooth}
Let $\om$ be bounded and smooth
and let $E,F\in\H^{1}(\om)$. 
If $E,F\in\Hc(\rot,\om)$, then
\begin{align*}
\scp{\na E}{\na F}_{\Lt(\om)}
&=\scp{\rot E}{\rot F}_{\Lt(\om)}
+\scp{\div E}{\div F}_{\Lt(\om)}
-\bscp{E_{\nor}}{(\div\nu)F_{\nor}}_{\Lt(\ga)}.
\intertext{If $E,F\in\Hc(\div,\om)$, then}
\scp{\na E}{\na F}_{\Lt(\om)}
&=\scp{\rot E}{\rot F}_{\Lt(\om)}
+\scp{\div E}{\div F}_{\Lt(\om)}
-\bscp{E_{\tan}}{(\na\nu)F_{\tan}}_{\Lt(\ga)}.
\intertext{In particular, if $E\in\Hc(\rot,\om)$, then}
\norm{\na E}_{\Lt(\om)}^{2}
&=\norm{\rot E}_{\Lt(\om)}^{2}
+\norm{\div E}_{\Lt(\om)}^{2}
-\int_{\ga}\div\nu\,|E_{\nor}|^{2},
\intertext{and, if $E\in\Hc(\div,\om)$, then}
\norm{\na E}_{\Lt(\om)}^{2}
&=\norm{\rot E}_{\Lt(\om)}^{2}
+\norm{\div E}_{\Lt(\om)}^{2}
-\int_{\ga}E_{\tan}\cdot(\na\nu E_{\tan}).
\end{align*}
\end{cor}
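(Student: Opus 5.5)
The plan is to start from Lemma \ref{lem:ibpsmooth}, applied to $E,F\in\H^{1}(\om)$. It expresses $\cI_{\ga}(E,F)$ as the sum of two volume cross terms,
$$\bscp{\rot(\varphi E\times\nu)}{\na(\varphi F_{\nor})}_{\Lt(\om)}
\quad\text{and}\quad
\bscp{\na(\varphi E_{\nor})}{\rot(\varphi F\times\nu)}_{\Lt(\om)},$$
together with the two curvature boundary integrals. The task is then to show that, under either boundary condition, exactly one of the two curvature integrals survives and both cross terms vanish.

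\textbf{Case $E,F\in\Hc(\rot,\om)$.} First I will check that $\varphi E\times\nu$ and $\varphi F\times\nu$ lie in $\Hc(\rot,\om)$. I intend to approximate $E$ in the $\H^{1}(\om)$-norm by smooth fields with vanishing tangential trace. If that is awkward, I will instead use the identity $\varphi E\times\nu=0$ on $\ga$ in the trace sense: since $E\in\H^{1}(\om)$, the trace $E|_{\ga}\in\Lt(\ga)$ exists and its tangential part vanishes, and $\varphi\nu$ is smooth. Given this, and since $\varphi F_{\nor},\varphi E_{\nor}\in\H^{1}(\om)$, the complex property applies: $\rotc$ annihilates what $\na$ produces in the adjoint pairing. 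Concretely,
$$\bscp{\rotc(\varphi E\times\nu)}{\na(\varphi F_{\nor})}_{\Lt(\om)}=\bscp{\varphi E\times\nu}{\rot\na(\varphi F_{\nor})}_{\Lt(\om)}=0,$$
and the second cross term vanishes symmetrically. On $\ga$ we have $E_{\tan}=0$, so the term $\scp{E_{\tan}}{(\na\nu)F_{\tan}}_{\Lt(\ga)}$ drops out and only $-\scp{E_{\nor}}{(\div\nu)F_{\nor}}_{\Lt(\ga)}$ remains.

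\textbf{Case $E,F\in\Hc(\div,\om)$.} The argument is dual. The normal traces vanish, so $\varphi E_{\nor},\varphi F_{\nor}\in\Hc^{1}(\om)$ (being $\H^{1}$ with zero trace on the smooth boundary). Then
$$\bscp{\rot(\varphi E\times\nu)}{\nac(\varphi F_{\nor})}_{\Lt(\om)}=-\bscp{\div\rot(\varphi E\times\nu)}{\varphi F_{\nor}}_{\Lt(\om)}=0,$$
and likewise for the other cross term. The $E_{\nor}$ boundary term vanishes and the $(\na\nu)$ term remains. The "in particular" statements follow by setting $F=E$.

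The main technical point is justifying the boundary-condition memberships, namely that an $\H^{1}\cap\Hc(\rot)$ field has zero tangential trace in $\Lt(\ga)$ and an $\H^{1}\cap\Hc(\div)$ field has zero normal trace. I would handle this by matching the $\H^{1}$ trace with the weak characterisation obtained by integration by parts against $\Ci$ fields. Alternatively, the cancellations can be read off directly from Remark \ref{rem:ibpsmooth}, via $\trt\na(\cdot)$ and $\trtx(\varphi F\times\nu)=0$.
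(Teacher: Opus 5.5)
Your proposal is correct and follows the paper's proof. Both start from Lemma \ref{lem:ibpsmooth}, show $\varphi E\times\nu\in\Hc(\rot,\om)$ (resp.\ $\varphi E_{\nor}\in\Hc^{1}(\om)$), and kill the cross terms via $\na(\varphi F_{\nor})\in N(\rot)$ (resp.\ $\rot(\varphi F\times\nu)\in N(\div)$). Both drop the curvature term removed by $E_{\tan}=0$ (resp.\ $E_{\nor}=0$); the paper checks these memberships by Gau{\ss}' theorem against $\Psi\in\Ci(\rt)$, which is essentially your fallback trace argument.
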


\begin{proof}
For $E\in\H^{1}(\om)\cap\Hc(\rot,\om)$
we have $\varphi E\times\nu\in\H^{1}(\om)$
and $E_{\tan}=0$ on $\ga$.
Moreover, for all $\Psi\in\Ci(\rt)$ 
we compute by Gau{\ss}' theorem
\begin{align*}
\scp{\varphi E\times\nu}{\rot\Psi}_{\Lt(\om)}
&=\bscp{\rot(\varphi E\times\nu)}{\Psi}_{\Lt(\om)}
-\int_{\om}\div(\varphi E\times\nu\times\Psi),\\
\int_{\om}\div(\varphi E\times\nu\times\Psi)
&=\int_{\ga}\nu\cdot(\varphi E\times\nu\times\Psi)
=\int_{\ga}(\nu\times E\times\nu)\cdot\Psi
=\int_{\ga}E_{\tan}\cdot\Psi
=0,
\end{align*}
which shows $\varphi E\times\nu\in\Hc(\rot,\om)$.
Hence, as $\na(\varphi F_{\nor})\in N(\rot_{\om})\subset\H(\rot,\om)$,
$$\bscp{\rot(\varphi E\times\nu)}{\na(\varphi F_{\nor})}_{\Lt(\om)}=0,$$
and Lemma \ref{lem:ibpsmooth} shows the first assertion for 
$E,F\in\H^{1}(\om)\cap\Hc(\rot,\om)$.

For $E\in\H^{1}(\om)\cap\Hc(\div,\om)$
we have $\varphi E_{\nor}\in\H^{1}(\om)$
and $E_{\nor}=0$ on $\ga$.
Moreover, for all $\Psi\in\Ci(\rt)$ 
we compute by Gau{\ss}' theorem
\begin{align*}
\scp{\varphi E_{\nor}}{\div\Psi}_{\Lt(\om)}
&=-\bscp{\na(\varphi E_{\nor})}{\Psi}_{\Lt(\om)}
+\int_{\om}\div(\varphi E_{\nor}\cdot\Psi),\\
\int_{\om}\div(\varphi E_{\nor}\cdot\Psi)
&=\int_{\ga}\nu\cdot(\varphi E_{\nor}\cdot\Psi)
=\int_{\ga}E_{\nor}\Psi_{\nor}
=0,
\end{align*}
which shows $\varphi E_{\nor}\in\Hc^{1}(\om)$.
Hence, as $\rot(\varphi F\times\nu)\in N(\div_{\om})\subset\H(\div,\om)$,
$$\bscp{\na(\varphi E_{\nor})}{\rot(\varphi F\times\nu)}_{\Lt(\om)}=0,$$
and Lemma \ref{lem:ibpsmooth} shows the second assertion for 
$E,F\in\H^{1}(\om)\cap\Hc(\div,\om)$.
\end{proof}

In the remaining part of this section, 
we focus on the implication of the presented integration by parts formula for convex geometries.

\begin{cor}[integration by parts for smooth convex domains and homogeneous boundary conditions]
\label{cor:ibpsmoothconvex}
Let $\om$ be bounded, smooth, and convex.
Then:
$$\forall\;E\in\H^{1}(\om)\cap\big(\Hc(\rot,\om)\cup\Hc(\div,\om)\big)
\qquad
\norm{\na E}_{\Lt(\om)}^{2}
\leq\norm{\rot E}_{\Lt(\om)}^{2}
+\norm{\div E}_{\Lt(\om)}^{2}$$
\end{cor}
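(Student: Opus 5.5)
The plan is to read the claimed inequality off the two identities in Corollary \ref{cor:ibpsmooth}, using convexity to fix the sign of the boundary terms. Let $E\in\H^{1}(\om)$. If in addition $E\in\Hc(\rot,\om)$, Corollary \ref{cor:ibpsmooth} gives
$$\norm{\na E}_{\Lt(\om)}^{2}
=\norm{\rot E}_{\Lt(\om)}^{2}
+\norm{\div E}_{\Lt(\om)}^{2}
-\int_{\ga}\div\nu\,|E_{\nor}|^{2}.$$
If instead $E\in\Hc(\div,\om)$, the boundary term is $-\int_{\ga}E_{\tan}\cdot(\na\nu\,E_{\tan})$. So it suffices to show that both boundary integrals are non-negative.

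For this I will use the geometric facts recalled in Section \ref{sec:ibpsd}. The extension of $\nu$ is the gradient of the signed distance function $d$ near $\ga$, so $\na\nu$ is the Hessian of $d$. On $\ga$ it annihilates $\nu$, because $(\na\nu)\nu=0$, and on tangent vectors it acts as the second fundamental form (Weingarten map) of $\ga$. For a convex smooth domain this form is positive semidefinite, so $\xi\cdot(\na\nu\,\xi)\geq0$ for every $\xi$. Its trace $\div\nu$, the sum of the principal curvatures, is therefore also non-negative.

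One way to justify positive semidefiniteness is the following. For convex $\om$ the signed distance $d$ (positive outside) is a convex function on $\rt$: outside $\om$ it equals the distance to a convex set, which is convex, and inside it equals $-\dist(\cdot,\ga)$, which is also convex. Hence its Hessian $\na\nu$ is positive semidefinite wherever $d$ is smooth, in particular at points of $\ga$. Pointwise on $\ga$ this gives $\div\nu\,|E_{\nor}|^{2}\geq0$ and $E_{\tan}\cdot(\na\nu\,E_{\tan})\geq0$, and dropping these non-negative terms yields the asserted estimate in both cases.

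The main obstacle is making this sign claim rigorous. The paper only remarks that $\na\nu$ and $\div\nu$ are non-negative for convex domains, so I would spell out the convexity argument for $d$ above, or alternatively use a local graph representation of $\ga$ with a concave graph function, whose negative Hessian gives the curvature. All other steps are direct applications of Corollary \ref{cor:ibpsmooth}. That corollary is stated for $E\in\H^{1}(\om)$ with traces in $\Lt(\ga)$ via the smooth trace map, so the boundary integrals are well defined and no further approximation is needed.
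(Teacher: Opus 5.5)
Your proposal is correct and is essentially the paper's proof. The paper also just drops the boundary terms in Corollary~\ref{cor:ibpsmooth}, using that $\na\nu$ and $\div\nu$ are non-negative for convex $\om$, which it takes as known from Section~\ref{sec:ibpsd}. Your added signed-distance justification is sound, with one small repair: convexity of $d$ separately inside and outside $\om$ does not by itself give global convexity, but either the fact that $d$ is a supremum of affine functions or continuity of the Hessian up to $\ga$ from inside $\om$ suffices.
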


\begin{proof}
As $\om$ is convex, $\na\nu$ and $\div\nu$ are non-negative,
as mentioned in the beginning of Section \ref{sec:ibpsd}.
Corollary \ref{cor:ibpsmooth} shows the assertion.
\end{proof}

\begin{ex}[unit ball]
\label{sec:unitball}
Let $\om=B_{3}\coloneqq B(0,1)\subset\rt$ be the Euclidean unit ball with boundary $\ga=S_{2}$.
Then for $x\neq0$
\begin{align*}
\nu(x)
&=\frac{x}{|x|},
&
\na\nu(x)
&=\frac{1}{|x|^{3}}
\begin{bmatrix}
x_{2}^{2}+x_{3}^{2}&-x_{1}x_{2}&-x_{1}x_{3}\\
-x_{2}x_{1}&x_{1}^{2}+x_{3}^{2}&-x_{2}x_{3}\\
-x_{3}x_{1}&-x_{3}x_{2}&x_{1}^{2}+x_{2}^{2}
\end{bmatrix}
\geq0,\\
\rot\nu(x)
&=0,
&
\div\nu(x)
&=\tr\na\nu(x)
=\frac{2}{|x|}
>0.
\end{align*}
Hence Corollary \ref{cor:ibpsmooth}, cf.~\eqref{eq:ibp1}, \eqref{eq:ibpIga}, and Lemma \ref{lem:ibpsmooth},
shows, e.g., for $E\in\H^{1}(B_{3})\cap\Hc(\rot,B_{3})$
\begin{align*}
\norm{\na E}_{\Lt(B_{3})}^{2}
&=\norm{\rot E}_{\Lt(B_{3})}^{2}
+\norm{\div E}_{\Lt(B_{3})}^{2}
-2\int_{S_{2}}|E_{\nor}|^{2}.
\end{align*}
Note that, e.g., for $E=\id$ we get
$3|B_{3}|=9|B_{3}|-2|S_{2}|$, i.e., the well-known result $|S_{2}|=3|B_{3}|$.
\end{ex}

\subsection{The Unit Cube}
\label{sec:ibpuc}

Finally, we address the prototype example for geometries with non-curved faces. 
The example deals with the geometric setting and the subsequent lemma proves 
the integration by parts formula in this geometry. 
The remarkable fact is that no curvature term appears and, thus, the Gaffney estimate becomes a mere equality.

Let $\om=Q\coloneqq (0,1)^{3}\subset\rt$ be the unit cube with boundary and faces
$$\ga=\bigcup_{k=1}^{3}\big(\ga_{k,+}\cup\ga_{k,-}\big),
\qquad
\ga_{k,\pm}\coloneqq\{x\in\ol{Q}:2x_{k}=1\pm1\},$$
and (almost everywhere defined) outward unit normal $\nu$ given by 
$$\nu|_{\ga_{k,\pm}}=:\nu^{k,\pm}=\pm e^{k}.$$
Note that $\nu_{l}|_{\ga_{k,\pm}}=\nu^{k,\pm}_{l}=\pm\delta_{lk}$.
Then for smooth vector fields $E$
the tangential and normal components, e.g., on $\ga_{3,\pm}$ are simply
$$E_{\nor}=\pm e^{3}\cdot E=\pm E_{3},
\quad
E_{\tan}=E-E_{3}=\tvec{E_{\parallel}}{0},
\qquad
E_{\parallel}\coloneqq\tvec{E_{1}}{E_{2}},
\quad
E_{\bot}\coloneqq E_{\nor}.$$
Hence, together with the surface gradient and divergence, 
we have on $\ga_{k,\pm}$ 
\begin{align*}
E_{\parallel}&\coloneqq E_{\parallel,k}\coloneqq\tvec{E_{n}}{E_{m}},
&
\div_{\parallel}E_{\parallel}&\coloneqq\div_{\parallel,k}E_{\parallel,k}\coloneqq\p_{n}E_{n}+\p_{m}E_{m},\\
E_{\bot}&\coloneqq E_{\bot,k}\coloneqq\pm E_{k},
&
\na_{\bot}E_{\bot}&\coloneqq\na_{\bot,k}E_{\bot,k}\coloneqq\tvec{\p_{n}E_{\bot}}{\p_{m}E_{\bot}}
=\pm\tvec{\p_{n}E_{k}}{\p_{m}E_{k}}
\end{align*}
for $\{k,n,m\}=\{1,2,3\}$, $n<m$.


\begin{lem}[integration by parts for the unit cube]
\label{lem:ibpcube}
Let $E,F\in\Ci(\rt)$. Then
$$\scp{\na E}{\na F}_{\Lt(Q)}
=\scp{\rot E}{\rot F}_{\Lt(Q)}
+\scp{\div E}{\div F}_{\Lt(Q)}
+\wt{\cI}_{\ga}(E,F),$$
where $\wt{\cI}_{\ga}$ is a boundary integral given by
$$\wt{\cI}_{\ga}(E,F)
\coloneqq\scp{E_{\parallel}}{\na_{\bot}F_{\bot}}_{\Lt(\ga)}
-\scp{E_{\bot}}{\div_{\parallel}F_{\parallel}}_{\Lt(\ga)}.$$
\end{lem}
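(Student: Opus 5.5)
Since $Q$ is a bounded Lipschitz domain, the general identity \eqref{eq:ibp1} is available for $E,F\in\Ci(\rt)$. It gives
$$\scp{\na E}{\na F}_{\Lt(Q)}-\scp{\rot E}{\rot F}_{\Lt(Q)}-\scp{\div E}{\div F}_{\Lt(Q)}
=\cI_{\ga}(E,F)=\sum_{k,l=1}^{3}\int_{\ga}\Big(\nu_{l}E_{k}\p_{k}F_{l}-\nu_{k}E_{k}\p_{l}F_{l}\Big).$$
So it suffices to show $\cI_{\ga}(E,F)=\wt{\cI}_{\ga}(E,F)$. I plan to do this face by face, using $\nu_{l}=\pm\delta_{lj}$ on $\ga_{j,\pm}$. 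Because the normal is piecewise constant, I will not need the extension of $\nu$ or the cutoff $\varphi$ from Section \ref{sec:ibpsd}; there are no curvature terms.

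Fix a face $\ga_{j,\pm}$ and let $\{j,n,m\}=\{1,2,3\}$ with $n<m$. On this face the integrand becomes
$$\pm\sum_{k}E_{k}\p_{k}F_{j}\mp E_{j}\sum_{l}\p_{l}F_{l}.$$
In the first sum, the term $k=j$ is $\pm E_{j}\p_{j}F_{j}$. In the second, the term $l=j$ is $\mp E_{j}\p_{j}F_{j}$. These cancel, and this cancellation is where the normal derivative disappears. What remains is
$$\pm\big(E_{n}\p_{n}F_{j}+E_{m}\p_{m}F_{j}\big)\mp E_{j}\big(\p_{n}F_{n}+\p_{m}F_{m}\big).$$
Using the face notation $E_{\parallel}=[E_{n}\;E_{m}]^{\top}$, $E_{\bot}=\pm E_{j}$ and $\na_{\bot}F_{\bot}=\pm[\p_{n}F_{j}\;\p_{m}F_{j}]^{\top}$, this reads $E_{\parallel}\cdot\na_{\bot}F_{\bot}-E_{\bot}\div_{\parallel}F_{\parallel}$. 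The signs check out: $\pm$ times $\pm$ gives $+$ in the first term, and the sign of $E_{\bot}$ absorbs the $\mp$ in the second.

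Summing over all six faces gives exactly $\wt{\cI}_{\ga}(E,F)$, with $\Lt(\ga)$ understood as the sum of the face integrals. This is also how $\nu$ and the face quantities are defined almost everywhere on $\ga$.

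The only point I expect to need care is the application of Gau{\ss}' theorem on the nonsmooth cube. For smooth fields on $\ol{Q}$ this is elementary, via Fubini and the fundamental theorem of calculus in each coordinate direction. The edges and corners have surface measure zero, so they contribute nothing, and no approximation argument is needed.
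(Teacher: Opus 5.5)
Your proof is correct and follows the paper's route: both start from \eqref{eq:ibp1}, use $\nu=\pm e^{j}$ on $\ga_{j,\pm}$, discard the diagonal terms $\nu_{k}E_{k}\p_{k}F_{k}-\nu_{k}E_{k}\p_{k}F_{k}=0$, and regroup the remaining tangential terms on each face into $E_{\parallel}\cdot\na_{\bot}F_{\bot}-E_{\bot}\div_{\parallel}F_{\parallel}$. The only difference is the order of bookkeeping: the paper drops all $k=l$ terms before splitting into faces, whereas you split into faces first and then cancel the $k=l=j$ term on each face.
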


\begin{proof}
By \eqref{eq:ibp1} we just have to compute 
\begin{align*}
\cI_{\ga}(E,F)
&=\sum_{k,l=1}^{3}
\int_{\ga}\Big(\nu_{l}E_{k}\p_{k}F_{l}-\nu_{k}E_{k}\p_{l}F_{l}\Big)\\
&\;=\sum_{k=1}^{3}\sum_{k\neq l=1}^{3}\sum_{j=1}^{3}
\int_{\ga_{j,\pm}}\Big(\nu^{j,\pm}_{l}E_{k}\p_{k}F_{l}-\nu^{j,\pm}_{k}E_{k}\p_{l}F_{l}\Big)\\
&\;=\sum_{k=1}^{3}\sum_{k\neq l=1}^{3}
\Big(\pm\int_{\ga_{l,\pm}}E_{k}\p_{k}F_{l}
\mp\int_{\ga_{k,\pm}}E_{k}\p_{l}F_{l}\Big)\\
&\;=\pm\scp{E_{1}}{\p_{1}F_{2}}_{\Lt(\ga_{2,\pm})}\pm\scp{E_{1}}{\p_{1}F_{3}}_{\Lt(\ga_{3,\pm})}
\mp\scp{E_{1}}{\p_{2}F_{2}+\p_{3}F_{3}}_{\Lt(\ga_{1,\pm})}\\
&\qquad
\pm\scp{E_{2}}{\p_{2}F_{1}}_{\Lt(\ga_{1,\pm})}\pm\scp{E_{2}}{\p_{2}F_{3}}_{\Lt(\ga_{3,\pm})}
\mp\scp{E_{2}}{\p_{1}F_{1}+\p_{3}F_{3}}_{\Lt(\ga_{2,\pm})}\\
&\qquad\qquad
\pm\scp{E_{3}}{\p_{3}F_{1}}_{\Lt(\ga_{1,\pm})}\pm\scp{E_{3}}{\p_{3}F_{2}}_{\Lt(\ga_{2,\pm})}
\mp\scp{E_{3}}{\p_{1}F_{1}+\p_{2}F_{2}}_{\Lt(\ga_{3,\pm})}\\
&\;=\pm\Bscp{\tvec{E_{2}}{E_{3}}}{\na_{2,3}F_{1}}_{\Lt(\ga_{1,\pm})}
\mp\Bscp{E_{1}}{\div_{2,3}\tvec{F_{2}}{F_{3}}}_{\Lt(\ga_{1,\pm})}\\
&\qquad
\pm\Bscp{\tvec{E_{1}}{E_{3}}}{\na_{1,3}F_{2}}_{\Lt(\ga_{2,\pm})}
\mp\Bscp{E_{2}}{\div_{1,3}\tvec{F_{1}}{F_{3}}}_{\Lt(\ga_{2,\pm})}\\
&\qquad\qquad
\pm\Bscp{\tvec{E_{1}}{E_{2}}}{\na_{1,2}F_{3}}_{\Lt(\ga_{3,\pm})}
\mp\Bscp{E_{3}}{\div_{1,2}\tvec{F_{1}}{F_{2}}}_{\Lt(\ga_{3,\pm})}\\
&=\sum_{k=1}^{3}
\Big(\scp{E_{\parallel}}{\na_{\bot}F_{\bot}}_{\Lt(\ga_{k,\pm})}
-\scp{E_{\bot}}{\div_{\parallel}F_{\parallel}}_{\Lt(\ga_{k,\pm})}\Big)
=\wt{\cI}_{\ga}(E,F),
\end{align*}
completing the proof.
\end{proof}

\section{Regularity and Gaffney's Inequality for Convex Domains}
\label{sec:gaffneyineq}

The following result is rooted in \cite[Lemma 2.1]{S1982a} apparently  due to discussions with Rolf Leis. 
It has also been used in higher-dimensional situations, see \cite[Lemma 3.9]{W2018}.

\begin{lem}
\label{lem:densityJukka}
Let $\H_{0}$, $\H_{1}$, and $\H_{2}$ be Hilbert spaces.
Moreover, let $\A_{0}:D(\A_{0})\subset\H_{0}\to\H_{1}$
and $\A_{1}:D(\A_{1})\subset\H_{1}\to\H_{2}$
be two densely defined and closed linear operators
satisfying the complex property $R(\A_{0})\subset N(\A_{1})$.
Let
$$\P_{\A_{0}}\coloneqq\id-\A_{0}(\A_{0}^{*}\A_{0}+1)^{-1}\A_{0}^{*}:D(\A_{0}^{*})\to D(\A_{0}^{*})$$
and $\cD\subset D_{1,0}\coloneqq D(\A_{1})\cap D(\A_{0}^{*})$.
Then:
\begin{itemize}
\item[\bf(i)] 
$\P_{\A_{0}}(D_{1,0})\subset D_{1,0}$,
i.e., $D_{1,0}$ is invariant under $\P_{\A_{0}}$.
\item[\bf(ii)] 
If $\cD$ is dense in $D(\A_{1})$, 
then $\P_{\A_{0}}(\cD)$ is dense in $D_{1,0}$.
\item[\bf(iii)] 
For $y\in D_{1,0}$, we have
$\norm{\P_{\A_{0}}y}_{D_{1,0}}\leq\norm{y}_{D(\A_{1})}$.
\item[\bf(iii')] 
If $D_{1,0}\ni y_{n}\to y\in D_{1,0}$ in $D(\A_{1})$,
then $D_{1,0}\ni\P_{\A_{0}}y_{n}\to\P_{\A_{0}}y\in D_{1,0}$ in $D_{1,0}$.
\end{itemize}
\end{lem}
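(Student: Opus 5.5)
The plan is to make the map $\P_{\A_{0}}$ explicit through the auxiliary element $z_{y}\coloneqq(\A_{0}^{*}\A_{0}+1)^{-1}\A_{0}^{*}y$. For $y\in D(\A_{0}^{*})$ we have $z_{y}\in D(\A_{0}^{*}\A_{0})$, so $\A_{0}z_{y}\in D(\A_{0}^{*})$ and $\P_{\A_{0}}y=y-\A_{0}z_{y}$. Applying $\A_{0}^{*}$ and using $\A_{0}^{*}\A_{0}z_{y}=\A_{0}^{*}y-z_{y}$ gives
$$\A_{0}^{*}\P_{\A_{0}}y=z_{y}\in D(\A_{0}).$$
In particular $\P_{\A_{0}}$ maps $D(\A_{0}^{*})$ into itself.

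\textbf{Proof of (i).} By the complex property, $\A_{0}z_{y}\in R(\A_{0})\subset N(\A_{1})$. Hence for $y\in D_{1,0}$ we get $\P_{\A_{0}}y\in D(\A_{1})$ with $\A_{1}\P_{\A_{0}}y=\A_{1}y$, which is (i).

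\textbf{Proof of (iii).} Since $\A_{1}\P_{\A_{0}}y=\A_{1}y$ and $\A_{0}^{*}\P_{\A_{0}}y=z_{y}$, it suffices to show $\norm{\P_{\A_{0}}y}^{2}+\norm{z_{y}}^{2}\leq\norm{y}^{2}$. Expand $\norm{y}^{2}=\norm{\P_{\A_{0}}y+\A_{0}z_{y}}^{2}$. The cross term is $\scp{\P_{\A_{0}}y}{\A_{0}z_{y}}=\scp{\A_{0}^{*}\P_{\A_{0}}y}{z_{y}}=\norm{z_{y}}^{2}$, which is real. This yields
$$\norm{y}^{2}=\norm{\P_{\A_{0}}y}^{2}+2\norm{z_{y}}^{2}+\norm{\A_{0}z_{y}}^{2}\geq\norm{\P_{\A_{0}}y}^{2}+\norm{z_{y}}^{2},$$
and (iii) follows.

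\textbf{Proof of (iii').} $\P_{\A_{0}}$ is linear on $D_{1,0}$, so (iii) applied to $y_{n}-y\in D_{1,0}$ gives $\norm{\P_{\A_{0}}y_{n}-\P_{\A_{0}}y}_{D_{1,0}}\leq\norm{y_{n}-y}_{D(\A_{1})}\to0$.

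\textbf{Proof of (ii).} Here I expect the main difficulty, and I split it into two steps.

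First, by (iii') the closure of $\P_{\A_{0}}(\cD)$ in $D_{1,0}$ contains $\P_{\A_{0}}(D_{1,0})$. Indeed, any $y\in D_{1,0}$ is a $D(\A_{1})$-limit of elements of $\cD\subset D_{1,0}$, since $\cD$ is dense in $D(\A_{1})$.

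Second, it remains to show that $\P_{\A_{0}}(D_{1,0})$ is dense in $D_{1,0}$. I would first identify a large subset of this range. Let $x\in D_{1,0}$ with $\A_{0}^{*}x\in D(\A_{0}^{*}\A_{0})$, and set $y\coloneqq x+\A_{0}\A_{0}^{*}x$. Then $y\in D(\A_{0}^{*})$ and $\A_{0}^{*}y=(1+\A_{0}^{*}\A_{0})\A_{0}^{*}x$, so $z_{y}=\A_{0}^{*}x$ and $\P_{\A_{0}}y=x$. Moreover $y\in D(\A_{1})$ because $\A_{0}\A_{0}^{*}x\in N(\A_{1})$. Thus every such $x$ lies in $\P_{\A_{0}}(D_{1,0})$.

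To approximate a general $x\in D_{1,0}$ by such elements, I would use the rescaled maps
$$x_{\eps}\coloneqq x-\A_{0}(\A_{0}^{*}\A_{0}+\eps^{-1})^{-1}\A_{0}^{*}x,\qquad\eps\to0.$$
Repeating the computation above gives $x_{\eps}\in D_{1,0}$ with $\A_{1}x_{\eps}=\A_{1}x$ and $\A_{0}^{*}x_{\eps}=\eps^{-1}(\A_{0}^{*}\A_{0}+\eps^{-1})^{-1}\A_{0}^{*}x$. The latter lies in $D(\A_{0}^{*}\A_{0})$, so $x_{\eps}\in\P_{\A_{0}}(D_{1,0})$ by the previous paragraph.

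Convergence $x_{\eps}\to x$ in $D_{1,0}$ is the delicate point. I would check it by spectral calculus for the self-adjoint operators $\A_{0}^{*}\A_{0}$ and $\A_{0}\A_{0}^{*}$. The term $\A_{0}^{*}x_{\eps}=(1+\eps\A_{0}^{*}\A_{0})^{-1}\A_{0}^{*}x$ converges to $\A_{0}^{*}x$ by strong convergence of the resolvent. For the other term, use the intertwining relation $\A_{0}(\A_{0}^{*}\A_{0}+\eps^{-1})^{-1}\A_{0}^{*}\subset\A_{0}\A_{0}^{*}(\A_{0}\A_{0}^{*}+\eps^{-1})^{-1}=\eps\A_{0}\A_{0}^{*}(1+\eps\A_{0}\A_{0}^{*})^{-1}$. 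The right-hand side is a family of operators bounded by $1$ that tends strongly to $0$ on $\ol{R(\A_{0})}$. It also vanishes on $N(\A_{0}^{*})$, hence everywhere. Therefore $x-x_{\eps}\to0$ in $\H_{1}$, while $\A_{1}x_{\eps}=\A_{1}x$ holds for every $\eps$. This gives $x_{\eps}\to x$ in $D_{1,0}$ and completes (ii).
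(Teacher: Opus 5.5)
Your proof is correct. Parts (i) and (iii') coincide with the paper's, but for (ii), and in a milder way for (iii), you take a different route.

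The paper starts from exactly your two computations, $\A_{1}\P_{\A_{0}}y=\A_{1}y$ and $\A_{0}^{*}\P_{\A_{0}}y=z_{y}$. From them it extracts the single identity $\scp{z}{\P_{\A_{0}}y}_{D_{1,0}}=\scp{z}{y}_{D(\A_{1})}$ for all $y,z\in D_{1,0}$, using $\scp{\A_{0}^{*}z}{z_{y}}_{\H_{0}}=\scp{z}{\A_{0}z_{y}}_{\H_{1}}=\scp{z}{(1-\P_{\A_{0}})y}_{\H_{1}}$. With this identity, (ii) becomes a two-line orthogonality argument. If $z\in D_{1,0}$ is $D_{1,0}$-orthogonal to $\P_{\A_{0}}(\cD)$, then it is $D(\A_{1})$-orthogonal to the dense set $\cD$, so $z=0$. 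Part (iii) follows from Cauchy--Schwarz with $z=\P_{\A_{0}}y$. Put differently, on $D_{1,0}$ the map $\P_{\A_{0}}$ equals $\iota^{*}\iota$ for the injective inclusion $\iota\colon D_{1,0}\to D(\A_{1})$, so density of its range is automatic.

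You instead proceed in three steps, all of which check out:
\begin{itemize}
\item You reduce (ii) to density of $\P_{\A_{0}}(D_{1,0})$.
\item You exhibit explicit preimages: $\P_{\A_{0}}$ is inverted by $x\mapsto x+\A_{0}\A_{0}^{*}x$ whenever $\A_{0}^{*}x\in D(\A_{0}^{*}\A_{0})$.
\item You approximate a general $x$ by Yosida-type regularisations $x_{\eps}$. This needs the intertwining relation and strong resolvent convergence from the spectral theorem, and you handle both correctly.
\end{itemize}
Your route is constructive and yields more information. It gives explicit approximants, and in (iii) the exact identity $\norm{y}^{2}=\norm{\P_{\A_{0}}y}^{2}+2\norm{z_{y}}^{2}+\norm{\A_{0}z_{y}}^{2}$. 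The paper's identity, by contrast, makes (ii) essentially free and avoids functional calculus entirely.
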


Here, $D(\A_{1})$, $D(\A_{0}^{*})$, and $D_{1,0}$ are endowed with the graph inner products
\begin{align*}
\scp{\,\cdot\,}{\,\cdot\,}_{D(\A_{1})}
&\coloneqq\scp{\,\cdot\,}{\,\cdot\,}_{\H_{1}}
+\scp{\A_{1}\,\cdot\,}{\A_{1}\,\cdot\,}_{\H_{2}},\\
\scp{\,\cdot\,}{\,\cdot\,}_{D(\A_{0}^{*})}
&\coloneqq\scp{\,\cdot\,}{\,\cdot\,}_{\H_{1}}
+\scp{\A_{0}^{*}\,\cdot\,}{\A_{0}^{*}\,\cdot\,}_{\H_{0}},\\
\scp{\,\cdot\,}{\,\cdot\,}_{D_{1,0}}
&\coloneqq\scp{\,\cdot\,}{\,\cdot\,}_{\H_{1}}
+\scp{\A_{1}\,\cdot\,}{\A_{1}\,\cdot\,}_{\H_{2}}
+\scp{\A_{0}^{*}\,\cdot\,}{\A_{0}^{*}\,\cdot\,}_{\H_{0}},
\end{align*}
and the Hilbert space adjoints are given by 
$\A_{0}^{*}:D(\A_{0}^{*})\subset\H_{1}\to\H_{0}$
and $\A_{1}^{*}:D(\A_{1}^{*})\subset\H_{2}\to\H_{1}$.

\begin{proof}
Note that by the Riesz' representation theorem 
$\A_{0}^{*}\A_{0}+1:D(\A_{0}^{*}\A_{0})\to\H_{0}$ is a topological isomorphism.
Hence $(\A_{0}^{*}\A_{0}+1)^{-1}\big(R(\A_{0}^{*})\big)\subset D(\A_{0}^{*}\A_{0})$ and 
$$\A_{0}(\A_{0}^{*}\A_{0}+1)^{-1}\big(R(\A_{0}^{*})\big)\subset D(\A_{0}^{*})\cap N(\A_{1})$$
by the complex property.
Thus, $\P_{\A_{0}}(D_{1,0})\subset D_{1,0}$, 
showing (i).

Before we turn to the actual proof of (ii), 
we establish the following equality first:
\begin{equation}
\label{eq:proofJukkacomp}
\forall y,z\in D_{1,0}\,
\qquad
\scp{z}{\P_{\A_{0}}y}_{D_{1,0}}=\scp{z}{y}_{D(\A_{1})}.
\end{equation} 
Indeed, let $y,z\in D_{1,0}$ and put 
$\P_{\A_{0}}y=y-\A_{0}(\A_{0}^{*}\A_{0}+1)^{-1}\A_{0}^{*}y\in D_{1,0}$. 
Then
\begin{align*}
\A_{1}\P_{\A_{0}}y
&=\A_{1}y,\\
\A_{0}^{*}\P_{\A_{0}}y
&=\A_{0}^{*}y-\A_{0}^{*}\A_{0}(\A_{0}^{*}\A_{0}+1)^{-1}\A_{0}^{*}y
=(\A_{0}^{*}\A_{0}+1)^{-1}\A_{0}^{*}y,\\
\scp{\A_{0}^{*}z}{\A_{0}^{*}\P_{\A_{0}}y}_{\H_{0}}
&=\bscp{z}{\A_{0}(\A_{0}^{*}\A_{0}+1)^{-1}\A_{0}^{*}y}_{\H_{1}}
=\bscp{z}{(1-\P_{\A_{0}})y}_{\H_{1}},
\intertext{and it follows that}
\scp{z}{\P_{\A_{0}}y}_{D_{1,0}}
&=\scp{z}{\P_{\A_{0}}y}_{\H_{1}}
+\scp{\A_{1}z}{\A_{1}\P_{\A_{0}}y}_{\H_{2}}
+\scp{\A_{0}^{*}z}{\A_{0}^{*}\P_{\A_{0}}y}_{\H_{0}}\\
&=\scp{z}{y}_{\H_{1}}
+\scp{\A_{1}z}{\A_{1}y}_{\H_{2}}
=\scp{z}{y}_{D(\A_{1})}.
\end{align*}

For the proof of (ii), let $\cD$ be dense in $D(\A_{1})$ 
and take $z\in D_{1,0}\cap\big(\P_{\A_{0}}(\cD)\big)^{\bot_{D_{1,0}}}$.
Then, for all $\P_{\A_{0}}y\in\P_{\A_{0}}(\cD)\subset D_{1,0}$
with $y\in\cD$, using \eqref{eq:proofJukkacomp}, we get
$$0=\scp{z}{\P_{\A_{0}}y}_{D_{1,0}}=\scp{z}{y}_{D(\A_{1})},$$
and, as $\cD$ is dense in $D(\A_{1})$, we conclude $z=0$.

For (iii) and (iii'), let $y\in D_{1,0}$.
Then by \eqref{eq:proofJukkacomp} with $z=\P_{\A_{0}}y$
$$\norm{\P_{\A_{0}}y}_{D_{1,0}}^{2}
=\scp{\P_{\A_{0}}y}{y}_{D(\A_{1})}
\leq\norm{\P_{\A_{0}}y}_{D(\A_{1})}\norm{y}_{D(\A_{1})}
\leq\norm{\P_{\A_{0}}y}_{D_{1,0}}\norm{y}_{D(\A_{1})},$$
i.e., $\norm{\P_{\A_{0}}y}_{D_{1,0}}\leq\norm{y}_{D(\A_{1})}$.
\end{proof}

\subsection{Bounded Domains}
\label{sec:gaffneybdom}

The latter density result may now be used to prove Gaffney's inequality in the smooth bounded domain case.

\begin{lem}[Gaffney's inequality for bounded, smooth, and convex domains]
\label{lem:gaffneyconvexsmooth}
Let $\om$ be bounded, smooth, and convex.
Then $\om$ is a Gaffney domain.

More precisely: 
If $E\in\Hc(\rot,\om)\cap\H(\div,\om)$
or $E\in\H(\rot,\om)\cap\Hc(\div,\om)$,
then $E\in\H^{1}(\om)$ and 
$$\norm{\na E}_{\Lt(\om)}^{2}
\leq\norm{\rot E}_{\Lt(\om)}^{2}
+\norm{\div E}_{\Lt(\om)}^{2}.$$
\end{lem}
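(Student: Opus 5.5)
Take $E\in\Hc(\rot,\om)\cap\H(\div,\om)$; the other case is symmetric, with the roles of $\rot$ and $\div$ and of the two boundary conditions interchanged. The plan is to apply Lemma \ref{lem:densityJukka} with
$$\A_{0}\coloneqq\nac,\qquad \A_{1}\coloneqq\rotc,\qquad\text{so that}\qquad \A_{0}^{*}=-\div,\qquad D_{1,0}=\Hc(\rot,\om)\cap\H(\div,\om).$$
In the other case we take $\A_{0}\coloneqq\rot$, $\A_{1}\coloneqq\divc$, giving $\A_{0}^{*}=\rotc$ and $D_{1,0}=\Hc(\rot,\om)^{\perp\text{-swap}}$, i.e.\ $D_{1,0}=\H(\rot,\om)\cap\Hc(\div,\om)$ after relabelling. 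The argument uses the dual pair (scalar potentials, resp.\ vector potentials).

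For the dense set $\cD$ we use test fields $\Cic(\om)$, which are dense in $D(\rotc)$ by definition. Lemma \ref{lem:densityJukka} (ii) then shows that $\P_{\nac}(\Cic(\om))$ is dense in $D_{1,0}$. For $\phi\in\Cic(\om)$ we have
$$\P_{\nac}\phi=\phi-\na u,\qquad u\coloneqq(-\Delta_{\dir}+1)^{-1}(-\div\phi).$$
Here $u$ solves a Dirichlet problem with smooth right-hand side on a bounded smooth domain. By standard elliptic regularity, $u\in\Ci(\ol\om)$ (at least $\H^{2}(\om)$ suffices), with $u=0$ on $\ga$. Hence $\P_{\nac}\phi\in\H^{1}(\om)\cap\Hc(\rot,\om)$; its tangential trace vanishes because $u$ vanishes on $\ga$. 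Applying Corollary \ref{cor:ibpsmoothconvex} to these fields gives
$$\norm{\na y}^{2}\le\norm{\rot y}^{2}+\norm{\div y}^{2}\qquad\text{for all } y\in\P_{\nac}(\Cic(\om)).$$
In the second case, $\P_{\rot}\phi=\phi-\rot(\rotc\rot+1)^{-1}\rotc\phi$. The $\H^{1}$-regularity of this field requires regularity of the Maxwell-type problem $(\rotc\rot+1)w=\rotc\phi$ with $\rot w$ having vanishing normal trace. I would instead rewrite $\rot w$ as a field $v$ solving $\rotc\rot v+v-\na\divc v$-type elliptic systems. Alternatively, I would use the more symmetric choice $\A_{0}=\divc^{*}$ phrasing, i.e.\ $\A_{0}=-\na$ composed appropriately, and project with the Neumann Laplacian: $y=\phi-\na u$ with $u$ solving a Neumann problem $(-\Delta_{\neu}+1)$-style. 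This keeps the correction a gradient of an $\H^{2}$ (smooth) Neumann solution, so $y\in\H^{1}$ with $y_{\nor}=0$. Concretely, in case two I take $\A_{1}=\rot$ acting on a dense set of $\Ci(\ol\om)$ fields with $\nor$-trace zero, and correct the divergence by gradients of Neumann solutions. This is the step I expect to be most delicate: choosing the complex so that the projection stays inside smooth (or $\H^{2}$) fields with the right boundary condition and the dense set is genuinely dense.

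Finally, take $E\in D_{1,0}$ and $y_{n}$ from the dense set with $y_{n}\to E$ in $D_{1,0}$. The Gaffney inequality applied to $y_{n}-y_{m}$ shows that $(y_{n})$ is Cauchy in $\H^{1}(\om)$. Its $\H^{1}$-limit coincides with the $\Lt$-limit $E$, so $E\in\H^{1}(\om)$. Passing to the limit in the inequality yields the claim. The main obstacle is the elliptic regularity up to the boundary of the projected fields, which relies on standard $\H^{2}$-theory for the scalar Dirichlet and Neumann Laplacians on smooth bounded domains. Convexity enters only through Corollary \ref{cor:ibpsmoothconvex}.
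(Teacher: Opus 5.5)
Your proposal follows the paper's proof: in the first case you use the same ingredients ($\A_{0}=\nac$, $\A_{1}=\rotc$, $\cD=\Cic(\om)$, Lemma \ref{lem:densityJukka}(ii), $\H^{2}$-regularity of the Dirichlet problem, Corollary \ref{cor:ibpsmoothconvex}, and the Cauchy argument in $\H^{1}(\om)$), and the choice you finally settle on in the second case ($\A_{0}=\pm\na$, $\A_{1}=\rot$, $\A_{0}^{*}=\mp\divc$, with a gradient correction from the Neumann problem) is exactly the paper's. Delete the first pairing you wrote for that case, $\A_{0}=\rot$, $\A_{1}=\divc$: it violates the complex property, since $R(\rot)\nsubseteq N(\divc)$, and it would give $D_{1,0}=\Hc(\rot,\om)\cap\Hc(\div,\om)$. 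The density of $\cD=\{\phi\in\Ci(\ol{\om}):\phi_{\nor}=0\}$ in $\H(\rot,\om)$, which you flagged and the paper leaves implicit, holds because every $\psi\in\Ci(\ol{\om})$ splits into a field in $\cD$ plus a smooth field with vanishing tangential trace, and the latter lies in $\Hc(\rot,\om)=\ol{\Cic(\om)}\subset\ol{\cD}$.
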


\begin{proof} 

Let $E\in\Hc(\rot,\om)\cap\H(\div,\om)=:D_{1,0}$.
We use Lemma \ref{lem:densityJukka}(iii) 
for $\A_{0}\coloneqq\nac$, $\A_{1}\coloneqq\rotc$ and $\cD\coloneqq\Cic(\om)$. 
Then $\A_{0}^{*}=-\div$ and $\A_{1}^{*}\coloneqq\rot$.
As $\cD$ is dense in $D(\A_{1})=\Hc(\rot,\om)$,
there exists $(E_{\ell})\subset\cD$ 
such that 
$$\wt{E}_{\ell}
\coloneqq\P_{\A_{0}}E_{\ell}
=E_{\ell}-\A_{0}(\A_{0}^{*}\A_{0}+1)^{-1}\A_{0}^{*}E_{\ell}
=E_{\ell}+\nac(1-\div\nac)^{-1}\div E_{\ell}
\to E
\quad\text{in }D_{1,0}.$$
Elliptic regularity for the Dirichlet Laplacian $\A_{0}^{*}\A_{0}=-\div\nac$
yields $(1-\div\nac)^{-1}\div E_{\ell}\in\H^{2}(\om)$,
and thus $\wt{E}_{\ell}\in\H^{1}(\om)\cap D_{1,0}=\H^{1}(\om)\cap\Hc(\rot,\om)$.
Corollary \ref{cor:ibpsmoothconvex} shows
$$\bnorm{\na(\wt{E}_{\ell}-\wt{E}_{k})}_{\Lt(\om)}^{2}
\leq\bnorm{\rot(\wt{E}_{\ell}-\wt{E}_{k})}_{\Lt(\om)}^{2}
+\bnorm{\div(\wt{E}_{\ell}-\wt{E}_{k})}_{\Lt(\om)}^{2}
\leq\norm{\wt{E}_{\ell}-\wt{E}_{k}}_{D_{1,0}}^{2}.$$
Thus $(\wt{E}_{\ell})$ is a Cauchy sequence in $\H^{1}(\om)$.
Hence $\wt{E}_{\ell}\to E$ in $\H^{1}(\om)$, in particular, $E\in\H^{1}(\om)$.
Moreover, again by Corollary \ref{cor:ibpsmoothconvex}
$$\norm{\na E}_{\Lt(\om)}^{2}
\ot\norm{\na\wt{E}_{\ell}}_{\Lt(\om)}^{2}
\leq\norm{\rot\wt{E}_{\ell}}_{\Lt(\om)}^{2}
+\norm{\div\wt{E}_{\ell}}_{\Lt(\om)}^{2}
\to\norm{\rot E}_{\Lt(\om)}^{2}
+\norm{\div E}_{\Lt(\om)}^{2}.$$

Analogously, we prove the assertions for $E\in\H(\rot,\om)\cap\Hc(\div,\om)$
using Lemma \ref{lem:densityJukka} with $\A_{0}=\na$, $\A_{1}=\rot$,
and $\A_{0}^{*}=-\divc$ and $\A_{1}^{*}\coloneqq\rotc$,
as well as elliptic regularity for the Neumann Laplacian $\A_{0}^{*}\A_{0}=-\divc\na$.
\end{proof}

It is well-known that smoothness of the considered bounded domain can be dropped:

\begin{theo}[Gaffney's inequality for bounded and convex domains]
\label{theo:gaffneyconvexbd}
Let $\om\subset\rt$ be bounded and convex.
Then $\om$ is a Gaffney domain.

More precisely:
If $E\in\Hc(\rot,\om)\cap\H(\div,\om)$
or $E\in\H(\rot,\om)\cap\Hc(\div,\om)$,
then $E\in\H^{1}(\om)$ and 
$$\norm{\na E}_{\Lt(\om)}^{2}
\leq\norm{\rot E}_{\Lt(\om)}^{2}
+\norm{\div E}_{\Lt(\om)}^{2}.$$
\end{theo}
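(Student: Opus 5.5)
The plan is to approximate $\om$ from inside by bounded, smooth, convex domains $\om_{n}$. On each $\om_{n}$ we apply Lemma \ref{lem:gaffneyconvexsmooth} to a suitably modified field with uniform bounds, and then pass to the limit by weak compactness. We treat the case $E\in\Hc(\rot,\om)\cap\H(\div,\om)$ in detail; the case $E\in\H(\rot,\om)\cap\Hc(\div,\om)$ is analogous, with the roles of the Dirichlet and Neumann problems exchanged. Without loss of generality $0\in\om$. We choose smooth, bounded, convex $\om_{n}$ with $\ol{\om_{n}}\subset\om_{n+1}$ and $\bigcup_{n}\om_{n}=\om$. This is a standard construction: mollify the gauge function of $\om$, or take sublevel sets of a smooth convex function approximating the distance function, and slightly shrink.

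Boundary conditions do not simply restrict to $\om_{n}$, so we work by Helmholtz projection. For $E$ as above, let $f\coloneqq\rot E$ and $g\coloneqq\div E$. On $\om_{n}$ we solve for $E_{n}\in\Hc(\rot,\om_{n})\cap\H(\div,\om_{n})$ with
\[
\rot E_{n}=f\ \text{on}\ \om_{n},\qquad \div E_{n}=g\ \text{on}\ \om_{n}.
\]
This is not quite right, since $f|_{\om_{n}}$ need not lie in $R(\rotc_{\om_{n}})$.

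A cleaner route is therefore to approximate $E$ itself, in the norm of $D_{1,0}=\Hc(\rot,\om)\cap\H(\div,\om)$, by fields $E_{\ell}\in\Hc(\rot,\om)\cap\H(\div,\om)$ with compact support in $\om$. Lemma \ref{lem:densityJukka}(ii) provides this density for $\P_{\A_{0}}(\Cic(\om))$, but $\P_{\A_{0}}\phi=\phi+\nac u$ with $u\coloneqq(1-\div\nac)^{-1}\div\phi$ is not compactly supported. We then restrict to $\om_{n}$ and use the Dirichlet problem on $\om_{n}$ instead: set
\[
\wt{E}_{\ell,n}\coloneqq\phi_{\ell}+\nac_{\om_{n}}(1-\div\nac_{\om_{n}})^{-1}\div\phi_{\ell}
\]
for $\phi_{\ell}\in\Cic(\om)$ with $\supp\phi_{\ell}\subset\om_{n}$ and $\phi_{\ell}\to E$ in $\Hc(\rot,\om)$. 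Extended by zero, $\wt{E}_{\ell,n}$ lies in $\Hc(\rot,\om)\cap\H^{1}(\om_{n})$. Lemma \ref{lem:gaffneyconvexsmooth} on $\om_{n}$ gives
\[
\norm{\na\wt{E}_{\ell,n}}_{\Lt(\om_{n})}^{2}\leq\norm{\rot\phi_{\ell}}_{\Lt(\om)}^{2}+\norm{\div\wt{E}_{\ell,n}}_{\Lt(\om_{n})}^{2},
\]
and the identity \eqref{eq:proofJukkacomp} on $\om_{n}$ shows $\norm{\wt{E}_{\ell,n}}_{D_{1,0}(\om_{n})}\leq\norm{\phi_{\ell}}_{\H(\rot,\om)}$.

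Letting $n\to\infty$ with $\ell$ fixed, we expect $\wt{E}_{\ell,n}\to\P_{\A_{0}}\phi_{\ell}$ in $\Lt(\om)$, since $\Hc^{1}(\om_{n})$ increases to a dense subspace of $\Hc^{1}(\om)$; this is a Mosco-type convergence of the Dirichlet resolvents. The zero extensions are, however, not in $\H(\div,\om)$, because jumps of the normal component occur across $\p\om_{n}$. We therefore only use local $\H^{1}$ bounds: on any fixed $\om_{m}$ the sequence $(\wt{E}_{\ell,n})_{n\geq m}$ is bounded in $\H^{1}(\om_{m})$. By weak lower semicontinuity the limit $\P_{\A_{0}}\phi_{\ell}$ satisfies
\[
\norm{\na\P_{\A_{0}}\phi_{\ell}}_{\Lt(\om_{m})}^{2}\leq\liminf_{n\to\infty}\Big(\norm{\rot\phi_{\ell}}^{2}+\norm{\div\wt{E}_{\ell,n}}^{2}\Big).
\]
We identify the limit of the divergences by the explicit formula $\div\wt{E}_{\ell,n}=(1-\div\nac_{\om_{n}})^{-1}\div\phi_{\ell}$, which converges in $\Lt$ to $\div\P_{\A_{0}}\phi_{\ell}$ by the same resolvent convergence. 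Letting $m\to\infty$ and then $\ell\to\infty$, with $\P_{\A_{0}}\phi_{\ell}\to E$ in $D_{1,0}$ by Lemma \ref{lem:densityJukka}(ii),(iii'), the Cauchy argument from the proof of Lemma \ref{lem:gaffneyconvexsmooth} yields $E\in\H^{1}(\om)$ together with the inequality.

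The main obstacle is this resolvent convergence, i.e.\ the strong $\Lt$ convergence of $(1-\div\nac_{\om_{n}})^{-1}h$ to $(1-\div\nac_{\om})^{-1}h$ and of their gradients. We would prove it via the variational formulation: the solutions $u_{n}$ are bounded in $\Hc^{1}(\om)$. Any weak limit solves the limit problem, because test functions in $\Cic(\om)$ are eventually supported in $\om_{n}$. Norm convergence then follows from convergence of the energies $\norm{u_{n}}^{2}+\norm{\na u_{n}}^{2}=\scp{h}{u_{n}}$. The Neumann case requires instead that functions in $\H^{1}(\om)$ restricted to $\om_{n}$ approximate well, which is the delicate point there. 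For that case we would first try to use the star-shapedness of convex domains, working with dilations $x\mapsto x/t$, $t\nearrow1$, of a smooth convex domain in place of restrictions.
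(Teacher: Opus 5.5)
There is a genuine gap: the second case, $E\in\H(\rot,\om)\cap\Hc(\div,\om)$, is not proved. Moreover, ``exchanging the roles of the Dirichlet and Neumann problems'' breaks exactly the mechanism that makes your first case work.

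With $\A_{0}=\na$, $\A_{1}=\rot$ in Lemma \ref{lem:densityJukka}, $\Cic(\om)$ is no longer dense in $D(\A_{1})=\H(\rot,\om)$, so the approximating fields must reach $\p\om$. Their restrictions to $\om_{n}$ violate the normal boundary condition on $\p\om_{n}$, so $\divc_{\om_{n}}$ cannot be applied to them. The correction then becomes an inhomogeneous Neumann problem in $\H^{1}(\om_{n})$, and this space does not sit inside $\H^{1}(\om)$ via zero extension. Your nested-subspace (Galerkin/energy) convergence is therefore unavailable, as you note yourself. The dilation idea is not worked out, and it is unclear how dilating would produce a field with vanishing normal component on a \emph{smooth} convex boundary.

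The case can be closed in two ways.
\begin{itemize}
\item \emph{Within your framework.} Apply Lemma \ref{lem:densityJukka} with $\A_{0}=\rotc$, $\A_{1}=\divc$, $\cD=\Cic(\om)$, so that $D_{1,0}=\H(\rot,\om)\cap\Hc(\div,\om)$. On $\om_{n}$ take $\phi-\rotc_{\om_{n}}w_{n}$ with $w_{n}\coloneqq(\rot\rotc_{\om_{n}}+1)^{-1}\rot\phi\in\Hc(\rot,\om_{n})\subset\Hc(\rot,\om)$. This field lies in $\H(\rot,\om_{n})\cap\Hc(\div,\om_{n})$, its $\rot$ is $w_{n}$, and its $\div$ is $\div\phi$. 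Moreover $w_{n}\to w$ in $\H(\rot,\om)$, as orthogonal projections onto nested subspaces with dense union. Your argument then goes through verbatim.
\item \emph{As in the paper.} Correct $E|_{\om_{n}}$ itself by $\na u_{n}$, where $u_{n}\in\H^{1}(\om_{n})$ is the Riesz representative of $\psi\mapsto\scp{\div E}{\psi}_{\Lt(\om_{n})}+\scp{E}{\na\psi}_{\Lt(\om_{n})}$. Any weak limit $u$ of the zero extensions lies in $\H^{1}(\om)$ and satisfies $\scp{u}{\psi}_{\H^{1}(\om)}=0$ for all $\psi\in\H^{1}(\om)$, because $E\in\Hc(\div,\om)$. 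Hence $u=0$, and no strong (Mosco) convergence is needed.
\end{itemize}

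In the first case there is also a slip. If $\phi_{\ell}\to E$ in $\Hc(\rot,\om)$, then (iii') gives $\P_{\A_{0}}\phi_{\ell}\to\P_{\A_{0}}E=E+\nac(1-\div\nac)^{-1}\div E$, which differs from $E$ unless $\div E=0$. You must choose $\phi_{\ell}$ by Lemma \ref{lem:densityJukka}(ii), so that $\P_{\A_{0}}\phi_{\ell}\to E$ in $D_{1,0}$; in general the $\phi_{\ell}$ then do not converge to $E$. Linearity of $\P_{\A_{0}}$ makes your Cauchy argument valid.

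With this fix your Dirichlet-case proof is correct, but it differs from the paper's. You use Lemma \ref{lem:densityJukka} on the non-smooth $\om$ and need strong convergence of Dirichlet resolvents on nested subspaces to identify $\lim_{n}\div\wt{E}_{\ell,n}$. The paper instead corrects $E|_{\om_{n}}$ directly by $\rot H_{n}$, where $H_{n}\in\H(\rot,\om_{n})$ is the Riesz representative of $\Psi\mapsto\scp{E}{\rot\Psi}_{\Lt(\om_{n})}-\scp{\rot E}{\Psi}_{\Lt(\om_{n})}$. It obtains $\norm{E-\rot H_{n}}_{\H^{1}(\om_{n})}\leq\norm{E}_{\H(\rot,\om)\cap\H(\div,\om)}$ from the Riesz identity and needs only weak limits, since the boundary condition of $E$ forces the weak limit of $H_{n}$ to vanish. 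This is what lets the paper treat both boundary conditions in the same way.
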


For a proof see the book of Grisvard, 
cf.~\cite[Theorem 3.2.1.2, Theorem 3.2.1.3]{G1985a},
or \cite[Corollary 3.6, Theorem 3.9]{GR1986a}
and \cite[Theorem 2.17]{ABDG1998a}\footnote{We note 
that in \cite[p. 834]{ABDG1998a}
the proof for $X_{N}(\om)$ appears to be wrong. 
In fact, due to the solenoidal condition, 
one needs to use the space $X_{T}(\om_{k})$ instead of $V_{T}(\om_{k})$. 
However, in $X_{T}(\om_{k})$, the arguments for the second order elliptic system for $\zeta$ no longer hold. 
The present approach resolves these inconsistencies.} 
for the case of Maxwell's equations.
Our proof, following the book of Grisvard \cite{G1985a},
avoids the misleading notion of traces and the uniqueness of solutions of second order elliptic systems.
A generalised version has already been presented in the appendix of \cite{P2019a}.
Here, we sketch the proof only, 
and provide a detailed version in Appendix \ref{sec:detproofgaffneyconvex}.

\begin{proof}[Proof of Theorem \ref{theo:gaffneyconvexbd}]
Let $E\in\Hc(\rot,\om)\cap\H(\div,\om)$.
We pick a sequence of increasing, convex, and smooth subdomains 
$\om_{\ell}\subset\ol{\om}_{\ell}\subset\om_{\ell+1}\subset\dots\subset\om$
such that $\dist(\p\om,\p\om_{\ell})\to0$,
see, e.g., \cite[Lemma 3.2.1.1]{G1985a}. 
For $\om_{\ell}$ we find
$H_{\ell}\in\H(\rot,\om_{\ell})$ 
such that for all $\Psi\in\H(\rot,\om_{\ell})$
\begin{align}
\label{eq:pdedefomn}
\scp{H_{\ell}}{\Psi}_{\H(\rot,\om_{\ell})}
=\scp{E}{\rot\Psi}_{\Lt(\om_{\ell})}
-\scp{\rot E}{\Psi}_{\Lt(\om_{\ell})}
\end{align}
(Riesz isometry). 
Then 
$$E_{\ell}\coloneqq E-\rot H_{\ell}\in\Hc(\rot,\om_{\ell})\cap\H(\div,\om_{\ell}),
\qquad
\rot E_{\ell}=\rot E+H_{\ell},
\quad
\div E_{\ell}=\div E.$$
By Lemma \ref{lem:gaffneyconvexsmooth}
we have $E_{\ell}\in\H^{1}(\om_{\ell})$ with
\begin{align}
\label{eq:naomeganest}
\norm{\na E_{\ell}}_{\Lt(\om_{\ell})}^{2}
&\leq\norm{\rot E_{\ell}}_{\Lt(\om_{\ell})}^{2}
+\norm{\div E_{\ell}}_{\Lt(\om_{\ell})}^{2}
=\norm{\rot E+H_{\ell}}_{\Lt(\om_{\ell})}^{2}
+\norm{\div E}_{\Lt(\om_{\ell})}^{2}.
\end{align}
For $\Psi=H_{\ell}$, \eqref{eq:pdedefomn} shows
\begin{align}
\label{eq:zetanest}
\norm{H_{\ell}}_{\H(\rot,\om_{\ell})}^{2}
&=\scp{E}{\rot H_{\ell}}_{\Lt(\om_{\ell})}
-\scp{\rot E}{H_{\ell}}_{\Lt(\om_{\ell})}
\leq\norm{E}_{\H(\rot,\om_{\ell})}\norm{H_{\ell}}_{\H(\rot,\om_{\ell})}
\end{align}
and thus
\begin{align}
\label{eq:zetanesttwo}
\norm{H_{\ell}}_{\H(\rot,\om_{\ell})}
&\leq\norm{E}_{\H(\rot,\om_{\ell})}
\leq\norm{E}_{\H(\rot,\om)}.
\end{align}
Combining \eqref{eq:naomeganest} and the equation part of \eqref{eq:zetanest} we observe
\begin{align*}
\norm{E_{\ell}}_{\H^{1}(\om_{\ell})}^{2}
&=\norm{E_{\ell}}_{\Lt(\om_{\ell})}^{2}
+\norm{\na E_{\ell}}_{\Lt(\om_{\ell})}^{2}\\
&\leq\norm{E_{\ell}}_{\Lt(\om_{\ell})}^{2}
+\norm{\rot E+H_{\ell}}_{\Lt(\om_{\ell})}^{2}
+\norm{\div E}_{\Lt(\om_{\ell})}^{2}\\
&=\norm{E}_{\Lt(\om_{\ell})}^{2}
+\norm{\rot H_{\ell}}_{\Lt(\om_{\ell})}^{2}
+\norm{\rot E}_{\Lt(\om_{\ell})}^{2}
+\norm{H_{\ell}}_{\Lt(\om_{\ell})}^{2}
+\norm{\div E}_{\Lt(\om_{\ell})}^{2}\\
&\qquad\qquad
-2\scp{E}{\rot H_{\ell}}_{\Lt(\om_{\ell})}
+2\scp{\rot E}{H_{\ell}}_{\Lt(\om_{\ell})}\\
&=\norm{E}_{\H(\rot,\om_{\ell})\cap\H(\div,\om_{\ell})}^{2}
-\norm{H_{\ell}}_{\H(\rot,\om_{\ell})}^{2},
\end{align*}
and therefore
\begin{align}
\label{eq:hoomeganest}
\norm{E_{\ell}}_{\H^{1}(\om_{\ell})}
\leq\norm{E}_{\H(\rot,\om_{\ell})\cap\H(\div,\om_{\ell})}
\leq\norm{E}_{\H(\rot,\om)\cap\H(\div,\om)}.
\end{align}

Let us denote the extension by zero to $\om$ by $\wt{\cdot}$.
Then by \eqref{eq:zetanesttwo} and \eqref{eq:hoomeganest}
the sequences $(\wt{H}_{\ell})$, $(\wt{\rot H}_{\ell})$,
and $(\wt{E}_{\ell})$, $(\wt{\na E}_{\ell})$
are bounded in $\Lt(\om)$, 
and we can extract weakly converging subsequences, again denoted by the index $\ell$, such that
\begin{align*}
\wt{H}_{\ell}
&\wto{\Lt(\om)}H\in\Lt(\om),
&
\wt{E}_{\ell}
&\wto{\Lt(\om)}\wh{E}\in\Lt(\om),\\
(\wt{\rot H}_{\ell})
&\wto{\Lt(\om)}F\in\Lt(\om),
&
\wt{\na E}_{\ell}
&\wto{\Lt(\om)}G\in\Lt(\om).
\end{align*}
Then $\wh{E}\in\H^{1}(\om)$ and $\na\wh{E}=G$
as well as $H\in\H(\rot,\om)$ and $\rot H=F$.
Moreover, we have for $\Psi\in\H(\rot,\om)\subset\H(\rot,\om_{\ell})$
\begin{align*}
\scp{H_{\ell}}{\Psi}_{\H(\rot,\om_{\ell})}
=\scp{\wt{H}_{\ell}}{\Psi}_{\Lt(\om)}
+\scp{\wt{\rot H}_{\ell}}{\rot\Psi}_{\Lt(\om)}
\to\scp{H}{\Psi}_{\H(\rot,\om)}
\end{align*}
and, by \eqref{eq:pdedefomn},
\[
 \scp{H_{\ell}}{\Psi}_{\H(\rot,\om_{\ell})} =\scp{E}{\rot\Psi}_{\Lt(\om_{\ell})}
-\scp{\rot E}{\Psi}_{\Lt(\om_{\ell})}
\to\scp{E}{\rot\Psi}_{\Lt(\om)}
-\scp{\rot E}{\Psi}_{\Lt(\om)}
=0
\]
as $E\in\Hc(\rot,\om)$.
For $\Psi=H$ we get $H=0$.
Furthermore, we observe that on the one hand by \eqref{eq:hoomeganest}
\begin{align*}
\scp{\wh{E}}{\wt{E}_{\ell}}_{\Lt(\om)}
+\scp{\na\wh{E}}{\wt{\na E}_{\ell}}_{\Lt(\om)}
\to\scp{\wh{E}}{\wh{E}}_{\Lt(\om)}
+\scp{\na\wh{E}}{\na\wh{E}}_{\Lt(\om)}
=\norm{\wh{E}}_{\H^{1}(\om)}^{2}
\end{align*}
and, by \eqref{eq:hoomeganest}, on the other hand
\begin{align*}
\scp{\wh{E}}{\wt{E}_{\ell}}_{\Lt(\om)}
+\scp{\na\wh{E}}{\wt{\na E}_{\ell}}_{\Lt(\om)} & =\scp{\wh{E}}{E_{\ell}}_{\Lt(\om_{\ell})}
+\scp{\na\wh{E}}{\na E_{\ell}}_{\Lt(\om_{\ell})}\\&
\leq\norm{\wh{E}}_{\H^{1}(\om_{\ell})}
\norm{E_{\ell}}_{\H^{1}(\om_{\ell})}
\leq\norm{\wh{E}}_{\H^{1}(\om)}
\norm{E}_{\H(\rot,\om)\cap\H(\div,\om)},
\end{align*}
showing
\begin{align}
\label{eq:hoomnormomegahat}
\norm{\wh{E}}_{\H^{1}(\om)}
\leq\norm{E}_{\H(\rot,\om)\cap\H(\div,\om)}.
\end{align}

Since
$E=E_{\ell}+\rot H_{\ell}$ in $\om_{\ell}$,
we have
$\chi_{\om_{\ell}}E
=\wt{E}_{\ell}+\wt{\rot H}_{\ell}
\wto{\Lt(\om)}\wh{E}+\rot H
=\wh{E}$.
In any case, 
$\chi_{\om_{\ell}}E\to E$ in $\Lt(\om)$. Thus
$E=\wh{E}\in\H^{1}(\om)$ and by \eqref{eq:hoomnormomegahat}
$$\norm{E}_{\H^{1}(\om)}
\leq\norm{E}_{\H(\rot,\om)\cap\H(\div,\om)},$$
in particular,
$\norm{\na E}_{\Lt(\om)}^{2}
\leq\norm{\rot E}_{\Lt(\om)}^{2}
+\norm{\div E}_{\Lt(\om)}^{2}$, taking the limit in \eqref{eq:naomeganest}.

Similarly, we show the assertions for 
$E\in\H(\rot,\om)\cap\Hc(\div,\om)$; which is carried out in Appendix \ref{sec:detproofgaffneyconvex}.
\end{proof}

\subsection{Possibly Unbounded Domains}
\label{sec:gaffneyunbdom}

Theorem \ref{theo:gaffneyconvexbd} from above
states that bounded and convex domains are  Gaffney domains.
The next result enables us to transition from bounded to possibly unbounded domains.

\begin{lem}[permanence principle for (exact) Gaffney domains]
\label{lem:approxG}
Let $\om^{\star}\subset\om\subset\rt$ be open, bounded, and star-shaped
with center $x_{\star}\in\om^{\star}$. 
Define $\om^{\star}_{r}\coloneqq x_{\star}+r(\om^{\star}-x_{\star})$ for all $r>0$.

If $\om_{r}\coloneqq\om^{\star}_{r}\cap\om$ is a (exact) Gaffney domain for all $r$, 
then so is $\om$. 
\end{lem}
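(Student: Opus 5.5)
The plan is a cut-off argument: localise a given field to the bounded sets $\om_{r}$, apply the Gaffney property there, and let the cut-off radius tend to infinity. Star-shapedness is only used so that the sets $\om^{\star}_{r}$ exhaust $\rt$.

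\textbf{Exhaustion and cut-off.} Since $\om^{\star}$ is open and contains $x_{\star}$, there is $\delta>0$ with $B(x_{\star},\delta)\subset\om^{\star}$. Hence $B(x_{\star},r\delta)\subset\om^{\star}_{r}$ for every $r>0$.

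Fix $\chi\in\Cic(\rt)$ with $0\leq\chi\leq1$, $\chi=1$ on $B(0,1)$ and $\supp\chi\subset B(0,2)$. Put $\chi_{R}(x)\coloneqq\chi\big((x-x_{\star})/R\big)$, so that $|\na\chi_{R}|\leq c/R$. For each $R$ choose $r$ with $r\delta>3R$; then $\supp\chi_{R}$ is a compact subset of $\om^{\star}_{r}$, at positive distance from $\p\om^{\star}_{r}$.

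\textbf{Localisation.} Let $E\in D(\rotc_{\om})\cap D(\div_{\om})$; the case $E\in D(\rot_{\om})\cap D(\divc_{\om})$ is treated in the same way with the roles of $\rot$ and $\div$ exchanged. I claim that $\chi_{R}E|_{\om_{r}}\in D(\rotc_{\om_{r}})\cap D(\div_{\om_{r}})$.

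For the boundary condition, take test fields $\phi_{n}\in\Cic(\om)$ with $\phi_{n}\to E$ in $\H(\rot,\om)$. The products $\chi_{R}\phi_{n}$ belong to $\Cic(\om_{r})$, because their support lies in $\om$ and in a compact subset of $\om^{\star}_{r}$. They converge to $\chi_{R}E$ in $\H(\rot,\om_{r})$ by the product rule $\rot(\chi_{R}\phi)=\chi_{R}\rot\phi+\na\chi_{R}\times\phi$.

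For the divergence, the product rule gives $\div(\chi_{R}E)=\chi_{R}\div E+\na\chi_{R}\cdot E$.

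In the other case one argues identically: the normal boundary condition passes to $\chi_{R}E$ through approximation in $\H(\div,\om)$ by $\Cic(\om)$, while $\rot(\chi_{R}E)$ is given by the product rule.

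\textbf{Applying the hypothesis.} Since $\om_{r}$ is a Gaffney domain, $\chi_{R}E\in\H^{1}(\om_{r})$ and
$$\norm{\na(\chi_{R}E)}_{\Lt(\om_{r})}^{2}
\leq\norm{\chi_{R}\rot E+\na\chi_{R}\times E}_{\Lt(\om_{r})}^{2}
+\norm{\chi_{R}\div E+\na\chi_{R}\cdot E}_{\Lt(\om_{r})}^{2},$$
with equality in the exact case.

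The field $\chi_{R}E$ vanishes on $\om\cap\big(B(x_{\star},r\delta)\setminus\supp\chi_{R}\big)$. Consequently its extension by zero to $\om$ lies in $\H^{1}(\om)$ and has the same gradient norm; this is checked locally on $\om\cap B(x_{\star},r\delta)=B(x_{\star},r\delta)\cap\om_{r}$ and on the complement of $\supp\chi_{R}$. In particular $E\in\H^{1}_{\mathrm{loc}}$, with
$$\na E=\na(\chi_{R}E)\quad\text{on }\om\cap B(x_{\star},R).$$

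\textbf{Passing to the limit $R\to\infty$.} The terms containing $\na\chi_{R}$ are bounded by $(c/R)\norm{E}_{\Lt(\om)}$ and therefore tend to $0$. By dominated convergence, $\chi_{R}\rot E\to\rot E$ and $\chi_{R}\div E\to\div E$ in $\Lt(\om)$, so the right-hand side converges to $\norm{\rot E}_{\Lt(\om)}^{2}+\norm{\div E}_{\Lt(\om)}^{2}$.

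On the left-hand side, $\na(\chi_{R}E)=\chi_{R}\na E+\na\chi_{R}\otimes E$ holds pointwise (with the paper's transpose convention for $\na E$). Monotone convergence applied to $\norm{\chi_{R}\na E}_{\Lt(\om)}$ then yields $\na E\in\Lt(\om)$, i.e. $E\in\H^{1}(\om)$, together with Gaffney's inequality on $\om$. In the exact case, dominated convergence now gives $\na(\chi_{R}E)\to\na E$ strongly in $\Lt(\om)$, so the equalities pass to the limit as well.

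\textbf{Main obstacle.} The delicate step is the localisation, namely showing that the homogeneous boundary condition of $E$ on $\p\om$ transfers to $\chi_{R}E$ on $\om_{r}$. This needs the cut-off support to be compactly contained in $\om^{\star}_{r}$, so that the new boundary portion $\p\om^{\star}_{r}\cap\om$ carries no trace at all. The rest of the argument is routine product-rule bookkeeping.
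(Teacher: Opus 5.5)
Your proof is correct and is essentially the paper's argument: cut off $E$, apply the Gaffney property of $\om_{r}$ to the truncated field, bound the terms containing the cut-off gradient by $(c/R)\norm{E}_{\Lt(\om)}$, deduce $E\in\H^{1}(\om)$ from the uniform bound, and pass to the limit in the (in)equality, with strong convergence handling the exact case. The only difference is your choice of ball cut-offs with $\supp\chi_{R}\subset B(x_{\star},r\delta)\subset\om^{\star}_{r}$ instead of the paper's $\varphi_{r}$ adapted to $\om^{\star}_{r/2}\subset\om^{\star}_{2r/3}$. This is slightly more robust: the paper's construction tacitly needs $\ol{\om^{\star}_{1/2}}\subset\om^{\star}_{2/3}$, which can fail for a merely star-shaped $\om^{\star}$ (e.g.\ a ball with a radial slit), whereas your argument only uses that $\om^{\star}$ is open and contains $x_{\star}$.
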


\begin{proof} 
Let $E\in\Hc(\rot,\om)\cap\H(\div,\om)$
or $E\in\H(\rot,\om)\cap\Hc(\div,\om)$.
Without loss of generality, $x_{\star}=0$.
As $\om^{\star}_{1}=\om^{\star}$ is open, bounded, and star-shaped,
so is $\om^{\star}_{1/2}$.
Moreover, it is compactly contained in $\om^{\star}_{1}$.
Thus, there exists $\varphi\in\Cic\big(\rt,[0,1]\big)$ such that
$\varphi|_{\om^{\star}_{1/2}}=1$ and $\varphi|_{\rt\setminus\om^{\star}_{2/3}}=0$.
Put $\varphi_{r}\coloneqq\varphi(\,\cdot\,/r)$ for $r>0$.
Then $\varphi_{r}|_{\om^{\star}_{r/2}}=1$
and $\varphi_{r}|_{\rt\setminus\om^{\star}_{2r/3}}=0$.
Note that $\supp\na\varphi_{r}\subset\ol{\om}^{\star}_{2r/3}\setminus\om^{\star}_{r/2}$
and $|\na\varphi_{r}|\leq c/r$.
For all $r$ the product rules
\begin{align*}
\p_{j}(\varphi_{r}E)&=\varphi_{r}\p_{j}E+(\p_{j}\varphi_{r})E,\\
\rot(\varphi_{r}E)&=\varphi_{r}\rot E+(\na\varphi_{r})\times E,\\
\div(\varphi_{r}E)&=\varphi_{r}\div E+(\na\varphi_{r})\cdot E
\end{align*}
imply that 
$\varphi_{r}E\in\Hc(\rot,\om_{r})\cap\H(\div,\om_{r})$ 
or $\varphi_{r}E\in\H(\rot,\om_{r})\cap\Hc(\div,\om_{r})$. 
Since $\om_{r}$ is a Gaffney domain, we get
$\varphi_{r}E\in\H^{1}(\om_{r})$ with 
\begin{align}
\label{eq:estr1}
\bnorm{\na(\varphi_{r}E)}_{\Lt(\om)}^{2}
\leq\Big(
\bnorm{\rot(\varphi_{r}E)}_{\Lt(\om)}^{2}+\bnorm{\div(\varphi_{r}E)}_{\Lt(\om)}^{2}
\Big).
\end{align}
In particular, we have $E\in\H^{1}(\om_{r})$ for all $r$.
Then (with $c$ independent of $r$) by \eqref{eq:estr1}
\begin{align}
\begin{aligned}
\label{eq:estr2}
\norm{\varphi_{r}\na E}_{\Lt(\om)}
&\leq c\Big(\bnorm{\na(\varphi_{r}E)}_{\Lt(\om)}
+\sum_{j=1}^{3}\bnorm{(\p_{j}\varphi_{r})E}_{\Lt(\om)}\Big)\\
&\leq c\Big(\bnorm{\rot(\varphi_{r}E)}_{\Lt(\om)}
+\bnorm{\div(\varphi_{r}E)}_{\Lt(\om)}
+\frac{1}{r}\norm{E}_{\Lt(\om)}\Big)\\
&\leq c\Big(\bnorm{\rot E}_{\Lt(\om)}
+\bnorm{\div E}_{\Lt(\om)}
+\frac{1}{r}\norm{E}_{\Lt(\om)}\Big).
\end{aligned}
\end{align}
The monotone convergence theorem yields
$\norm{\na E}_{\Lt(\om)}
\leq c\big(\bnorm{\rot E}_{\Lt(\om)}
+\bnorm{\div E}_{\Lt(\om)}\big)$,
i.e., $E\in\H^{1}(\om)$.
Finally, again by the product rules and $|\na\varphi_{r}|\leq c/r$, 
we infer that $\na(\varphi_{r}E)\to\na E$, 
$\rot(\varphi_{r}E)\to\rot E$,
and $\div(\varphi_{r}E)\to\div E$ in $\Lt(\om)$. 
This, together with \eqref{eq:estr1}, 
implies \eqref{eq:estr1} for $\varphi_{r}E$ replaced by $E$, 
i.e., $\om$ is a Gaffney domain.

If \eqref{eq:estr1} holds with an equality,
then this transferred to $E$ as well.
\end{proof}

As convex domains are star-shaped,
an immediate implication of the previous lemma
and Theorem \ref{theo:gaffneyconvexbd}
is the following main result of this section:

\begin{theo}[Gaffney's inequality for convex domains]
\label{theo:gaffneyconvex}
Let $\om\subset\rt$ be convex.
Then $\om$ is a Gaffney domain.
More precisely:
If $E\in\Hc(\rot,\om)\cap\H(\div,\om)$
or $E\in\H(\rot,\om)\cap\Hc(\div,\om)$,
then $E\in\H^{1}(\om)$ and 
$$\norm{\na E}_{\Lt(\om)}^{2}
\leq\norm{\rot E}_{\Lt(\om)}^{2}
+\norm{\div E}_{\Lt(\om)}^{2}.$$
\end{theo}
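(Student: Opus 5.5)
The plan is to apply the permanence principle, Lemma \ref{lem:approxG}, with the bounded result, Theorem \ref{theo:gaffneyconvexbd}, supplying its hypothesis. Let $\om\subset\rt$ be convex (open) and nonempty; the empty case is trivial. Pick any point $x_{\star}\in\om$. Since Lemma \ref{lem:approxG} needs a bounded star-shaped auxiliary set $\om^{\star}$ containing $x_{\star}$, I would take a small open ball $\om^{\star}\coloneqq B(x_{\star},\rho)$. It is open, bounded, convex, and star-shaped with respect to $x_{\star}$. Its dilates are $\om^{\star}_{r}=B(x_{\star},r\rho)$.

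One small point is the inclusion $\om^{\star}\subset\om$ required in the lemma. I would choose $\rho$ small enough that $B(x_{\star},\rho)\subset\om$, which is possible since $\om$ is open. Reading the proof of Lemma \ref{lem:approxG}, this inclusion is never actually used. Only the facts that $\om^{\star}_{r}$ exhausts $\rt$ as $r\to\infty$, and that the cut-offs $\varphi_{r}$ satisfy $|\na\varphi_{r}|\leq c/r$, enter the argument. So either reading of the hypothesis is satisfied.

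Next, I would verify the hypothesis that every $\om_{r}=\om^{\star}_{r}\cap\om=B(x_{\star},r\rho)\cap\om$ is a Gaffney domain. This set is an intersection of two convex open sets, hence convex and open. It is bounded because it lies in a ball, and it is nonempty because it contains $x_{\star}$. Theorem \ref{theo:gaffneyconvexbd} therefore shows that each $\om_{r}$ is a Gaffney domain. Lemma \ref{lem:approxG} then yields that $\om$ is a Gaffney domain. Unpacking the definition gives exactly the stated regularity $E\in\H^{1}(\om)$ and the inequality for $E\in\Hc(\rot,\om)\cap\H(\div,\om)$ or $E\in\H(\rot,\om)\cap\Hc(\div,\om)$.

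I expect no serious obstacle, since all the analytic work sits in Theorem \ref{theo:gaffneyconvexbd} and Lemma \ref{lem:approxG}. The only points to check are bookkeeping. First, the lemma's use of the ball in the statement must be compatible with a possibly unbounded $\om$; this is the case, because the balls exhaust $\rt$ and the monotone convergence step in the lemma's proof covers all of $\om$. Second, the unit-constant form of the inequality must survive the limit $r\to\infty$. It does, because the final step of Lemma \ref{lem:approxG} passes to the limit in \eqref{eq:estr1} itself, which has constant $1$, rather than in the estimate \eqref{eq:estr2} with generic constant $c$.
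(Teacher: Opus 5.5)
Your proposal is correct and is the paper's argument: the paper obtains the theorem by combining the permanence principle, Lemma \ref{lem:approxG}, with the bounded convex case, Theorem \ref{theo:gaffneyconvexbd}. Your explicit choice $\om^{\star}=B(x_{\star},\rho)$ makes the paper's brief remark ``convex domains are star-shaped'' precise, since it guarantees that every $\om_{r}=B(x_{\star},r\rho)\cap\om$ is bounded and convex, and that $\ol{\om^{\star}_{2r/3}}$ is compactly contained in $\om^{\star}_{r}$, as the lemma's proof uses.
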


Theorem \ref{theo:gaffneyconvex} implies results 
also for the harmonic Dirichlet fields $\calH_{\sfD}(\om)\coloneqq N(\rotc_{\om})\cap N(\div_{\om})$ 
and harmonic Neumann fields $\calH_{\sfN}(\om)\coloneqq N(\rot_{\om})\cap N(\divc_{\om})$
as well as for the Dirichlet and Neumann Laplacians.

\begin{cor}
\label{cor:harmfields}
If $\om\subset\rt$ is convex, then $\calH_{\sfD}(\om)$ and $\calH_{\sfN}(\om)$ are trivial.
\end{cor}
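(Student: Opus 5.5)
Take $E\in\calH_{\sfD}(\om)=N(\rotc_{\om})\cap N(\div_{\om})$. Then $E\in\Hc(\rot,\om)\cap\H(\div,\om)$, so Theorem \ref{theo:gaffneyconvex} applies. It gives $E\in\H^{1}(\om)$ and
$$\norm{\na E}_{\Lt(\om)}^{2}\leq\norm{\rot E}_{\Lt(\om)}^{2}+\norm{\div E}_{\Lt(\om)}^{2}=0.$$
Hence $\na E=0$, so every component $E_{j}$ is constant on each connected component of $\om$. Note that $\om$ is convex, hence connected (when nonempty), so $E$ is a constant vector field $E\equiv a\in\rt$ on $\om$. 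The same argument applies verbatim to $E\in\calH_{\sfN}(\om)=N(\rot_{\om})\cap N(\divc_{\om})\subset\H(\rot,\om)\cap\Hc(\div,\om)$, again yielding a constant field $E\equiv a$.

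It remains to show that $a=0$. If $\om$ is unbounded, then $|\om|=\infty$ because $\om$ is open, convex and unbounded: it contains a ball together with an unbounded ray, hence a cone-like set of infinite measure. Then $a\in\Lt(\om)$ forces $a=0$, and this settles both cases at once.

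If $\om$ is bounded, the boundary conditions must be used, and I expect this to be the main step. In the Dirichlet case, $E\equiv a\in\Hc(\rot,\om)$ means $\scp{a}{\rot\Psi}_{\Lt(\om)}=0$ for all $\Psi\in\H(\rot,\om)$, since $\rotc=\rot^{*}$ and $\rot a=0$. Choose $\Psi(x)\coloneqq\frac12\,b\times x$ for a fixed $b\in\rt$. This $\Psi$ is smooth and bounded on the bounded set $\om$, and $\rot\Psi=b$. We get $a\cdot b\,|\om|=0$ for every $b$, and since $|\om|>0$, it follows that $a=0$. In the Neumann case, $a\in\Hc(\div,\om)$ means $\scp{a}{\na u}_{\Lt(\om)}=-\scp{\div a}{u}_{\Lt(\om)}=0$ for all $u\in\H^{1}(\om)$, because $\divc=-\na^{*}$. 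Take $u(x)\coloneqq b\cdot x$, which lies in $\H^{1}(\om)$ as $\om$ is bounded. This gives $a\cdot b\,|\om|=0$ for all $b$, hence $a=0$.

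One point needs care: in the unbounded case the test fields $b\times x$ and $b\cdot x$ are not in $\Lt(\om)$. This is exactly why that case is handled separately, via integrability of constants on a set of infinite measure.
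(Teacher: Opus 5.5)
Your proof is correct and follows the paper's argument. Gaffney's inequality for convex domains (Theorem \ref{theo:gaffneyconvex}) gives $\na E=0$, so $E$ is constant on the connected set $\om$, and the constant is then shown to vanish. Your explicit test fields $\frac12\,b\times x$ and $b\cdot x$ in the bounded case, and the observation that an unbounded convex open set has infinite measure, fill in details the paper leaves implicit in ``the respective boundary condition implies $E=0$''; in particular, for $\om=\rt$ there is no boundary condition, and $\Lt$-integrability alone forces the constant to vanish.
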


\begin{proof}
Let $E\in\calH_{\sfD}(\om)\cup\calH_{\sfN}(\om)$.
Then $E$ is constant by Theorem \ref{theo:gaffneyconvex}.
In either case, the respective boundary condition implies $E=0$.
\end{proof}

\begin{cor}
\label{cor:gaffneyconvex}
Let $\om\subset\rt$ be convex.
Moreover, let $u\in\Hc^{1}(\om)$ with $\na u\in\H(\div,\om)$
or $u\in\H^{1}(\om)$ with $\na u\in\Hc(\div,\om)$.
Then $u\in\H^{2}(\om)$ and 
$$\norm{\na\na u}_{\Lt(\om)}
\leq\norm{\Delta u}_{\Lt(\om)}.$$
\end{cor}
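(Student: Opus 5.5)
The plan is to apply Theorem \ref{theo:gaffneyconvex} to the vector field $E\coloneqq\na u$. This uses the complex property $R(\nac)\subset N(\rotc)$ and $R(\na)\subset N(\rot)$. We split into the two cases of the statement.

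\emph{Case 1.} Let $u\in\Hc^{1}(\om)$ with $\na u\in\H(\div,\om)$. Then $E=\nac u\in\Hc(\rot,\om)$ with $\rot E=0$, since $\rotc\nac=0$. Indeed, for $u_{n}\in\Cic(\om)$ with $u_{n}\to u$ in $\H^{1}(\om)$, the fields $\na u_{n}$ are test fields with vanishing $\rot$ converging to $\na u$ in $\H(\rot,\om)$. Moreover $E\in\H(\div,\om)$ with $\div E=\Delta u$ by assumption. Hence $E\in\Hc(\rot,\om)\cap\H(\div,\om)$.

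\emph{Case 2.} Let $u\in\H^{1}(\om)$ with $\na u\in\Hc(\div,\om)$. Then $E=\na u\in\H(\rot,\om)$ with $\rot E=0$, since for $\Phi\in\Cic(\om)$ we have $\scp{\na u}{\rot\Phi}=-\scp{u}{\div\rot\Phi}=0$. Also $E\in\Hc(\div,\om)$ with $\div E=\Delta u$. Hence $E\in\H(\rot,\om)\cap\Hc(\div,\om)$.

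In either case Theorem \ref{theo:gaffneyconvex} gives $E\in\H^{1}(\om)$, i.e.\ every $\p_{j}u\in\H^{1}(\om)$. Together with $u\in\H^{1}(\om)$ this yields $u\in\H^{2}(\om)$, with $\na\na u=\na E$. The inequality then reads
$$\norm{\na\na u}_{\Lt(\om)}^{2}=\norm{\na E}_{\Lt(\om)}^{2}\leq\norm{\rot E}_{\Lt(\om)}^{2}+\norm{\div E}_{\Lt(\om)}^{2}=\norm{\Delta u}_{\Lt(\om)}^{2}.$$

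There is no real obstacle. The only point requiring care is verifying the boundary condition $\nac u\in\Hc(\rot,\om)$ in Case 1, which is the approximation argument above; everything else is bookkeeping with the complex property. Note that convexity, and not boundedness, is all that Theorem \ref{theo:gaffneyconvex} needs, so the argument covers unbounded convex $\om$.
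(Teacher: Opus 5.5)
Your proposal is correct and is the paper's proof. You set $E=\na u$, use the complex property to get $E\in\Hc(\rot,\om)\cap\H(\div,\om)$ or $E\in\H(\rot,\om)\cap\Hc(\div,\om)$ with $\rot E=0$ and $\div E=\Delta u$, and apply Theorem \ref{theo:gaffneyconvex}. Your explicit checks of $\rot\na u=0$ and of the tangential boundary condition in the first case simply spell out what the paper cites as the complex property.
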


\begin{proof}
We observe $\na u\in\H(\rot,\om)\cap\Hc(\div,\om)$
or $\na u\in\Hc(\rot,\om)\cap\H(\div,\om)$ by the complex property.
Theorem \ref{theo:gaffneyconvex}
shows the result by setting $E=\na u$.
\end{proof}

\subsection{Exact Gaffney Domains}
\label{sec:exactgaffneydom}

Although not being relevant for our results,
we note the following facts about exactness:
Using the particular integration by parts result for cubes,
Lemma \ref{lem:ibpcube},
which can be generalised to polyhedrons
with some additional technical and notational efforts,
and a sophisticated investigation of the 
surface differential operators
$\na_{\bot}$ and $\div_{\parallel}$
(continuous extensions of them),
it is possible to give proper meaning to the boundary term
$$\wt{\cI}_{\ga}(E,F)
=\scp{E_{\parallel}}{\na_{\bot}F_{\bot}}_{\Lt(\ga)}
-\scp{E_{\bot}}{\div_{\parallel}F_{\parallel}}_{\Lt(\ga)}$$
even for vector fields $E,F$ belonging merely to $\H^{1}(\om)$,
see, e.g., \cite{C1991a}.
More precisely:

\begin{lem}[{ \cite[Theorem 4.1]{C1991a}}]
\label{lem:costabel}
Let $\om$ be a bounded polyhedron,
and let $E,F\in\H^{1}(\om)$. Then
$$\scp{\na E}{\na F}_{\Lt(Q)}
=\scp{\rot E}{\rot F}_{\Lt(Q)}
+\scp{\div E}{\div F}_{\Lt(Q)}
+\wh{\cI}_{\ga}(E,F),$$
where
$$\wh{\cI}_{\ga}(E,F)
\coloneqq\scp{E_{\tan}}{\na_{\tan}F_{\nor}}_{\Lt(\ga)}
-\scp{E_{\nor}}{\div_{\tan}F_{\tan}}_{\Lt(\ga)}$$
is given as sum over all faces of $\ga$.
\end{lem}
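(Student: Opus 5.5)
The identity is first proved for smooth fields and then extended to $\H^{1}(\om)$ by density, with the main work being a correct interpretation of the boundary term on the faces of the polyhedron. For $E,F\in\Ci(\rt)$ we start from the pointwise key relation and \eqref{eq:ibp1}, so the defect $\cI_{\ga}(E,F)$ is a sum of face integrals. On each face $\ga_{j}$ the outward normal $\nu^{j}$ is constant, and we decompose $E=E_{\tan}+E_{\nor}\nu^{j}$ exactly as in the cube case (Lemma \ref{lem:ibpcube}). Since $\na\nu^{j}=0$ on each flat face, the curvature terms of \eqref{eq:ibpIga} disappear. On each face we are then left with
$$\int_{\ga_{j}}\Big(E_{\tan}\cdot\na_{\tan}F_{\nor}-E_{\nor}\div_{\tan}F_{\tan}\Big),$$
where $\na_{\tan}$ and $\div_{\tan}$ are the planar surface gradient and divergence on the face. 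This is the same algebra as in the proof of Lemma \ref{lem:ibpcube}, carried out in rotated coordinates adapted to each face. Summing over the faces gives the formula with $\wh{\cI}_{\ga}$ for smooth fields.

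We then extend to $E,F\in\H^{1}(\om)$ using the density of $\Ci(\rt)|_{\om}$ in $\H^{1}(\om)$, which holds because polyhedra are Lipschitz. The volume terms are continuous in $\H^{1}(\om)\times\H^{1}(\om)$, and hence so is the difference
$$\wh{\cI}_{\ga}(E,F)=\scp{\na E}{\na F}-\scp{\rot E}{\rot F}-\scp{\div E}{\div F}.$$
Thus $\wh{\cI}_{\ga}$ extends uniquely to a bounded bilinear form on $\H^{1}(\om)$. The real content of the statement is that this extension is still given by the face integrals, interpreted as dualities. On each face $E_{\tan}|_{\ga_{j}}\in\H^{1/2}(\ga_{j})$ and $F_{\nor}|_{\ga_{j}}\in\H^{1/2}(\ga_{j})$, so $\na_{\tan}F_{\nor}$ lies in $\H^{-1/2}$ on the face. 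The pairing $\scp{E_{\tan}}{\na_{\tan}F_{\nor}}$ then has to be understood in the $\tilde{\H}^{1/2}(\ga_{j})$–$\H^{-1/2}(\ga_{j})$ sense, or the two surface terms have to be combined.

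The \textbf{main obstacle} is the behaviour at the edges. Face traces of $\H^{1}$ functions lie in $\H^{1/2}(\ga_{j})$ but not in $\tilde{\H}^{1/2}(\ga_{j})$, so the individual face dualities are not obviously well defined. My first attempt is to integrate by parts on each face: $\scp{E_{\tan}}{\na_{\tan}F_{\nor}}_{\ga_{j}}$ produces $-\scp{\div_{\tan}E_{\tan}}{F_{\nor}}$ plus line integrals along the edges. I would then show that, after summing over adjacent faces, the edge contributions cancel. This cancellation rests on the compatibility conditions that $\H^{1}$ traces satisfy across an edge, namely the continuity of the full vector trace $E|_{\ga}$. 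Following \cite{C1991a}, I would make this rigorous by proving a uniform bound $|\wh{\cI}_{\ga}(E,F)|\le c\norm{E}_{\H^{1}(\om)}\norm{F}_{\H^{1}(\om)}$ for smooth fields, which follows from the volume representation above. A localisation near each edge and vertex, reducing to the model wedge and the model cone, then shows that the facewise expression agrees with the continuous extension. The computations for these models are the step I expect to require the most care.
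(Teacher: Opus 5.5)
\textbf{There is a genuine gap: the step you call the real content is only announced, and the route you sketch for it fails.}

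The paper gives no proof of this lemma. It quotes \cite[Theorem 4.1]{C1991a} and names only the ingredients you also use: the face-by-face version of Lemma \ref{lem:ibpcube} for smooth fields, and continuous extension of the surface operators. Your smooth-field computation and your density step are correct. However, they only prove the identity with $\wh{\cI}_{\ga}$ \emph{defined} as the continuous extension of the smooth face sum. Read that way, your first two paragraphs are already a complete but tautological proof. That reading also does not let one conclude what the paper uses next, namely $\wh{\cI}_{\ga}(E,F)=0$ under boundary conditions.

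Your proposed mechanism does not work, for three reasons.
\begin{itemize}
\item[\textbf{(a)}] Edge line integrals are meaningless for $\H^{1}(\om)$ fields, since functions in $\H^{1/2}(\ga_{j})$ have no traces on $\p\ga_{j}$. Integrating by parts on a face only moves the tangential derivative from one $\H^{1/2}(\ga_{j})$-trace to the other.
\item[\textbf{(b)}] Even for smooth fields the claimed cancellation is false. If you integrate only $\scp{E_{\tan}}{\na_{\tan}F_{\nor}}_{\ga_{j}}$ by parts, then along the edge $\{x_{1}=x_{2}=0\}$ of the unit cube the line integral of $E_{1}F_{2}+E_{2}F_{1}$ remains. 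The edge terms cancel only if both face terms are integrated by parts, and that just reproduces $\wh{\cI}_{\ga}(E,F)=\wh{\cI}_{\ga}(F,E)$.
\item[\textbf{(c)}] The target itself cannot be reached for general fields. Let $r\coloneqq(x_{1}^{2}+x_{2}^{2})^{1/2}$ and $0<\eps<1/2$. Put $\eta_{\eps}\coloneqq1$ on $[0,\eps]$, $\eta_{\eps}(r)\coloneqq\log(2r)/\log(2\eps)$ on $[\eps,1/2]$, and $\eta_{\eps}\coloneqq0$ beyond $1/2$. Fix $\chi\in\Cic(0,1)$ and set $E\coloneqq\eta_{\eps}(r)\chi(x_{3})e^{1}$ and $F\coloneqq\eta_{\eps}(r)\chi(x_{3})e^{2}$ on the unit cube. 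The logarithmic cut-off has vanishing Dirichlet energy in two dimensions, so $\norm{E}_{\H^{1}(Q)}=\norm{F}_{\H^{1}(Q)}\to0$. Yet for every $\eps$
\[
\int_{\ga_{2,-}}E_{\tan}\cdot\na_{\tan}F_{\nor}
=\tfrac12\norm{\chi}_{\Lt(0,1)}^{2}
=\int_{\ga_{1,-}}E_{\nor}\div_{\tan}F_{\tan}.
\]
So no single face term is continuous on $\H^{1}(\om)\times\H^{1}(\om)$; only their sum is. For general $E,F\in\H^{1}(\om)$ the face sum can only mean the continuous extension you already have. No wedge or cone computation can turn it into a sum of separately defined face dualities.
\end{itemize}

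\textbf{What is missing.} The facewise representation needs extra structure on one of the two fields. This is where the edge analysis of \cite{C1991a} enters.
\begin{itemize}
\item[\textbf{(i)}] For smooth $F$, every face term is an honest $\Lt(\ga_{j})$-integral, continuous in $E$ via the trace $\H^{1}(\om)\to\Lt(\ga)$. Hence the facewise formula holds for all $E\in\H^{1}(\om)$ and $F\in\Ci(\rt)$, with no edge analysis at all.
\item[\textbf{(ii)}] Now let $E\in\H^{1}(\om)\cap\Hc(\rot,\om)$, so $E_{\tan}=0$ on every face. The full trace lies in $\H^{1/2}(\ga)$ and faces meeting at an edge have non-parallel normals. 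The $\H^{1/2}(\ga)$ double integral across each edge therefore controls $\int_{\ga_{j}}|E_{\nor}|^{2}/\dist(\,\cdot\,,\p\ga_{j})$. That is, $E_{\nor}|_{\ga_{j}}\in\tilde{\H}^{1/2}(\ga_{j})$ with norm $\lesssim\norm{E}_{\H^{1}(\om)}$.
\item[\textbf{(iii)}] Consequently $\scp{E_{\nor}}{\div_{\tan}F_{\tan}}_{\ga_{j}}$ is a genuine $\tilde{\H}^{1/2}(\ga_{j})$--$\H^{-1/2}(\ga_{j})$ pairing, continuous in $F\in\H^{1}(\om)$. Density in $F$, starting from (i), gives $\wh{\cI}_{\ga}(E,F)=-\sum_{j}\scp{E_{\nor}}{\div_{\tan}F_{\tan}}_{\ga_{j}}$ for all $F\in\H^{1}(\om)$. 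This vanishes if also $F\in\Hc(\rot,\om)$.
\item[\textbf{(iv)}] For $\Hc(\div,\om)$ one argues in the same way with the first face term. Now the component of $E_{\tan}$ normal to an edge, within the face, vanishes there in the $\tilde{\H}^{1/2}$ sense.
\end{itemize}
This trace compatibility at edges and vertices, not a cancellation of edge integrals, is what gives the lemma the content the paper relies on.
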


It turns out that $\wh{\cI}_{\ga}(E,F)$ still vanishes,
if $E,F$ belong additionally also to $\Hc(\rot,\om)$ or $\Hc(\div,\om)$, 
in particular, for $E\in\H^{1}(\om)\cap\Hc(\rot,\om)$
or $E\in\H^{1}(\om)\cap\Hc(\div,\om)$ we obtain
$$\norm{\na E}_{\Lt(\om)}^{2}
=\norm{\rot E}_{\Lt(\om)}^{2}
+\norm{\div E}_{\Lt(\om)}^{2}.$$
Hence, bounded and convex polyhedrons are exact Gaffney domains.
Note that the convexity is still needed for the regularity part of the result.

For the cube there is another elementary way:
By the tensor structure of a cube $Q$
products of sine and cosine functions,
as eigenfunctions of Laplacians on the unit interval,
form a complete orthonormal system in $\Lt(Q)$
yielding dense subsets for the different boundary conditions.
This, together with the density lemma
(the abstract result Lemma \ref{lem:densityJukka})
shows that cubes are exact Gaffney domains.

Together with Lemma \ref{lem:approxG} we conclude:

\begin{cor}[exact Gaffney domains]
\label{cor:exactgaffney}
All (possibly unbounded) convex polyhedrons are exact Gaffney domains.
In particular, $\rt$, $\reals^{2}\times(0,1)$, and $\reals\times(0,1)^{2}$
are exact Gaffney domains.
\end{cor}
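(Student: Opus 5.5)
The plan is to reduce the corollary to the permanence principle, Lemma \ref{lem:approxG}, in its exact version, applied with bounded convex polyhedral truncations. Let $\om$ be a (possibly unbounded) convex polyhedron, i.e., a finite intersection of open half-spaces. Fix $x_{\star}\in\om$ and take $\om^{\star}$ to be an open cube centred at $x_{\star}$. Strictly speaking, Lemma \ref{lem:approxG} as stated asks $\om^{\star}\subset\om$; what its proof actually uses is only that $\om_{r}=\om^{\star}_{r}\cap\om$ exhausts $\om$ as $r\to\infty$ and that the cut-offs $\varphi_{r}$ satisfy $|\na\varphi_{r}|\le c/r$. Both hold for any bounded star-shaped $\om^{\star}$ with centre $x_{\star}\in\om$. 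I would state this remark explicitly and then apply the lemma with this cube $\om^{\star}$.

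Each $\om_{r}=\om^{\star}_{r}\cap\om$ is a bounded intersection of finitely many half-spaces, so it is a bounded convex polyhedron. By the previous subsection, bounded convex polyhedra are exact Gaffney domains. The regularity part (i) comes from Theorem \ref{theo:gaffneyconvexbd}, since $\om_{r}$ is bounded and convex. The equality in (ii) comes from Lemma \ref{lem:costabel}: for $E\in\H^{1}(\om_{r})\cap\Hc(\rot,\om_{r})$ or $E\in\H^{1}(\om_{r})\cap\Hc(\div,\om_{r})$, the face-wise boundary term $\wh{\cI}_{\ga}(E,E)$ vanishes, because on each face either $E_{\tan}=0$ or $E_{\nor}=0$. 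The exact version of Lemma \ref{lem:approxG} then yields that $\om$ is an exact Gaffney domain.

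For the special cases, note that $\rt$, $\reals^{2}\times(0,1)$ and $\reals\times(0,1)^{2}$ are convex polyhedra: they are intersections of $0$, $2$ and $4$ half-spaces, respectively. Truncating with cubes $\om^{\star}_{r}$ gives cuboids, so in these cases one can even bypass Lemma \ref{lem:costabel} and use instead the sine/cosine argument for cuboids via Lemma \ref{lem:densityJukka}, which is the elementary route mentioned above. Concretely, for $E\in\H^{1}\cap\Hc(\rot)$ one checks that in Lemma \ref{lem:ibpcube} the term $\wt{\cI}_{\ga}$ vanishes: on $\ga_{k,\pm}$ we have $E_{\parallel}=0$, and $\div_{\parallel}E_{\parallel}$ involves only tangential derivatives of vanishing components. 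This holds first for the dense trigonometric fields and then extends by density.

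The main obstacle I expect is justifying that $\wh{\cI}_{\ga}$ vanishes for merely $\H^{1}$ fields with homogeneous boundary conditions. The difficulty is at edges and corners, where the face-wise traces of $\na_{\tan}E_{\nor}$ and $\div_{\tan}E_{\tan}$ live only in dual spaces. I would handle this by density: approximate within $\H^{1}(\om_{r})\cap\Hc(\rot,\om_{r})$ by fields that are smooth up to each face. For cuboids this approximation is available explicitly through the trigonometric basis together with Lemma \ref{lem:densityJukka}(ii), which preserves the boundary condition. For general polyhedra I would rely on \cite{C1991a}.
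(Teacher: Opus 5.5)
Your proposal is correct and takes the paper's route. You truncate by cubes to obtain bounded convex polyhedra, which are exact Gaffney domains: regularity comes from Theorem \ref{theo:gaffneyconvexbd}, and the equality from Lemma \ref{lem:costabel}, or, for cuboids, from the trigonometric basis together with Lemma \ref{lem:densityJukka}. Lemma \ref{lem:approxG} then transfers exactness to the unbounded polyhedron. Your caveat about the hypothesis $\om^{\star}\subset\om$ is unnecessary, since you can simply choose the cube centred at $x_{\star}$ small enough to lie inside the open set $\om$.
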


\section{Closed Range Results for Cuboids}
\label{sec:clrancubes}

We finally turn to closed range results using Gaffney's inequality. 
In particular, we will carry out the strategy sketched in the introduction. 
Here we focus on cuboids and discuss the validity of a Friedrichs type estimate first.


Let
$-\infty\leq a_{j}<b_{j}\leq\infty$ for $j\in\{1,2,3\}$ and define
$$I_{j}\coloneqq(a_{j},b_{j})\subset\reals
\qquad
\ell_{j}\coloneqq b_{j}-a_{j},
\qquad
c_{j}\coloneqq\frac{\ell_{j}}{\sqrt{2}},$$
together with the (possibly infinite) \textbf{cuboids}  
\begin{equation}
\label{eq:q}
Q\coloneqq I_{1}\times I_{2}\times I_{3}\subset\rt.
\end{equation}
For $Q$ we define the \textbf{number of directions of boundedness}
$$\db_{Q}\coloneqq 
\#\big\{j\in\{1,2,3\}:\ell_{j}<\infty\big\}$$
with the usual convention $\infty-(-\infty)=\infty$.  

Instrumental in the proof of the closed range result is the following variant of Friedrichs' estimate.
For $\db_{Q}\geq1$ we have:

\begin{lem}[Friedrichs estimate]
\label{lem:friedrichs}
Let $-\infty<a_{3}<b_{3}<\infty$ and $u\in\H^{1}(Q)$ 
with $u|_{I_{1}\times I_{2}\times\{a_{3}\}}=0$. 
Then 
$$\norm{u}_{\Lt(Q)}\leq 
c_{3}\norm{\p_{3}u}_{\Lt(Q)}\leq 
c_{3}\norm{\na u}_{\Lt(Q)}.$$
\end{lem}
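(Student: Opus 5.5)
The plan is to reduce the estimate to a one-dimensional Friedrichs inequality on $I_{3}=(a_{3},b_{3})$ and then integrate over $I_{1}\times I_{2}$ with Fubini. The constant $c_{3}=\ell_{3}/\sqrt{2}$ is exactly what the elementary Cauchy--Schwarz argument produces, so no optimal-constant (sine eigenfunction) argument is needed.

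First I work with smooth functions. Let $u\in\Ci(\ol{Q})\cap\H^{1}(Q)$ vanish on the face $I_{1}\times I_{2}\times\{a_{3}\}$; more conveniently, take $u$ smooth with support away from that face near $x_{3}=a_{3}$. For fixed $(x_{1},x_{2})$ and $t\in I_{3}$ write
$$u(x_{1},x_{2},t)=\int_{a_{3}}^{t}\p_{3}u(x_{1},x_{2},s)\,ds.$$
Cauchy--Schwarz gives $|u(x_{1},x_{2},t)|^{2}\leq(t-a_{3})\int_{I_{3}}|\p_{3}u(x_{1},x_{2},s)|^{2}\,ds$. Integrating in $t$ over $I_{3}$ yields the factor $\int_{a_{3}}^{b_{3}}(t-a_{3})\,dt=\ell_{3}^{2}/2=c_{3}^{2}$. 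Integrating over $(x_{1},x_{2})\in I_{1}\times I_{2}$ (Fubini/Tonelli, since everything is nonnegative and the intervals $I_{1},I_{2}$ may be unbounded) then gives $\norm{u}_{\Lt(Q)}^{2}\leq c_{3}^{2}\norm{\p_{3}u}_{\Lt(Q)}^{2}$. The second inequality is trivial, because $|\p_{3}u|\leq|\na u|$ pointwise.

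Second, I pass to general $u$ by density. The step requiring care is making precise the meaning of $u|_{I_{1}\times I_{2}\times\{a_{3}\}}=0$ on a possibly unbounded cuboid, and approximating accordingly.

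I would avoid density altogether by using the absolutely continuous representative. For $u\in\H^{1}(Q)$, Fubini shows that for a.e.\ $(x_{1},x_{2})$ the function $u(x_{1},x_{2},\cdot)$ lies in $\H^{1}(I_{3})$, with weak derivative $\p_{3}u(x_{1},x_{2},\cdot)$. Such a function has an absolutely continuous representative on $[a_{3},b_{3}]$, since $I_{3}$ is bounded. The trace on the face agrees a.e.\ with the value of this representative at $a_{3}$; this can be seen by approximating $u$ with $\Ci(\ol{Q})$ functions, or by taking the boundary condition to mean exactly this. Hence the one-dimensional fundamental theorem of calculus holds for a.e.\ $(x_{1},x_{2})$, and the computation above goes through verbatim.

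Alternatively, if the boundary condition is understood as $u$ lying in the $\H^{1}$-closure of smooth functions vanishing near that face, the smooth case extends by continuity of both sides of the inequality in $\H^{1}(Q)$.
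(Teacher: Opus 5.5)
Your proposal is correct and follows the paper's proof: the fundamental theorem of calculus in $x_{3}$ from the face $x_{3}=a_{3}$, Cauchy--Schwarz giving the factor $\int_{a_{3}}^{b_{3}}(t-a_{3})\,dt=\ell_{3}^{2}/2=c_{3}^{2}$, then integration over $I_{1}\times I_{2}$, with a density argument (the paper uses $\Cic(\reals^{2}\times(a_{3},\infty))$) to reach general $u$. Your alternative of working with the absolutely continuous representative on almost every line in $x_{3}$ is an equally valid way to handle the boundary condition and avoids the density step.
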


\begin{proof}
By a density argument, it suffices to establish the inequality for
$u\in\Cic\big(\reals^{2}\times(a_{3},\infty)\big)$.
Then
$$u(x_{1},x_{2},x_{3})=\int_{a_{3}}^{x_{3}}\p_{3}u(x_{1},x_{2},\,\cdot\,),
\qquad
x_{j}\in I_{j}.$$
Thus 
$\displaystyle\big|u(x_{1},x_{2},x_{3})\big|^{2}\leq
(x_{3}-a_{3})\int_{a_{3}}^{b_{3}}\big|\p_{3}u(x_{1},x_{2},\,\cdot\,)\big|^{2}$,
which implies 
\begin{align}
\label{eq:friedrichs1d}
\int_{a_{3}}^{b_{3}}\big|u(x_{1},x_{2},\,\cdot\,)\big|^{2}\leq
\frac{\ell_{3}^{2}}{2}\int_{a_{3}}^{b_{3}}\big|\p_{3}u(x_{1},x_{2},\,\cdot\,)\big|^{2}.
\end{align}
Integration over $I_{1}$, $I_{2}$ shows 
$\norm{u}_{\Lt(Q)}^{2}\leq c_{3}^{2}\norm{\p_{3}u}_{\Lt(Q)}^{2}$.
\end{proof}

Since our aim is to characterise closed range results in terms of directions of boundedness, 
we will also provide statements, when the range is not closed. 
The key for this line of arguments will be explicit constructions showing 
that a closed range inequality cannot hold. 
We will frequently use the following family of functions. For 
$n\in\n$ let $f_{n}\in\H^{1}(\reals)$ be given by
\begin{equation}
\label{eq:fn}
f_{n}(t)
\coloneqq
\begin{cases} 
t-1&\text{, if }1\leq t<2,\\
1&\text{, if }2\leq t<n,\\
1+n-t&\text{, if }n\leq t<n+1,\\
0&,\text{ else}.
\end{cases}
\end{equation}


%

\begin{lem}
\label{lem:notclosed}
Let $Q$ be as in \eqref{eq:q},
and let $\db_{Q}=0$. 
Then $R(\nac_{Q})$ and $R(\na_{Q})$ are not closed.
\end{lem}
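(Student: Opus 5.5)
With $\db_{Q}=0$ every $I_{j}$ is unbounded, so $Q$ contains a translate of $(0,\infty)^{3}$. By translation and reflection we may assume $(0,\infty)^{3}\subset Q$. We argue via Theorem \ref{theo:fatblem1}. For $R(\nac_{Q})$ we build functions $u_{n}\in\Hc^{1}(Q)$ with
$$\norm{u_{n}}_{\Lt(Q)}/\norm{\na u_{n}}_{\Lt(Q)}\to\infty.$$
Since $\Lt(Q)$ contains no nonzero constants when $Q$ is unbounded, we have $N(\nac_{Q})=\{0\}$. So the closed range inequality would have to hold for all of $D(\nac_{Q})$, and such a sequence contradicts it.

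The natural choice is the tensor product
$$u_{n}(x)\coloneqq f_{n}(x_{1})f_{n}(x_{2})f_{n}(x_{3}),$$
with $f_{n}$ from \eqref{eq:fn}. It is Lipschitz with compact support in $[1,n+1]^{3}\subset(0,\infty)^{3}\subset Q$, hence $u_{n}\in\Hc^{1}(Q)$. From
$$\norm{f_{n}}_{\Lt(\reals)}^{2}=n-2+\tfrac{2}{3}\sim n,
\qquad
\norm{f_{n}'}_{\Lt(\reals)}^{2}=2,$$
we get
$$\norm{u_{n}}_{\Lt(Q)}^{2}\sim n^{3},
\qquad
\norm{\na u_{n}}_{\Lt(Q)}^{2}=3\cdot2\cdot\norm{f_{n}}_{\Lt(\reals)}^{4}\sim n^{2}.$$
The ratio therefore behaves like $\sqrt{n}\to\infty$, so $R(\nac_{Q})$ is not closed.

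For $R(\na_{Q})$ there is one extra point: $u_{n}$ must lie in $N(\na_{Q})^{\bot}$. On each connected component $N(\na_{Q})$ consists of constants, and $Q$ is connected. Since $Q$ is unbounded, no nonzero constant lies in $\Lt(Q)$, so $N(\na_{Q})=\{0\}$ and the orthogonality is automatic. Also $u_{n}\in D(\nac_{Q})\subset D(\na_{Q})$ with the same gradient. Hence the same sequence violates the closed range inequality for $\na_{Q}$.

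Alternatively, Theorem \ref{theo:bcrt} gives that $R(\na_{Q})$ is closed if and only if $R(\divc_{Q})$ is closed. We do not need this, because the direct argument above covers both operators.

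The only step needing care is the kernel identification, namely that $N(\na_{Q})$ and $N(\nac_{Q})$ are trivial. This rests on $Q$ being connected with infinite measure.
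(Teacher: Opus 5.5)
Your proposal is correct and is essentially the paper's own proof. It uses the same tensor-product functions $u_{n}(x)=f_{n}(x_{1})f_{n}(x_{2})f_{n}(x_{3})$ supported in an octant of $Q$, the same observation $N(\nac_{Q})=N(\na_{Q})=\{0\}$, and the same growth rates $\norm{u_{n}}_{\Lt(Q)}^{2}\sim n^{3}$ versus $\norm{\na u_{n}}_{\Lt(Q)}^{2}\sim n^{2}$ to violate the closed range inequality of Theorem \ref{theo:fatblem1} for both operators; your explicit values $\norm{f_{n}}_{\Lt(\reals)}^{2}=n-\tfrac{4}{3}$ and $\norm{f_{n}'}_{\Lt(\reals)}^{2}=2$ simply make the paper's asymptotic statements precise.
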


\begin{proof}
Note that $N(\nac_{Q})=N(\na_{Q})=\{0\}$.
Without loss of generality, let
$\ell_{j}=b_{j}=\infty$ and $a_{j}\in\{-\infty,0\}$
for all $j\in\{1,2,3\}$.
We define $u_{n}\in\H^{1}(\rt)$ 
by $u_{n}(x)\coloneqq f_{n}(x_{1})f_{n}(x_{2})f_{n}(x_{3})$ 
with $f_{n}$ from \eqref{eq:fn}.
Then $u_{n}\in\Hc^{1}(Q)$ and 
$\norm{u_{n}}_{\Lt(Q)}^{2}\sim n^{3}$  
and $\norm{\na u_{n}}_{\Lt(Q)}^{2}\sim n^{2}$. 
Thus, a Friedrichs type estimate cannot hold
for $\nac$ or $\na$,
and $R(\nac_{Q})$ and $R(\na_{Q})$ are not closed.
\end{proof}

\begin{theo}[closed range of the gradient]
\label{theo:closedrangegrad} 
Let $Q$ be as in \eqref{eq:q}. 
Then:
$$R(\nac_{Q})\text{ closed}
\qequi
R(\div_{Q})\text{ closed}
\qequi 
\db_{Q}\geq1$$
\end{theo}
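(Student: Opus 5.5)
The first equivalence is immediate from Banach's closed range theorem (Theorem \ref{theo:bcrt}), since $\nac_{Q}^{*}=-\div_{Q}$. It therefore remains to show that $R(\nac_{Q})$ is closed if and only if $\db_{Q}\geq1$.

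For the implication ``$\Leftarrow$'', let $\db_{Q}\geq1$ and, after relabelling coordinates, assume $\ell_{3}<\infty$. Since $N(\nac_{Q})=\{0\}$ (an $\Hc^{1}$ function with vanishing gradient is locally constant, and it vanishes after extension by zero to $\rt$), Theorem \ref{theo:fatblem1} reduces the claim to the estimate $\norm{u}_{\Lt(Q)}\leq c\norm{\na u}_{\Lt(Q)}$ for all $u\in\Hc^{1}(Q)$. I would obtain this from Lemma \ref{lem:friedrichs}. Elements of $\Hc^{1}(Q)$ are $\H^{1}(Q)$-limits of functions in $\Cic(Q)$, and these vanish near $I_{1}\times I_{2}\times\{a_{3}\}$, so the hypothesis of Lemma \ref{lem:friedrichs} holds. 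Alternatively, the one-dimensional argument in its proof applies verbatim to $\Cic(Q)$, and density finishes the step. This gives the estimate with $c=c_{3}$.

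For ``$\Rightarrow$'' I argue by contraposition. If $\db_{Q}=0$, then Lemma \ref{lem:notclosed} already shows that $R(\nac_{Q})$ is not closed. Explicitly, $u_{n}=f_{n}\otimes f_{n}\otimes f_{n}$ lies in $\Hc^{1}(Q)=N(\nac_{Q})^{\bot}\cap D(\nac_{Q})$ after translating so that $Q$ contains the positive octant. Moreover, $\norm{u_{n}}^{2}\sim n^{3}$ while $\norm{\na u_{n}}^{2}\sim n^{2}$, so the closed range inequality of Theorem \ref{theo:fatblem1} fails.

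The only point needing some care is the trace and boundary condition in the Friedrichs step. I handle it by working directly with the dense subspace $\Cic(Q)$ and then passing to the closure in the $\H^{1}$-norm, where both sides of the inequality are continuous. The rest of the argument is routine.
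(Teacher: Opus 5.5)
Your proposal is correct and follows the paper's proof exactly: Banach's closed range theorem gives the first equivalence, Lemma \ref{lem:friedrichs} (applied to $\Hc^{1}(Q)$ via density of $\Cic(Q)$) gives $\db_{Q}\geq1\Rightarrow$ closed range, and Lemma \ref{lem:notclosed} gives the converse. One small slip: if some $I_{j}$ is of the form $(-\infty,b_{j})$, a translation alone does not place the support of $u_{n}$ inside $Q$; you also need a reflection in that coordinate, which is what the paper's ``without loss of generality'' covers.
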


\begin{proof}
Banach's closed range theorem shows the first equivalence,
and Lemma \ref{lem:friedrichs} proves the main implication.
If $R(\nac_{Q})$ is closed,
then $\db_{Q}=0$ cannot hold by Lemma \ref{lem:notclosed}. 
\end{proof}

Thus, it suffices to consider $\rotc$ and $\divc$ in the following.

\begin{theo}[closed range of the rotation]
\label{theo:closedrangerot}
Let $Q$ be as in \eqref{eq:q}. 
Then:
$$R(\rotc_{Q})\text{ closed}
\qequi
R(\rot_{Q})\text{ closed}
\qequi
\db_{Q}\geq2$$
\end{theo}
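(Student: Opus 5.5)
The first equivalence is Banach's closed range theorem (Theorem \ref{theo:bcrt}), since $\rotc_{Q}^{*}=\rot_{Q}$. For the implication ``$\db_{Q}\geq2\Rightarrow R(\rotc_{Q})$ closed'' we follow the strategy (i)--(vi) of the introduction. Without loss of generality, after permuting coordinates, $\ell_{1},\ell_{2}<\infty$. Since $Q$ is convex, Theorem \ref{theo:gaffneyconvex} shows that $Q$ is a Gaffney domain. So every $E\in D(\rotc_{Q})\cap N(\rotc_{Q})^{\bot}\subset\Hc(\rot,Q)\cap N(\div_{Q})$ lies in $\H^{1}(Q)$ and satisfies $\norm{\na E}_{\Lt(Q)}\leq\norm{\rot E}_{\Lt(Q)}$.

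The key step is (v): we must check that each component $E_{j}$ vanishes on a face where Lemma \ref{lem:friedrichs} applies. The tangential condition $\nu\times E=0$ on the faces $\{x_{1}=a_{1}\}$ kills $E_{2},E_{3}$ there, and on $\{x_{2}=a_{2}\}$ it kills $E_{1},E_{3}$. Hence $E_{3}$ and $E_{2}$ vanish on an $x_{1}$-face, and $E_{1}$ vanishes on an $x_{2}$-face. Lemma \ref{lem:friedrichs}, applied in direction $1$ for $E_{2},E_{3}$ and in direction $2$ for $E_{1}$, then gives $\norm{E}_{\Lt(Q)}\leq\max(c_{1},c_{2})\norm{\na E}_{\Lt(Q)}\leq\max(c_{1},c_{2})\norm{\rot E}_{\Lt(Q)}$, and Theorem \ref{theo:fatblem1} concludes.

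To justify these traces rigorously for $E\in\H^{1}(Q)\cap\Hc(\rot,Q)$, I would argue with test fields. For $\Psi=\psi e^{k}$ with $\psi$ smooth and supported near the face, the identity $\scp{E}{\rot\Psi}=\scp{\rot E}{\Psi}$ combined with componentwise integration by parts in $\H^{1}$ yields $\int_{\text{face}}(\nu\times E)\cdot\Psi=0$. This is the expected main technical obstacle, since the trace of $\H^{1}$ functions on faces of an unbounded cuboid has to be handled. Localising with cut-offs reduces it to the bounded Lipschitz case, and Lemma \ref{lem:friedrichs} was already stated for $\H^{1}$ functions with vanishing trace.

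For the converse, suppose $\db_{Q}\leq1$, say $\ell_{2}=\ell_{3}=\infty$, and normalise so that $I_{2},I_{3}\supset(0,\infty)$ and $I_{1}\supset(\alpha,\beta)$ with $\alpha<\beta$ finite. I would construct $E_{n}=\na u_{n}^{\perp}$-type counterexamples. A cleaner choice is the field $E_{n}\coloneqq\rot(\phi\, e^{1})$ with $\phi(x)=g(x_{1})f_{n}(x_{2})f_{n}(x_{3})$, where $g\in\Cic(\alpha,\beta)$ and $f_{n}$ is from \eqref{eq:fn}. Then $E_{n}\in R(\rotc_{Q})\subset N(\rotc_{Q})^{\bot}$ by the complex property ($R(\rotc)\perp N(\rot)\supset$\dots). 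More precisely, $R(\rotc)\subset\ol{R(\rotc)}=N(\rot)^{\bot}$, which is not the needed space, so I would instead use the adjoint form: show $R(\rot_{Q})$ is not closed via the closed range inequality for $\rot$ on $N(\rot)^{\bot}\supset R(\rotc)$. Since $E_{n}\in R(\rotc_{Q})\subset D(\rot_{Q})\cap N(\rot_{Q})^{\bot}$, it suffices to check $\norm{E_{n}}^{2}\sim n$ while $\norm{\rot E_{n}}^{2}$ stays bounded. Here $E_{n}=(0,\,g f_{n}f_{n}',\,-g f_{n}'f_{n})$ up to ordering, so $\norm{E_{n}}^{2}\sim n$, because one factor carries $f_{n}'$ (supported on a set of length $2$) and the other factor has $\int f_{n}^{2}\sim n$. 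The field $\rot E_{n}=\rot\rot(\phi e^{1})=\na\p_{1}\phi-\Delta\phi\, e^{1}$ contains the terms $g''f_{n}f_{n}$ of size $n^{2}$, which is bad.

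So instead I would choose $\phi$ so that $\Delta\phi$ is small. Taking $g$ to be a Dirichlet eigenfunction does not help. The better choice is to put the bounded direction into the potential's component: $E_{n}=\rot(\phi e^{3})$ with $\phi=g(x_{1})f_{n}(x_{2})h(x_{3})$, aiming for $\norm{E_{n}}^{2}\sim n$ and $\norm{\rot E_{n}}^{2}\lesssim1$. Tuning these growth rates (using only one long direction, and the freedom of two unbounded directions to make the second derivatives of $\phi$ in the long directions negligible) is the genuine obstacle. I expect to fiddle with which factors are stretched (for instance, stretching $x_{2}$ and $x_{3}$ by $n$ so that $\norm{E}^{2}\sim n^{2}\cdot n^{-2}\cdot$\dots) until the ratio $\norm{E_{n}}/\norm{\rot E_{n}}$ blows up, in analogy with Lemma \ref{lem:notclosed}.
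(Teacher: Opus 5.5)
Your first equivalence and the implication $\db_{Q}\geq2\Rightarrow R(\rotc_{Q})$ closed are the paper's argument: Gaffney's inequality on the convex cuboid, vanishing tangential traces on two faces, Lemma \ref{lem:friedrichs} componentwise, and Theorem \ref{theo:fatblem1}. Only the labelling of the bounded directions differs. The converse, however, is not proved. You never exhibit a sequence that violates the closed range inequality, and both candidates you write down fail.

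Your diagnosis of the first candidate is wrong. In $\rot\rot(\phi e^{1})=\na\p_{1}\phi-\Delta\phi\,e^{1}$ the $g''$-terms cancel in the first component. The real obstruction is that $E_{n}=(0,\p_{3}\phi,-\p_{2}\phi)$ lives only in the unit-width layers where $f_{n}'\neq0$. So does $\rot E_{n}$, through the terms $g'f_{n}'f_{n}$ and $gf_{n}''f_{n}$. Hence $\norm{E_{n}}^{2}$ and $\norm{\rot E_{n}}^{2}$ are both of order $n$. With the piecewise linear $f_{n}$ from \eqref{eq:fn}, $\rot E_{n}$ is not even in $\Lt(Q)$.

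The second candidate $\rot(\phi e^{3})$, $\phi=g(x_{1})f_{n}(x_{2})h(x_{3})$, fails for the same kind of reason. Here $g$ must vanish on the $x_{1}$-faces for $\phi e^{3}\in\Hc(\rot,Q)$. Then $\p_{1}\phi=g'f_{n}h$ in $E_{n}$ and $\p_{1}^{2}\phi=g''f_{n}h$ in $\rot E_{n}$ both have squared norm of order $n$. The stretching you gesture at is never carried out, and combined with a fixed cut-off $g$ it would not help either.

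The missing idea is that the test field must fill a large bulk region while its curl is confined to the transition layers. The paper gets this by using $N(\rotc_{Q})^{\bot}=\ol{R(\rot_{Q})}=N(\div_{Q})$, which holds because $\calH_{\sfD}(Q)=\{0\}$ for convex $Q$ (Corollary \ref{cor:harmfields}). Then every divergence-free field in $\Hc(\rot,Q)$ is admissible. In your coordinates with $\ell_{1}<\infty$, take $E_{n}=f_{n}(x_{2})f_{n}(x_{3})e^{1}$: it has $\norm{E_{n}}^{2}\sim n^{2}$ but $\norm{\rot E_{n}}^{2}\sim n$. For $\db_{Q}=0$ the paper uses the swirl field $f_{n}(x_{3})f_{n}(|x'|)(x_{2},-x_{1},0)^{\top}$, with squared norms of order $n^{5}$ versus $n^{4}$.

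If you prefer to stay with potentials in $R(\rotc_{Q})\subset N(\rot_{Q})^{\bot}$, which avoids the harmonic-field result, you must dilate instead of using fixed-width layers. For $\ell_{1}<\infty$ take $\phi_{n}(x)=G(x_{2}/n,x_{3}/n)$ with $0\neq G\in\Cic\big((1,2)^{2}\big)$ and no cut-off in $x_{1}$. Since $e^{1}$ is normal to the $x_{1}$-faces, $\phi_{n}e^{1}\in\Hc(\rot,Q)$. Then $\rot E_{n}=-(\p_{2}^{2}+\p_{3}^{2})\phi_{n}\,e^{1}$ and $\norm{E_{n}}^{2}/\norm{\rot E_{n}}^{2}\sim n^{2}$. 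For $\db_{Q}=0$ take $\phi_{n}(x)=\Phi(x/n)$ with $0\neq\Phi\in\Cic\big((1,2)^{3}\big)$, which gives the same ratio.
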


\begin{proof}
Again, the closed range theorem yields the first equivalence. 
In order to show a closed range result for $\rotc$, by Theorem \ref{theo:fatblem1}, 
it suffices to find $c>0$ such that for all $E\in D(\rotc)\cap N(\rotc)^\bot$
$$\norm{E}_{\Lt(Q)}\leq 
c\norm{\rotc E}_{\Lt(Q)}.$$

Thus, let $E\in D(\rotc)\cap N(\rotc)^\bot$. 
As 
$N(\rotc)^\bot
=\ol{R(\rot)}
\subset N(\div)
\subset D(\div)$, 
we deduce that $E\in D(\rotc)\cap N(\div)$. 
By Theorem \ref{theo:gaffneyconvex} we infer $E\in\H^{1}(Q)$ and
\begin{equation}
\label{eq:gaffred}
\norm{\na E}_{\Lt(Q)}
\leq\big(\norm{\rotc E}_{\Lt(Q)}^{2} 
+\norm{\div E}_{\Lt(Q)}^{2}\big)^{1/2}
=\norm{\rotc E}_{\Lt(Q)}.
\end{equation}

Next, if $\db_{Q}\geq 2$, we may assume without loss of generality, that $\ell_{2},\ell_{3}<\infty$. 
We note that $E\in\Hc(\rot,Q)\cap\H^{1}(Q)$.
Hence, we may evaluate $E$ at the boundary to deduce 
$E_{1}=E_{2}=0$ on $I_{1}\times I_{2}\times\{a_{3}\}$ and 
$E_{1}=E_{3}=0$ on  $I_{1}\times\{a_{2}\}\times I_{3}$. 
Lemma \ref{lem:friedrichs} shows
\begin{align*}
\norm{E_{1}}_{\Lt(Q)}
&\leq 
c_{3}\norm{\na E_{1}}_{\Lt(Q)},
&
\norm{E_{3}}_{\Lt(Q)}
&\leq 
c_{2}\norm{\na E_{3}}_{\Lt(Q)},\\
\norm{E_{2}}_{\Lt(Q)}
&\leq 
c_{3}\norm{\na E_{2}}_{\Lt(Q)},
\end{align*}
and thus
$\norm{E}_{\Lt(Q)}\leq 
\max\{c_{2},c_{3}\}\norm{\na E}_{\Lt(Q)}$.
Finally, \eqref{eq:gaffred} yields the closed range estimate for $\rotc$,
which completes the main part of the proof.

For the remaining part, let $\db_{Q}<2$, and, 
without loss of generality, $\ell_{1}=\ell_{2}=b_{1}=b_{2}=\infty$
and $a_{1},a_{2}\in\{-\infty,0\}$.
Note that, due to the Helmholtz decomposition we have
\begin{align}
\label{eq:helmnoharm}
N(\rotc)^\bot
=\ol{R(\rot)}
=N(\div)\oplus\calH_{\sfD}(Q)
=N(\div),
\end{align}
by Corollary \ref{cor:harmfields}, as $Q$ is convex.
Hence, to contradict the closed range, we shall define sequences 
$$(E_{n})\subset\H^{1}(Q)\cap D(\rotc)\cap N(\div)$$
for either special case.

Let $\db_{Q}=1$, i.e., $\ell_{3}<\infty$. 
We define $(E_{n})$ by $E_{n}(x)\coloneqq f_{n}(x_{1})f_{n}(x_{2})e^{3}$ 
with $f_{n}$ as in \eqref{eq:fn}
and the third unit vector $e^{3}\in\rt$. 
Then $(E_{n})$ is admissible with
$\norm{E_{n}}_{\Lt(Q)}^{2}\sim n^{2}$
and $\norm{\rotc E_{n}}_{\Lt(Q)}^{2}\sim n$.

Let $\db_{Q}=0$, i.e., $\ell_{3}=b_{3}=\infty$ and $a_{3}\in\{-\infty,0\}$.
With $f_{n}$ from \eqref{eq:fn}
we choose $(E_{n})$ by 
$$E_{n}(x)
\coloneqq f_{n}(x_{3})
f_{n}\big(|x'|\big)
\begin{bmatrix}
x_{2}\\
-x_{1}\\
0
\end{bmatrix},
\qquad
x':=
\begin{bmatrix}
x_{1}\\
x_{2}
\end{bmatrix}.$$
Then $(E_{n})$ is admissible with
$$\rot E_{n}(x)=
f_{n}'(x_{3})
f_{n}\big(|x'|\big)
\begin{bmatrix}
x'\\
0
\end{bmatrix}
-f_{n}(x_{3})
\Big(2f_{n}\big(|x'|\big)
+|x'|f_{n}'\big(|x'|\big)\Big)
e^{3}.$$
Hence, using cylinder coordinates, we have
\begin{align*}
\norm{E_{n}}_{\Lt(Q)}^{2}
&=\int_{Q}f_{n}^{2}(x_{3})f_{n}^{2}\big(|x'|\big)|x'|^{2}
\geq\frac{\pi}{2}(n-2)\int_{1}^{n+1}r^{3}f_{n}^{2}(r)\,dr
\geq\frac{\pi}{4}n\int_{2}^{n}r^{3}\,dr
\sim n^{5},\\
\norm{\rot E_{n}}_{\Lt(Q)}^{2}
&\leq16\Big(
\int_{Q}(f_{n}')^{2}(x_{3})f_{n}^{2}\big(|x'|\big)|x'|^{2}
+\int_{Q}f_{n}^{2}(x_{3})f_{n}^{2}\big(|x'|\big)
+\int_{Q}f_{n}^{2}(x_{3})(f_{n}')^{2}\big(|x'|\big)|x'|^{2}
\Big)\\
&\leq32\pi\Big(
2\int_{1}^{n+1}r^{3}f_{n}^{2}(r)\,dr
+n\int_{1}^{n+1}rf_{n}^{2}(r)\,dr
+n\int_{1}^{n+1}r^{3}(f_{n}')^{2}(r)\,dr
\Big)\\
&\leq32\pi\Big(
2\int_{1}^{n+1}r^{3}\,dr
+n\int_{1}^{n+1}r\,dr
+n\int_{1}^{2}r^{3}\,dr
+n\int_{n}^{n+1}r^{3}\,dr
\Big)
\sim n^{4}.
\end{align*}

Thus, in both cases, a closed range estimate for $\rotc$ cannot hold,
and $R(\rotc_{Q})$ is not closed.
\end{proof}

\begin{rem}
\label{rem:helmnoharm}
Let us clarify \eqref{eq:helmnoharm}.
By Corollary \ref{cor:harmfields} there is only the trivial harmonic Dirichlet field,
i.e., $\calH_{\sfD}(Q)=\{0\}$.
Then the projection theorem shows the orthogonal  Helmholtz-type decompositions
\begin{align*}
\Lt(Q)&=\ol{R(\rot)}\oplus N(\rotc),\\
N(\div)&=\ol{R(\rot)}\oplus\calH_{\sfD}(Q)=\ol{R(\rot)}.
\end{align*}
For more detailed results on harmonic fields see, 
e.g., \cite{P1982a,PW2021a}. 
\end{rem}

\begin{theo}[closed range of the divergence]
\label{theo:closedrangediv} 
Let $Q$ be as in \eqref{eq:q}. 
Then:
$$R(\divc_{Q})\text{ closed}
\qequi
R(\na_{Q})\text{ closed}
\qequi
\db_{Q}=3$$
\end{theo}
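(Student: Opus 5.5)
The first equivalence is Theorem \ref{theo:bcrt}, since $\divc=-\na^{*}$. It therefore suffices to prove that $R(\divc_{Q})$ is closed if and only if $\db_{Q}=3$. I follow the same scheme as in the proof of Theorem \ref{theo:closedrangerot}: Theorem \ref{theo:fatblem1} to reduce closedness to an inequality, then Gaffney's inequality together with Lemma \ref{lem:friedrichs}.

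For $\db_{Q}=3$, I take $E\in D(\divc)\cap N(\divc)^{\bot}$. Then $N(\divc)^{\bot}=\ol{R(\na)}\subset N(\rot)$ by the complex property, so $E\in\Hc(\div,Q)\cap N(\rot)$. Theorem \ref{theo:gaffneyconvex} gives $E\in\H^{1}(Q)$ and $\norm{\na E}_{\Lt(Q)}\leq\norm{\divc E}_{\Lt(Q)}$. The normal boundary condition for $E\in\H^{1}(Q)\cap\Hc(\div,Q)$ yields $E_{j}=0$ on the face $\{x_{j}=a_{j}\}$ for each $j\in\{1,2,3\}$. This requires the trace argument already used (in the tangential case) in Theorem \ref{theo:closedrangerot}: $\H^{1}$-traces exist on the flat faces, and the normal trace of an $\Hc(\div)$ field vanishes there.

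Since every $\ell_{j}<\infty$, I apply Lemma \ref{lem:friedrichs} in direction $j$ to the component $E_{j}$. This gives $\norm{E_{j}}_{\Lt(Q)}\leq c_{j}\norm{\na E_{j}}_{\Lt(Q)}$, and hence
$$\norm{E}_{\Lt(Q)}\leq\max\{c_{1},c_{2},c_{3}\}\norm{\na E}_{\Lt(Q)}\leq\max_{j}c_{j}\norm{\divc E}_{\Lt(Q)}.$$
This is exactly why all three directions are needed: each component must be controlled in its own direction.

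For $\db_{Q}\leq2$, I may assume without loss of generality $\ell_{1}=b_{1}=\infty$ and $a_{1}\in\{-\infty,0\}$. By Corollary \ref{cor:harmfields}, $\calH_{\sfN}(Q)=\{0\}$, so as in Remark \ref{rem:helmnoharm} we have $N(\divc)^{\bot}=\ol{R(\na)}=N(\rot)$. It then suffices to find $E_{n}\in\Hc(\div,Q)\cap N(\rot)$ with $\norm{E_{n}}_{\Lt(Q)}/\norm{\div E_{n}}_{\Lt(Q)}\to\infty$. I try gradients $E_{n}=\na u_{n}$ with $u_{n}(x)=f_{n}(x_{1})\,g(x_{2},x_{3})$, where $f_{n}$ is from \eqref{eq:fn} and $g$ is chosen so that $\p_{\nu}u_{n}=0$ on the boundary. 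Concretely, in each bounded direction $j$ I use a cosine $\cos(\pi(x_{j}-a_{j})/\ell_{j})$, which has vanishing normal derivative at both ends. In an unbounded direction I would instead use a profile such as $f_{n}$, or else a constant when $Q$ is the whole line in that direction. The simplest choice is $g\equiv1$: then $E_{n}=f_{n}'(x_{1})e^{1}$, which lies in $\Hc(\div,Q)$ because $f_{n}'$ vanishes near $x_{1}=a_{1}$ and the normal component vanishes on the other faces. But this field is not in $\Lt$ when the cross-section is unbounded. So in unbounded transversal directions I replace the constant by a factor like $f_{n}$ (or $f_{n}$ composed with a suitable radial variable), and the resulting Neumann condition needs care.

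The cleanest route is probably to instead contradict a Poincaré-type inequality for $\na_{Q}$ directly, using that $R(\na)$ closed is equivalent to $\norm{u}\leq c\norm{\na u}$ for $u\perp N(\na)$. Here $N(\na)$ is the constants, which is $\{0\}$ because $|Q|=\infty$. So I take $u_{n}(x)=f_{n}(x_{1})h_{n}(x_{2},x_{3})$ with $h_{n}$ equal to $1$ in bounded directions and $f_{n}$ in unbounded ones. Then $\norm{u_{n}}^{2}\sim n^{k}$ and $\norm{\na u_{n}}^{2}\sim n^{k-1}$, where $k=3-\db_{Q}\geq1$, which violates the inequality. No boundary condition on $u_{n}$ is needed, which removes the main obstacle; the only remaining work is the routine scaling estimates.
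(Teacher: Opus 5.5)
Your argument is correct, and the direction $\db_{Q}\leq2$ is essentially the paper's. Since $Q$ is connected and $|Q|=\infty$, you have $N(\na_{Q})=\{0\}$, so it suffices to violate $\norm{u}_{\Lt(Q)}\leq c\norm{\na u}_{\Lt(Q)}$ on $\H^{1}(Q)$. The paper does this case by case with $f_{n}(x_{3})$, $f_{n}(|x'|)$, $f_{n}(|x|)$. Your tensor products $\prod_{j:\ell_{j}=\infty}f_{n}(x_{j})$, as in Lemma \ref{lem:notclosed}, handle all three cases at once with the scaling $n^{k}$ versus $n^{k-1}$.

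The genuine difference is the direction $\db_{Q}=3$. The paper simply invokes Rellich--Kondrachov: compactness of $\H^{1}(Q)\cpt\Lt(Q)$ plus the usual contradiction argument. You instead run the Gaffney--Friedrichs scheme of Theorem \ref{theo:closedrangerot} with normal instead of tangential boundary conditions. This is precisely the alternative the paper records in Remark \ref{rem:closedrangediv}(ii) and Remark \ref{rem:closedrange}(iii).
\begin{itemize}
\item[] \textbf{Compactness route.} It is shorter and works on any bounded open set with compact $\H^{1}$-embedding, but it yields no constant.
\item[] \textbf{Your route.} It needs the heavier Theorem \ref{theo:gaffneyconvex}, the identification $E_{j}|_{\{x_{j}=a_{j}\}}=0$ from the vanishing normal trace, and the flat-faced cuboid geometry. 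In return it gives the explicit bound $\norm{E}_{\Lt(Q)}\leq\max_{j}c_{j}\norm{\divc E}_{\Lt(Q)}$. Since the reduced inverses of $\na_{Q}$ and $\divc_{Q}$ have equal norms, this also gives the Poincar\'e constant $\max_{j}\ell_{j}/\sqrt{2}$ for $\na_{Q}$.
\end{itemize}

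Delete the exploratory middle part rather than repairing it. First, $f_{n}'(x_{1})e^{1}$ is not in $\Hc(\div,Q)$: $f_{n}'$ jumps, so $\div\big(f_{n}'(x_{1})e^{1}\big)=f_{n}''(x_{1})$ consists of point masses. Second, even after smoothing, unit-length ramps keep $\norm{E_{n}}_{\Lt(Q)}/\norm{\div E_{n}}_{\Lt(Q)}$ bounded. Your final Poincar\'e counterexample avoids both issues.
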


\begin{proof} 
Again, the first equivalence is a direct consequence of the closed range theorem.  
In case $\db_{Q}=3$, $Q$ is bounded and
the Rellich-Kondrachov selection theorem, i.e.,
the compactness of the embedding $\H^{1}(Q)\cpt\Lt(Q)$,
yields closedness of the range of $R(\na)$. 

For $\db_{Q}<3$ 
we note that $N(\na_{Q})=\{0\}$,
and we have the following counterexamples
using again $f_{n}$ from \eqref{eq:fn}:

If $\db_{Q}=2$, e.g., $\ell_{1},\ell_{2}<\infty$ 
and, without loss of generality, 
$\ell_{3}=b_{3}=\infty$, $a_{3}\in\{-\infty,0\}$,
let $u_{n}(x)\coloneqq f_{n}(x_{3})$. 
Then $\norm{u_{n}}_{\Lt(Q)}^{2}\sim n$
and $\norm{\na u_{n}}_{\Lt(Q)}^{2}\sim 1$.

If $\db_{Q}=1$, e.g., $\ell_{3}<\infty$ 
and, without loss of generality, 
$\ell_{1}=\ell_{2}=b_{1}=b_{2}=\infty$, $a_{1},a_{2}\in\{-\infty,0\}$,
let $u_{n}(x)\coloneqq f_{n}\big(|x'|\big)$.  
Then $\norm{u_{n}}_{\Lt(Q)}^{2}\sim n^{2}$
and $\norm{\na u_{n}}_{\Lt(Q)}^{2}\sim n$.

If $\db_{Q}=0$, e.g., without loss of generality, 
$\ell_{j}=b_{j}=\infty$ and $a_{j}\in\{-\infty,0\}$
for all $j\in\{1,2,3\}$,
let $u_{n}(x)\coloneqq f_{n}\big(|x|\big)$. 
Then $\norm{u_{n}}_{\Lt(Q)}^{2}\sim n^{3}$
and $\norm{\na u_{n}}_{\Lt(Q)}^{2}\sim n^{2}$.

Thus, in any case, a closed range estimate for $\na$ cannot hold,
and $R(\na_{Q})$ is not closed.
\end{proof}

\begin{rem}
\label{rem:closedrangediv}
There are more proofs of Theorem \ref{theo:closedrangediv}.
\begin{itemize}
\item[\bf(i)]
Another approach is direct verification of the Poincar\'e inequality, i.e.,
for all $u\in\H^{1}(Q)$
$$\norm{u-u_{0}}_{\Lt(Q)}\leq 
c\norm{\na u}_{\Lt(Q)},
\qquad
u_{0}\coloneqq\frac{1}{|Q|}\int_{Q}u.$$ 
\item[\bf(ii)]
A third option, showing the full symmetry of our arguments,
is to copy the proof of Theorem \ref{theo:closedrangerot},
now  for $\divc$ with homogeneous normal boundary conditions, implying that
$E_{1}=0$ at $\{a_{1}\}\times I_{2}\times I_{3}$
and $E_{2}=0$ at $I_{1}\times\{a_{2}\}\times I_{3}$
and $E_{3}=0$ at $I_{1}\times I_{2}\times\{a_{3}\}$.
\end{itemize}
\end{rem}

\begin{rem}[Friedrichs'/Poincare estimates]
\label{rem:closedrange}
Let 
$$c_{2,3}\coloneqq\max\{c_{2},c_{3}\},
\qquad
c_{1,2,3}\coloneqq\max\{c_{1},c_{2},c_{3}\}.$$
Then Lemma \ref{lem:friedrichs} and 
small modifications of the proofs of 
Theorem \ref{theo:closedrangerot}, Theorem \ref{theo:closedrangediv},
and Remark \ref{rem:closedrangediv},
show the following:
\begin{itemize}
\item[\bf(i)]
Let $\db_{Q}\geq1$, e.g., $\ell_{3}<\infty$. 
For all $u\in\Hc^{1}(Q)$ it holds
$$\norm{u}_{\Lt(Q)}\leq 
c_{3}\norm{\na u}_{\Lt(Q)}.$$
\item[\bf(ii)]
Let $\db_{Q}\geq2$, e.g., $\ell_{2},\ell_{3}<\infty$. 
For all $E\in\Hc(\rot,Q)\cap\H(\div,Q)$
it holds $E\in\H^{1}(Q)$ and
$$\norm{E}_{\Lt(Q)}\leq 
c_{2,3}\norm{\na E}_{\Lt(Q)}
\leq c_{2,3}\big(\norm{\rotc E}_{\Lt(Q)}^{2} 
+\norm{\div E}_{\Lt(Q)}^{2}\big)^{1/2}.$$
\item[\bf(iii)]
Let $\db_{Q}=3$, i.e., $\ell_{1},\ell_{2},\ell_{3}<\infty$. 
For all $E\in\H(\rot,Q)\cap\Hc(\div,Q)$
it holds $E\in\H^{1}(Q)$ and
$$\norm{E}_{\Lt(Q)}\leq 
c_{1,2,3}\norm{\na E}_{\Lt(Q)}
\leq c_{1,2,3}\big(\norm{\rot E}_{\Lt(Q)}^{2} 
+\norm{\divc E}_{\Lt(Q)}^{2}\big)^{1/2}.$$
\end{itemize}
\end{rem}

\subsection{An Example with Mixed Boundary Conditions}
\label{sec:exmixbc}

Another detailed look into the proof of Theorem \ref{theo:closedrangerot} shows that, 
if mixed boundary conditions are asked for, 
then the respective $\rot$ can be established to have closed range, 
even though only 
$$\db_{Q}=1<2.$$
In order to keep this exposition as focussed as possible, we shall introduce mixed boundary conditions in a rather ad-hoc way. 
We refer to the literature for the proper set-up and the corresponding Hilbert complex structure, 
see, in particular, \cite{PS2022a}.

Let $Q\coloneqq\reals^{2}\times (0,1)$, i.e., $\db_{Q}=1$,
and let $\ga_{0}\coloneqq\reals^{2}\times\{0\}$ and $\ga_{1}\coloneqq\reals^{2}\times\{1\}$. 
With the help of test fields 
$$\Ci_{\ga_{\ell}}(Q)
\coloneqq\big\{\phi|_{Q}:\phi\in\Cic(\rt)\,\wedge\,\dist(\supp\phi,\ga_{\ell})>0\big\}$$
we define restrictions $\rot_{0}$ and $\div_{1}$ of $\rot$ and $\div$ by 
$$D(\rot_{0})\coloneqq\H_{\ga_{0}}(\rot,Q)\coloneqq\ol{\Ci_{\ga_{0}}(Q)}^{\H(\rot,Q)},
\qquad
D(\div_{1})\coloneqq\H_{\ga_{1}}(\div,Q)\coloneqq\ol{\Ci_{\ga_{1}}(Q)}^{\H(\div,Q)},$$
realising mixed homogeneous boundary conditions.
We show that 
$$R(\rot_{0})\text{ is closed.}$$

Indeed, $\rot_{0}$ and $\div_{1}$ are densely defined and closed. 
It follows from \cite{PS2022a} that $\rot_{0}^{*}=\rot_{1}$
and that the complex property also holds for mixed boundary conditions, i.e., 
$$N(\rot_{0})^\bot=\ol{R(\rot_{1})}\subset N(\div_{1}).$$
Thus, again, it suffices to establish Gaffney's inequality for 
$E\in D(\rot_{0})\cap D(\div_{1})$ and to show that 
$E$ satisfies boundary conditions allowing 
for Friedrichs' estimate to hold (Lemma \ref{lem:friedrichs}).  

Let $E\in D(\rot_{0})\cap N(\rot_{0})^{\bot}\subset D(\rot_{0})\cap N(\div_{1})$. 
Let $\phi\in\C^{\infty}\big(\rt,[0,1]\big)$
with bounded derivative
and $\phi=1$ near $\ga_{0}$ and $\phi=0$ near $\ga_{1}$.
Then $E=\phi E+(1-\phi)E$
as well as (by mollification) 
$\phi E\in\Hc(\rot,Q)\cap\H(\div,Q)$
and $(1-\phi)E\in\H(\rot,Q)\cap\Hc(\div,Q)$.
As $Q$ is convex Theorem \ref{theo:gaffneyconvex} yields
$\phi E,\,(1-\phi)E\in\H^{1}(Q)$, that is,
$E\in\H^{1}(Q)$.
Similar to the proof of Lemma \ref{lem:approxG}, let
$\varphi\in\Cic\big(\rt,[0,1]\big)$ such that
$\varphi|_{B(0,1)}=1$ and $\varphi|_{\rt\setminus B(0,2)}=0$,
and put $\varphi_{r}\coloneqq\varphi(\,\cdot\,/r)$ for $r>0$.
Then $\varphi_{r}|_{B(0,r)}=1$
and $\varphi_{r}|_{\rt\setminus B(0,2r)}=0$.
Note that $\supp\na\varphi_{r}\subset\ol{B(0,2r)}\setminus B(0,r)$
and $|\na\varphi_{r}|\leq c/r$.
Lemma \ref{lem:costabel} shows
$$\bnorm{\na(\varphi_{r}E)}_{\L^{2}(Q)}^{2}
=\bnorm{\rot(\varphi_{r}E)}_{\L^{2}(Q)}^{2}
+\bnorm{\div(\varphi_{r}E)}_{\L^{2}(Q)}^{2}$$
(integration just over $Q\cap(-3r,3r)^{3}$,
flat boundaries, and mixed boundary conditions on the particular faces).
Lebesgue's dominated convergence theorem 
together with the product rules yields for $r\to\infty$
\begin{align}
\label{eq:gradrotdivexample}
\norm{\na E}_{\L^{2}(Q)}^{2}
=\norm{\rot E}_{\L^{2}(Q)}^{2}
+\norm{\div E}_{\L^{2}(Q)}^{2},
\end{align}
cf.~\eqref{eq:estr1} and \eqref{eq:estr2}.
The tangential boundary condition implies $E_{1}=E_{2}=0$ at $\ga_{0}$
and the normal boundary condition shows $E_{3}=0$ at $\ga_{1}$.
Thus, in any case, $E_{j}$ satisfies the Friedrichs estimate from Lemma \ref{lem:friedrichs},
and the closed range inequality for $\rot_{0}$ follows by \eqref{eq:gradrotdivexample} and $\div E=0$.

%

Note that by symmetry of $\ga_{0}$ and $\ga_{1}$, 
the closed range theorem, and Lemma \ref{lem:friedrichs},
$R(\div_{1})$ is closed as well since $\div_{1}^{*}=-\na_{0}$.
Therefore, all operators in the de Rham complex with these particular mixed boundary conditions
\begin{equation}
\label{eq:derham3}
\def\arrowlength{12ex}
\def\arrowdistance{.8}
\begin{tikzcd}[column sep=\arrowlength]
\Lt(Q) 
\ar[r, rightarrow, shift left=\arrowdistance, "\A_{0}=\na_{0}"] 
\ar[r, leftarrow, shift right=\arrowdistance, "\A_{0}^{*}=-\div_{1}"']
&
\Lt(Q) 
\ar[r, rightarrow, shift left=\arrowdistance, "\A_{1}=\rot_{0}"] 
\ar[r, leftarrow, shift right=\arrowdistance, "\A_{1}^{*}=\rot_{1}"']
& 
\Lt(Q) 
\arrow[r, rightarrow, shift left=\arrowdistance, "\A_{2}=\div_{0}"] 
\arrow[r, leftarrow, shift right=\arrowdistance, "\A_{2}^{*}=-\na_{1}"']
& 
\Lt(Q),
\end{tikzcd}
\end{equation}
cf.~\eqref{eq:derham1} and \eqref{eq:derham2},
have closed range.

\section{Closed Range Results for Global Lipschitz Domains}
\label{sec:clranlip}

In this section we turn to domains that are not necessarily cubes anymore. 
Let $\Xi\subset\rt$ be open, and let 
$$\Phi:\Xi\to\om\coloneqq\Phi(\Xi)$$ 
be an \textbf{admissible bi-Lipschitz transformation},
cf.~Appendix \ref{sec:transtheo}.
The next theorem asserts that admissible transformations preserve closedness of the range. 

The domains in question are called global Lipschitz domains defined as follows.
We say $\om$ is a \textbf{global Lipschitz domain}, 
if there exists an open cuboid $Q\subset\rt$ and an admissible bi-Lipschitz transformation $\Phi$ 
such that $\Phi(Q)=\om$. 
Correspondingly, we define the \textbf{number of directions of boundedness} by
$\db_{\om}\coloneqq\db_{Q}$.

\begin{theo}[closed range invariance]
\label{theo:clranlip} 
Let $\om,\Xi\subset\rt$ 
be open and let $\Phi\colon\Xi\to\om$ 
be an admissible bi-Lipschitz transformation.
Then
$$R(\rot_{\om})\text{ closed}
\qequi
R(\rot_{\Xi})\text{ closed}.$$
The corresponding results also hold for $R(\na_{\om})$, $R(\div_{\om})$,
and $R(\nac_{\om})$, $R(\rotc_{\om})$, $R(\divc_{\om})$.
\end{theo}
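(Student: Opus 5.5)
The plan is to transport everything through the pull-back operators of the transformation theorem (Theorem \ref{theo:transtheo} in Appendix \ref{sec:transtheo}), which I take to give, for an admissible bi-Lipschitz $\Phi$ with Jacobian $J\coloneqq\Phi'\in\L^{\infty}$, $J^{-1}\circ\Phi^{-1}\in\L^{\infty}$ and $\det J$ bounded away from $0$ and $\infty$, the four Piola-type maps
$$\tau_{0}u\coloneqq u\circ\Phi,\qquad
\tau_{1}E\coloneqq J^{\top}(E\circ\Phi),\qquad
\tau_{2}E\coloneqq(\det J)J^{-1}(E\circ\Phi),\qquad
\tau_{3}u\coloneqq(\det J)(u\circ\Phi).$$
Each is a topological isomorphism $\Lt(\om)\to\Lt(\Xi)$ with inverse of the same type built from $\Phi^{-1}$. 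The maps intertwine the de Rham operators:
$$\na_{\Xi}\tau_{0}=\tau_{1}\na_{\om},\qquad
\rot_{\Xi}\tau_{1}=\tau_{2}\rot_{\om},\qquad
\div_{\Xi}\tau_{2}=\tau_{3}\div_{\om}.$$
In particular, $\tau_{0}$ maps $D(\na_{\om})$ onto $D(\na_{\Xi})$, $\tau_{1}$ maps $D(\rot_{\om})$ onto $D(\rot_{\Xi})$, and $\tau_{2}$ maps $D(\div_{\om})$ onto $D(\div_{\Xi})$. The same statements hold for the ringed versions, since admissibility (plus density and continuity in the graph norms) ensures that test functions are mapped to elements of the closure of test functions.

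The core of the argument is an abstract statement. Suppose $\A:D(\A)\subset\H_{0}\to\H_{1}$ and $\B:D(\B)\subset\wt{\H}_{0}\to\wt{\H}_{1}$ are closed operators, and there are topological isomorphisms $\sigma:\H_{0}\to\wt{\H}_{0}$ and $\tau:\H_{1}\to\wt{\H}_{1}$ with $\sigma D(\A)=D(\B)$ and $\B\sigma=\tau\A$. Then $R(\B)=\tau R(\A)$, and a topological isomorphism maps closed sets to closed sets, so $R(\A)$ is closed if and only if $R(\B)$ is closed. This avoids the closed range inequality (where the orthogonal complement of the kernel would not be preserved by $\sigma$) and works directly with ranges. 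I would state this and apply it to $\A=\rot_{\om}$, $\B=\rot_{\Xi}$, $\sigma=\tau_{1}$, $\tau=\tau_{2}$. The same scheme handles $\na$ with $(\tau_{0},\tau_{1})$ and $\div$ with $(\tau_{2},\tau_{3})$, and likewise for $\nac,\rotc,\divc$. Alternatively, the ringed versions follow from the unringed ones by Banach's closed range theorem (Theorem \ref{theo:bcrt}), because $\rotc_{\om}=\rot_{\om}^{*}$, $\nac=-\div^{*}$, and $\divc=-\na^{*}$. Using Theorem \ref{theo:bcrt} is cleaner, since it only requires the unringed intertwining.

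The main obstacle is the intertwining identity on the full maximal domains for merely Lipschitz $\Phi$. The difficulty is that the chain rule for $\rot$ and $\div$ under $J^{\top}$ and Piola transforms involves second derivatives of $\Phi$, which do not exist classically. My first attempt would use duality. For $\psi\in\Cic(\Xi)$, I would verify $\scp{\tau_{1}E}{\rot\psi}_{\Lt(\Xi)}=\scp{\tau_{2}\rot E}{\psi}_{\Lt(\Xi)}$ by changing variables $y=\Phi(x)$. This reduces the identity to $\tau_{2}^{-1}$-type adjoint relations: the adjoint of $\tau_{1}$ with respect to $\Lt$ is, up to the Jacobian determinant, the inverse Piola map, and $\rot$ of a pulled-back smooth field is handled by approximating $\Phi$ by mollified $\Phi_{\eps}$, where the identities $\rot(J^{\top}\na)=0$ (Piola identity $\div\operatorname{cof}J=0$) hold classically and pass to the limit since $J_{\eps}\to J$ a.e. with uniform bounds. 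Then density of smooth functions in $\H(\rot,\om)$ finishes the argument. Presumably this is exactly the content deferred to the appendix, so in the paper I would simply cite Theorem \ref{theo:transtheo} for it and keep the proof to the abstract argument above.
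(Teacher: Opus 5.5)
Your proposal is correct and is essentially the paper's argument. The paper takes a sequence $E_n\in D(\rot_{\om})$ with $\rot E_n\to F$, transports it with $\tau^{1}_{\Phi}$ and $\tau^{2}_{\Phi}$ from Theorem \ref{theo:transtheo}, and pulls the limit back with $\tau^{1}_{\Phi^{-1}}$ and $\tau^{2}_{\Phi^{-1}}$ to get $F\in R(\rot_{\om})$; this is the sequential form of your identity $R(\rot_{\Xi})=\tau_{2}R(\rot_{\om})$. The only harmless difference is that you get the ringed cases from Banach's closed range theorem, whereas the paper handles them ``analogously'' via the $\Hc$-version of the transformation theorem; both routes are valid.
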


\begin{proof}
Assume that $R(\rot_{\Xi})$ is closed, and 
let $(E_{n})$ in $ D(\rot_{\om})=\H(\rot,\om)$ be a sequence such that
$\rot E_{n}\to F$ in $\Lt(\om)$ for some $F\in\Lt(\om)$.
By Theorem \ref{theo:transtheo} we have
$\tau^{1}_{\Phi}E_{n}\in\H(\rot,\Xi)$
and 
$\rot\tau^{1}_{\Phi}E_{n}=\tau^{2}_{\Phi}\rot E_{n}\to\tau^{2}_{\Phi}F$
in $\Lt(\Xi)$.
As $R(\rot_{\Xi})$ is closed we get
$\tau^{2}_{\Phi}F=\rot H\in R(\rot_{\Xi})$
with $H\in D(\rot_{\Xi})=\H(\rot,\Xi)$.
Then $\tau^{1}_{\Phi^{-1}}H\in\H(\rot,\om)=D(\rot_{\om})$
and $\rot\tau^{1}_{\Phi^{-1}}H=\tau^{2}_{\Phi^{-1}}\rot H=F$
by Theorem \ref{theo:transtheo} 
and thus $F\in R(\rot_{\om})$.
Similarly, we see the corresponding results for $R(\na_{\om})$ and $R(\div_{\om})$. 
The remaining assertions follow analogously.
\end{proof}

It is not difficult to see that $\db_{\om}$ does not depend on the particular choice of $Q$ and $\Phi$. 
In fact, this is due to the fact that bounded intervals and unbounded intervals 
cannot be mapped in a bi-Lipschitz way onto another.

\begin{theo}[main theorem]
\label{theo:main}
Let $\om\subset\rt$ be a global Lipschitz domain.
Then
\begin{align*}
R(\nac_{\om})&\text{ closed}
&&\equi&
R(\div_{\om})&\text{ closed}
&&\equi&
\db_{\om}&\geq1\\
R(\rotc_{\om})&\text{ closed}
&&\equi&
R(\rot_{\om})&\text{ closed}
&&\equi&
\db_{\om}&\geq2\\
R(\divc_{\om})&\text{ closed}
&&\equi&
R(\na_{\om})&\text{ closed}
&&\equi&
\db_{\om}&=3
\end{align*}
\end{theo}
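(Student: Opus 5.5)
By definition of a global Lipschitz domain, there exist an open cuboid $Q$ as in \eqref{eq:q} and an admissible bi-Lipschitz transformation $\Phi\colon Q\to\om$ with $\Phi(Q)=\om$ and $\db_{\om}=\db_{Q}$. The plan is to reduce everything to the cuboid case, which is already settled, and then transport the conclusions along $\Phi$ using Theorem \ref{theo:clranlip}.

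The first step is to record the three characterisations for $Q$. Theorem \ref{theo:closedrangegrad} gives $R(\nac_{Q})$ closed $\equi$ $R(\div_{Q})$ closed $\equi$ $\db_{Q}\geq1$. Theorem \ref{theo:closedrangerot} gives $R(\rotc_{Q})$ closed $\equi$ $R(\rot_{Q})$ closed $\equi$ $\db_{Q}\geq2$. Theorem \ref{theo:closedrangediv} gives $R(\divc_{Q})$ closed $\equi$ $R(\na_{Q})$ closed $\equi$ $\db_{Q}=3$.

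The second step applies Theorem \ref{theo:clranlip} with $\Xi=Q$ to each of the six operators. This yields, for instance, $R(\nac_{\om})$ closed $\equi$ $R(\nac_{Q})$ closed, and the same for $\div$, $\rotc$, $\rot$, $\divc$ and $\na$. The left-hand equivalences within each row could also be read off directly from Banach's closed range theorem (Theorem \ref{theo:bcrt}), since $\nac_{\om}=-\div_{\om}^{*}$, $\rotc_{\om}=\rot_{\om}^{*}$ and $\divc_{\om}=-\na_{\om}^{*}$. Chaining the second step with the first then gives each row of the claimed theorem, with $\db_{Q}$ replaced by $\db_{\om}$.

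The only point needing care is that $\db_{\om}$ is well defined, i.e.\ independent of the choice of $(Q,\Phi)$. Otherwise the statement would make sense only up to that choice. The remark preceding the theorem addresses this, and the justification would go as follows. If $\Phi_{1}(Q_{1})=\Phi_{2}(Q_{2})$, then $\Phi_{2}^{-1}\circ\Phi_{1}$ is a bi-Lipschitz map $Q_{1}\to Q_{2}$.

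Rather than arguing with intervals directly, I would use the result just proven as an invariant. By the third row applied to both cuboids, $\db_{Q_{1}}=3$ if and only if $\db_{Q_{2}}=3$. By the second row, $\db_{Q_{1}}\geq2$ if and only if $\db_{Q_{2}}\geq2$. By the first row, $\db_{Q_{1}}\geq1$ if and only if $\db_{Q_{2}}\geq1$. Together these force $\db_{Q_{1}}=\db_{Q_{2}}$, so well-definedness follows from the characterisation itself, and no step here is a genuine obstacle.
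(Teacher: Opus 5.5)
Your proof is correct and follows the paper's argument: the cuboid characterisations of Section~\ref{sec:clrancubes} are transported to $\om$ via Theorem~\ref{theo:clranlip}, and Banach's closed range theorem pairs the two operators within each row. Your proof that $\db_{\om}$ does not depend on the choice of $(Q,\Phi)$ reads the conditions $\db\geq1$, $\db\geq2$, $\db=3$ off intrinsic closed range properties of $\om$; this is a rigorous, non-circular replacement for the paper's brief remark about bi-Lipschitz images of bounded versus unbounded intervals.
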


\begin{proof}
The statements follow from Banach's closed range theorem
and the characterisations in Section \ref{sec:clrancubes} 
together with the invariance of closed ranges from Theorem \ref{theo:clranlip}.
\end{proof}

\subsection{Examples}
\label{sec:ex}

We provide some admissible transformations
such that Theorem \ref{theo:main} is applicable.



\begin{ex}[convex bodies]
Let $\om\subset\rt$ be open, bounded, and convex. 
Then there exists a bi-Lipschitz map $\Phi:Q\coloneqq (-1,1)^{3}\to\om$,
cf.~\cite{GHKR2008a},
which can be extended to an admissible bi-Lipschitz transformation.
Hence, $\nac_{\om}$, $\div_{\om}$, $\rotc_{\om}$, $\rot_{\om}$, $\divc_{\om}$, $\na_{\om}$ have closed range.
\end{ex}

\begin{ex}[infinite L-shaped pipe] 
Let $\Phi:Q\to\om=\Phi(Q)$
with
$$Q:=\reals\times(0,1)^{2},
\qquad
\Phi(r,t,s):=
\thvec{r}
{|r|+t}
{t+s}.$$
Then $\det J_{\Phi}(r,t,s)=1$ and $\db_{Q}=2$. 
Hence, $\Phi$ is admissible,
and $\nac_{\om}$, $\div_{\om}$, $\rotc_{\om}$, $\rot_{\om}$ have closed range.
\end{ex}

\begin{figure}[h]
\includegraphics[scale=.095]{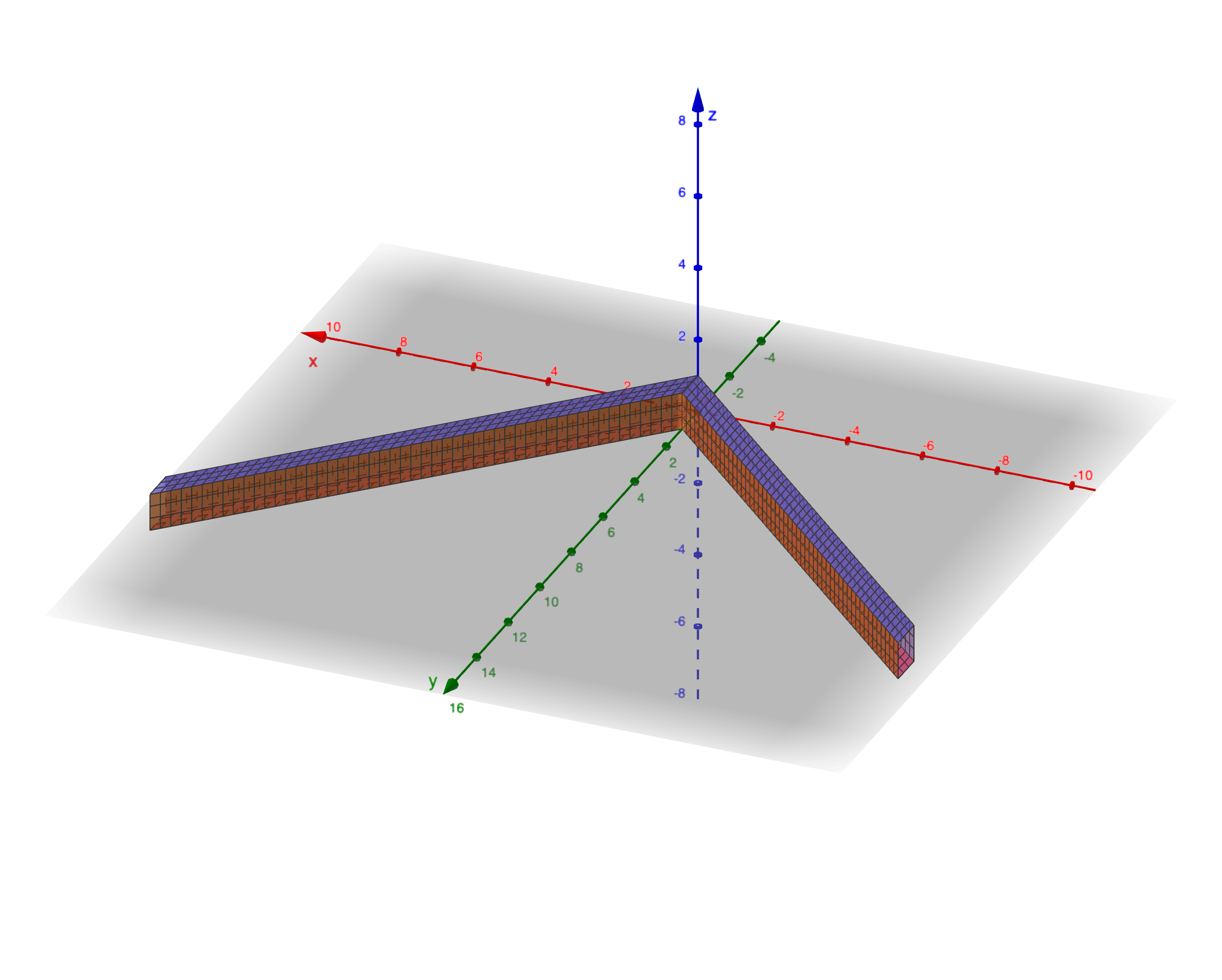}
\includegraphics[scale=.095]{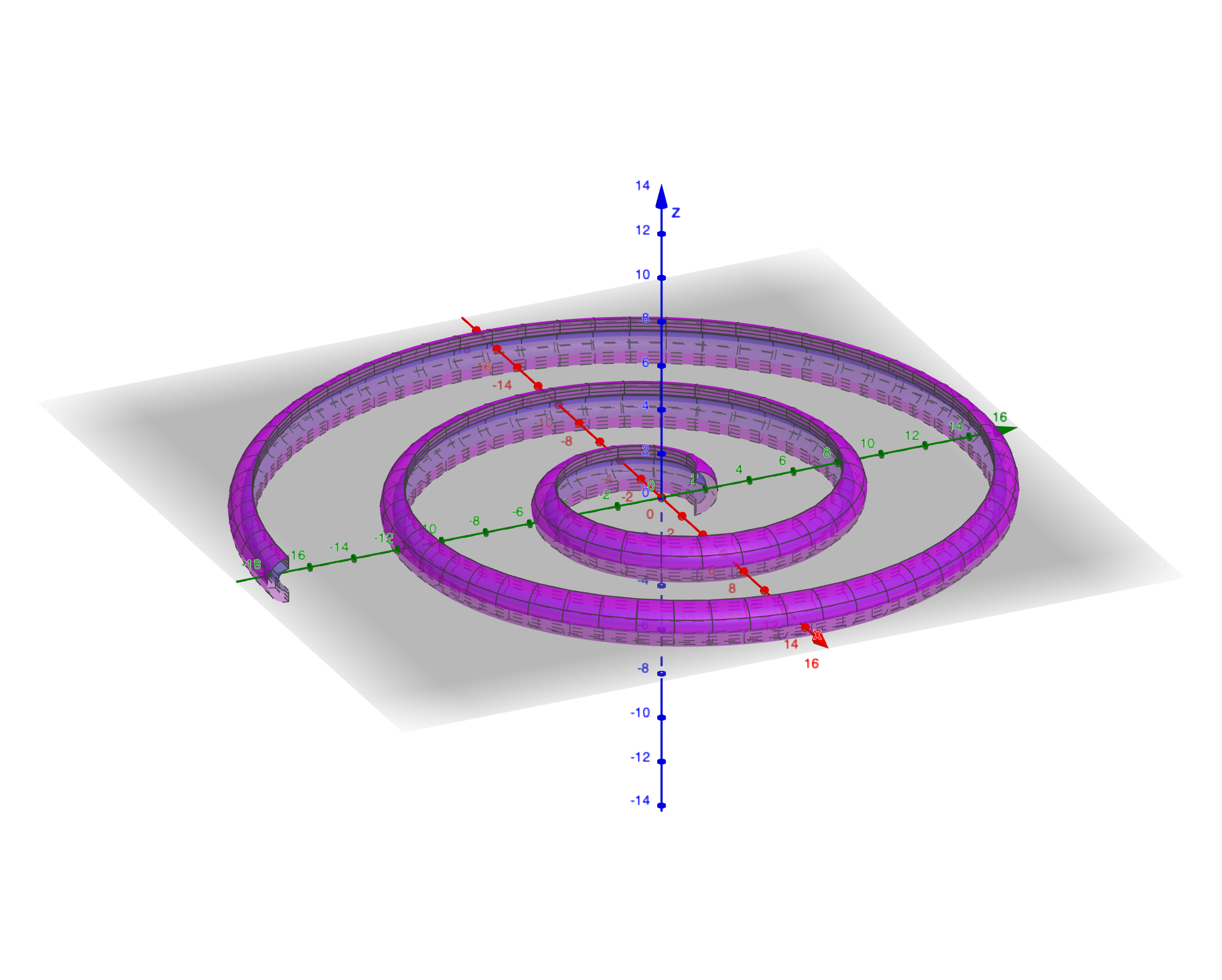}
\caption{plots of the L-shaped pipe and the half snail shell from GeoGebra.org}
\label{pic:geogebra}
\end{figure}

\begin{ex}[infinite growing half snail shell] 
\label{ex:snail}
Let $\Phi:Q\to\om=\Phi(Q)$
with
$$Q:=(2,\infty)\times\big(-\frac{\pi}{2},\frac{\pi}{2}\big)\times\big(\frac{1}{2},1\big)$$
and 
$$\Phi(\varphi,\psi,r):=
\beta(\varphi)
\thvec{\cos\big(\alpha(\varphi)\big)}
{\sin\big(\alpha(\varphi)\big)}
{0}
+
r\gamma(\varphi)
\thvec{\cos\big(\alpha(\varphi)\big)\cos(\psi)}
{\sin\big(\alpha(\varphi)\big)\cos(\psi)}
{\sin(\psi)}.$$
Computing the Jacobian and
$$\det J_{\Phi}(\varphi,\psi,r)
=r\alpha'(\varphi)\gamma^{2}(\varphi)\big(\beta(\varphi)+r\gamma(\varphi)\cos(\psi)\big),$$
we see that the terms
$$\gamma,
\quad
\gamma',
\quad
\alpha'(\varphi)\big(\beta(\varphi)+r\gamma(\varphi)\cos(\psi)\big),
\quad
\beta'(\varphi)+r\gamma'(\varphi)\cos(\psi)$$
have to be bounded to fulfill
$J_{\Phi}\in\L^{\infty}(Q)$ and $\det J_{\Phi}\geq c>0$,
cf.~Appendix \ref{sec:transtheo}.
One possible choice is
$\alpha(\varphi):=\beta(\varphi):=\sqrt{\varphi}$ and $\gamma(\varphi):=1$
with
$$\det J_{\Phi}(\varphi,\psi,r)
=r\frac{\sqrt{\varphi}+r\cos(\psi)}{2\sqrt{\varphi}}
\geq r\geq\frac{1}{4},$$
yielding the picture in Figure \ref{pic:geogebra}.
We have $\db_{Q}=2$.
Hence, $\Phi$ is admissible,
and $\nac_{\om}$, $\div_{\om}$, $\rotc_{\om}$, $\rot_{\om}$ have closed range.
Another possible choice (not producing a nice plot) is
$\alpha(\varphi):=\ln(\varphi)$, $\beta(\varphi):=\varphi$, and $\gamma(\varphi):=1$.
\end{ex}

\section*{Acknowledgments} 

We thank Fabian Elsken-Sicard for 
a very careful reading of the manuscript
and significantly improving the counterexamples 
presented in the proof of Theorem \ref{theo:closedrangerot}
as well as finding some bugs in the proof of Theorem \ref{theo:transtheo}.
We also thank Francesco Ferraresso and Joachim Sch\"oberl
for pointing out mistakes in an earlier version of Example \ref{ex:snail}.



\printbibliography

\vspace*{5mm}
\hrule
\vspace*{3mm}

\appendix

\section{The Lipschitz Transformation Theorem}
\label{sec:transtheo}

Let $\Xi\subset\rt$ be open 
and let $\Phi\in\C^{0,1}(\rt,\rt)$ be such that its restriction to $\Xi$, still denoted by
$$\Phi:\Xi\to\Phi(\Xi)=:\om,$$ 
is bi-Lipschitz and regular, i.e.,
$\Phi\in\C^{0,1}(\ol{\Xi},\ol{\om})$ 
and $\Phi^{-1}\in\C^{0,1}\ol{\om},\ol{\Xi})$ with 
$$J_{\Phi}=\Phi'=(\na\Phi)^{\top}\in\L^{\infty}(\Xi),\qquad
\det J_{\Phi}\geq c>0.$$
Such regular bi-Lipschitz transformations $\Phi$ will be called \textbf{admissible}.
For admissible $\Phi$ the inverse and adjunct matrix of $J_{\Phi}$ 
belong to $\L^{\infty}(\Xi)$ as well and shall be denoted by
$$J_{\Phi}^{-1},\qquad
\adj J_{\Phi}\coloneqq (\det J_{\Phi})J_{\Phi}^{-1},$$
respectively.
We denote the composition with $\Phi$
by tilde, i.e., for any tensor field $v$ we define 
$$\wt{v}\coloneqq v\circ\Phi.$$

We introduce a new notation 
$$\Hd=\Hc\quad\text{or}\quad\Hd=\H$$
to handle spaces with and without boundary conditions simultaneously.

In the following, let $\Phi$ be admissible.

\begin{theo}[transformation theorem]
\label{theo:transtheo}
Let $u\in\Hd^{1}(\om)$, $E\in\Hd(\rot,\om)$, and $H\in\Hd(\div,\om)$.
Then 
\begin{align*}
\tau^{0}_{\Phi}u\coloneqq\wt{u}&\in\Hd^{1}(\Xi)
&&\text{and}&
\na\tau^{0}_{\Phi}u&=\tau^{1}_{\Phi}\na u,\\
\tau^{1}_{\Phi}E\coloneqq J_{\Phi}^{\top}\wt{E}&\in\Hd(\rot,\Xi)
&&\text{and}&
\rot\tau^{1}_{\Phi}E&=\tau^{2}_{\Phi}\rot E,\\
\tau^{2}_{\Phi}H\coloneqq (\adj J_{\Phi})\wt{H}&\in\Hd(\div,\Xi)
&&\text{and}&
\div\tau^{2}_{\Phi}H&=\tau^{3}_{\Phi}\div H
\end{align*}
with $\tau^{3}_{\Phi}f\coloneqq (\det J_{\Phi})\wt{f}=(\det J_{\Phi})\tau^{0}_{\Phi}f\in\Lt(\Xi)$
for $f\in\Lt(\om)$.
Moroever, 
\begin{align*}
\tau^{0}_{\Phi}:\Hd^{1}(\om)&\to\Hd^{1}(\Xi),
&
\tau^{1}_{\Phi}:\Hd(\rot,\om)&\to\Hd(\rot,\Xi),\\
\tau^{3}_{\Phi}:\Lt(\om)&\to\Lt(\Xi),
&
\tau^{2}_{\Phi}:\Hd(\div,\om)&\to\Hd(\div,\Xi)
\end{align*}
are topological isomorphisms with norms depending on $\Xi$
and $J_{\Phi}$ only. The inverse operators and the $\Lt$-adjoints, 
i.e., the Hilbert space adjoints of 
$\tau^{q}_{\Phi}:\Lt(\om)\to\Lt(\Xi)$, $q\in\{0,1,2,3\}$,
are given by
$$(\tau^{q}_{\Phi})^{-1}=\tau^{q}_{\Phi^{-1}},\qquad
(\tau^{q}_{\Phi})^{*}=\tau^{3-q}_{\Phi^{-1}}.$$
\end{theo}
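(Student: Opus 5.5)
The plan is to prove the identities first for smooth fields on all of $\om$ and then pass to the Sobolev spaces by density and duality. Since $\Phi$ is only Lipschitz, I would approximate $\Phi$ by mollifications $\Phi_\eps$ with $J_{\Phi_\eps}\to J_\Phi$ a.e. and uniformly bounded. Alternatively, I could work directly with weak derivatives, using Rademacher's theorem and the chain rule for $W^{1,\infty}$ maps composed with smooth functions.

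\textbf{Step 1 (smooth functions, gradient).} For $u\in\Ci(\ol{\om})$ with bounded support, the chain rule for the Lipschitz map $\Phi$ gives $\na(u\circ\Phi)=J_\Phi^{\top}\wt{\na u}$ a.e. This is exactly $\na\tau^0_\Phi u=\tau^1_\Phi\na u$.

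\textbf{Step 2 (the rot and div identities).} For smooth $E$ I would derive the rot identity through the adjoint relation rather than by computing second derivatives of $\Phi$, which need not exist. The change of variables formula with $\det J_\Phi>0$ gives, for $q\in\{0,1,2,3\}$,
\[
\scp{\tau^q_\Phi v}{w}_{\Lt(\Xi)}=\scp{v}{\tau^{3-q}_{\Phi^{-1}}w}_{\Lt(\om)}.
\]
This rests on the pointwise algebra $J^{\top}a\cdot(\adj J)^{-1}\ldots$; concretely, $(\tau^1_\Phi)^*=\tau^2_{\Phi^{-1}}$ follows from $J_{\Phi^{-1}}\circ\Phi=J_\Phi^{-1}$ and from $\adj J_{\Phi^{-1}}\circ\Phi=(\det J_\Phi)^{-1}J_\Phi$. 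Take a test field $\psi\in\Cic(\Xi)$. Since $\Phi^{-1}$ is Lipschitz, $\tau^1_{\Phi^{-1}}\psi$ has compact support, and I would use the classical Piola identity $\div(\adj J_\Psi)^{\top}\ldots$ in its weak form for $\div\tau^2$. The key observation is that $\rot\tau^1_\Phi E=\tau^2_\Phi\rot E$ and $\div\tau^2_\Phi H=\tau^3_\Phi\div H$ hold for smooth $\Phi$ by direct calculation: the identity $\rot(J^{\top}\wt E)=\adj J\,\wt{\rot E}$ is the cofactor identity, and the Piola identity is $\div$ of the rows of $\adj J$ vanishing. Both are stable under the limit $\Phi_\eps\to\Phi$ in the weak sense, because they can be written in divergence form. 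I would therefore prove them for $\Phi_\eps$, pair against test fields, and pass to the limit $\eps\to0$ by dominated convergence.

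\textbf{Step 3 (extension to Sobolev spaces).} The operators $\tau^q_\Phi$ are bounded on $\Lt$ with bounds depending only on $\norm{J_\Phi}_\infty$ and $c$. For $E\in\H(\rot,\om)$ I would argue by duality: for $\psi\in\Cic(\Xi)$,
\[
\scp{\tau^1_\Phi E}{\rot\psi}=\scp{E}{\tau^2_{\Phi^{-1}}\rot\psi}=\scp{E}{\rot\tau^1_{\Phi^{-1}}\psi}=\scp{\rot E}{\tau^1_{\Phi^{-1}}\psi}=\scp{\tau^2_\Phi\rot E}{\psi}.
\]
The third equality requires that $\tau^1_{\Phi^{-1}}\psi$ lies in $\Hc(\rot,\om)$, which I get by applying Step 2 to $\Phi^{-1}$ and a compact-support argument. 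This avoids any density in $\H(\rot,\om)$ for rough $\om$. In the $\Hc$ case, compactly supported smooth fields map to compactly supported Lipschitz fields, which lie in $\Hc$ after mollification; closure then yields $\tau^1_\Phi\Hc\subset\Hc$. The other degrees $q=0$ and $q=2$ are handled identically.

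\textbf{Step 4 (inverses).} The inverse formula $(\tau^q_\Phi)^{-1}=\tau^q_{\Phi^{-1}}$ follows from $J_{\Phi^{-1}}\circ\Phi=J_\Phi^{-1}$.

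The main obstacle is Step 2, namely the weak validity of the cofactor and Piola identities for merely Lipschitz $\Phi$. I would handle it by mollifying $\Phi$, using that $\adj J_{\Phi_\eps}\to\adj J_\Phi$ a.e. and boundedly, and using the distributional form of the identities.
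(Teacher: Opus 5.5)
Your proof is correct, but it differs from the paper's proof in two places.

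\textbf{The identities for a Lipschitz map.} The paper never approximates $\Phi$. It writes $J_{\Phi}^{\top}\wt{E}=\sum_{n}\wt{E}_{n}\na\Phi_{n}$ and $(\adj J_{\Phi})\wt{H}=\sum\wt{H}_{n}\,\na\Phi_{m}\times\na\Phi_{l}$. It then uses that $\na\Phi_{n}$ is curl-free and that $\na\Phi_{m}\times\na\Phi_{l}=\rot(\Phi_{m}\na\Phi_{l})$ is divergence-free. The product rule for the Lipschitz scalars $\wt{E}_{n}$, $\wt{H}_{n}$ plus linear algebra finishes the job. You instead mollify the map and pass to the limit in the weak form, using a.e.\ bounded convergence of the mollified Jacobians and adjugates. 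You only need this for $\Phi^{-1}$ applied to $\psi\in\Cic(\Xi)$ extended by zero, so the mollification can be done locally near the support of the test field in $\om$.

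\textbf{The passage to Sobolev fields.} The paper approximates $E\in\Hd(\rot,\om)$ by (locally) Lipschitz fields, uses closedness of $\rot$ on $\Xi$, and computes the $\Lt$-adjoints only at the end. You prove the adjoint formulas first by change of variables. You then use them to shift the derivative onto the pulled-back test field $\tau^{1}_{\Phi^{-1}}\psi$. Its support $\Phi(\supp\psi)$ is compact in $\om$ because $\Phi$ is continuous, not because $\Phi^{-1}$ is Lipschitz, so it lies in $\Hc(\rot,\om)$.

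\textbf{What each route buys.} Your route needs no density statement in $\H(\rot,\om)$ or $\H(\div,\om)$ on an arbitrary open set. It also shows the gradient, rot, and div identities to be dual to the div, rot, and gradient identities for $\Phi^{-1}$. The paper's route avoids any approximation of $\Phi$. Its product-rule trick could in fact replace your mollification step verbatim, since $\tau^{1}_{\Phi^{-1}}\psi=\sum_{n}(\psi_{n}\circ\Phi^{-1})\na(\Phi^{-1})_{n}$ with each $\na(\Phi^{-1})_{n}$ curl-free.

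\textbf{Two small corrections.} In the $\Hc$ case, $\tau^{1}_{\Phi}E$ for $E\in\Cic(\om)$ is not Lipschitz, because $J_{\Phi}$ is only bounded. It is, however, a compactly supported element of $\H(\rot,\Xi)$ by your $\H$ case, and that is all the mollification argument needs. Also, the ``pointwise algebra'' and the Piola expression in your Step 2 are garbled. The adjoint computation via $J_{\Phi^{-1}}\circ\Phi=J_{\Phi}^{-1}$ and $\adj J_{\Phi^{-1}}\circ\Phi=(\det J_{\Phi})^{-1}J_{\Phi}$ that you state is the correct one.
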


A proof for differential forms
can be found in the appendix of \cite{BPS2019a}.

\begin{proof}
We use Rademacher's theorem for Lipschitz functions, that is, 
any Lipschitz continuous function is differentiable almost everywhere 
with uniformly bounded derivative.

We start with the gradient:
For $u\in\Cd^{0,1}(\om)$
we have by Rademacher's theorem 
$\wt{u}\in\Cd^{0,1}(\Xi)$
and the standard chain rule $(\wt{u})'=\wt{u'}\Phi'$ holds, i.e.,
\begin{align}
\label{transtheo:gradformulaone}
\na\wt{u}=\na\Phi\wt{\na u}=J_{\Phi}^{\top}\wt{\na u}.
\end{align}
For $u\in\Hd^{1}(\om)$ we pick a sequence $(u^{\ell})$ in $\Cd^{0,1}(\om)$ 
with compact supports in $\rt$
such that $u^{\ell}\to u$ in $\Hd^{1}(\om)$. 
Then $\wt{u^{\ell}}\to\wt{u}$ and $\wt{\na u^{\ell}}\to\wt{\na u}$ in $\Lt(\Xi)$
by the standard transformation theorem. 
We have
$(\wt{u^{\ell}})\subset\Cd^{0,1}(\Xi)$
with compact supports in $\rt$
by \eqref{transtheo:gradformulaone} and thus
$\wt{u^{\ell}}\in\Hd^{1}(\Xi)$
with
$$\wt{u^{\ell}}\to\wt{u},\quad
\na\wt{u^{\ell}}=J_{\Phi}^{\top}\wt{\na u^{\ell}}\to J_{\Phi}^{\top}\wt{\na u}\qquad
\text{in }\Lt(\Xi).$$
Since $\dot\na:\Hd^{1}(\Xi)\subset\Lt(\Xi)\to\Lt(\Xi)$ is closed,
we conclude $\wt{u}\in\Hd^{1}(\Xi)$ and 
$$\na\wt{u}=J_{\Phi}^{\top}\wt{\na u}.$$

Next, we consider the $\rot$-operator:
For this, let $E\in\Cd^{0,1}(\om)$
with compact support in $\rt$. 
Then $\wt{E}\in\Cd^{0,1}(\Xi)$
with compact support in $\rt$ and
$$J_{\Phi}^{\top}\wt{E}=\na\Phi\wt{E}
=[\na\Phi_{1}\;\na\Phi_{2}\;\na\Phi_{3}]\wt{E}
=\sum_{n}\wt{E}_{n}\na\Phi_{n}.$$
As 
$\na\Phi_{n}
\in R(\na_{\Xi_{\mathsf{b}}})
\subset N(\rot_{\Xi_{\mathsf{b}}})
\subset\H(\rot,\Xi_{\mathsf{b}})$ 
for any open and bounded subset $\Xi_{\mathsf{b}}\subset\Xi$,
e.g., $\ol{\Xi_{\mathsf{b}}}=\supp E$,
we conclude
$J_{\Phi}^{\top}\wt{E}\in\H(\rot,\Xi)$ and also
$J_{\Phi}^{\top}\wt{E}\in\Hd(\rot,\Xi)$ by mollification 
as well as (by the previous result for $\na$)
\begin{align}
\nonumber
\rot(J_{\Phi}^{\top}\wt{E})
&=\sum_{n}\na\wt{E}_{n}\times\na\Phi_{n}
=\sum_{n}(J_{\Phi}^{\top}\wt{\na E_{n}})\times\na\Phi_{n}\\
\label{transtheo:rotformulaLipschitz}
&=\sum_{n}\big([\na\Phi_{1}\;\na\Phi_{2}\;\na\Phi_{3}]\,\wt{\na E_{n}}\big)\times\na\Phi_{n}\\
\nonumber
&=\sum_{n,m}\wt{\p_{m}E_{n}}\na\Phi_{m}\times\na\Phi_{n}
=\sum_{n<m}(\wt{\p_{m}E_{n}}-\wt{\p_{n}E_{m}})\na\Phi_{m}\times\na\Phi_{n}\\
\nonumber
&=[\na\Phi_{2}\times\na\Phi_{3}\quad\na\Phi_{3}\times\na\Phi_{1}\quad\na\Phi_{1}\times\na\Phi_{2}]\,\wt{\rot E}
=(\adj J_{\Phi})\,\wt{\rot E}.
\end{align}
For $E\in\Hd(\rot,\om)$ we pick a sequence $(E^{\ell})$ in $\Cd^{0,1}(\om)$ 
with compact supports in $\rt$
such that $E^{\ell}\to E$ in $\H(\rot,\om)$. 
Then $\wt{E^{\ell}}\to\wt{E}$ and $\wt{\rot E^{\ell}}\to\wt{\rot E}$ in $\Lt(\Xi)$.
Hence by \eqref{transtheo:rotformulaLipschitz}
$J_{\Phi}^{\top}\wt{E^{\ell}}\in\Hd(\rot,\Xi)$ with
$$J_{\Phi}^{\top}\wt{E^{\ell}}\to J_{\Phi}^{\top}\wt{E},\quad
\rot(J_{\Phi}^{\top}\wt{E^{\ell}})=(\adj J_{\Phi})\,\wt{\rot E^{\ell}}\to(\adj J_{\Phi})\,\wt{\rot E}\qquad
\text{in }\Lt(\Xi).$$
Since $\dot\rot:\Hd(\rot,\Xi)\subset\Lt(\Xi)\to\Lt(\Xi)$ is closed,
we conclude $J_{\Phi}^{\top}\wt{E}\in\Hd(\rot,\Xi)$ and 
$$\rot(J_{\Phi}^{\top}\wt{E})=(\adj J_{\Phi})\,\wt{\rot E}.$$

For the divergence,
let $H\in\Cd^{0,1}(\om)$
with compact support in $\rt$. 
Then $\wt{H}\in\Cd^{0,1}(\Xi)$
with compact support in $\rt$ and
\begin{align*}
(\adj J_{\Phi})\wt{H}
&=[\na\Phi_{2}\times\na\Phi_{3}\quad\na\Phi_{3}\times\na\Phi_{1}\quad\na\Phi_{1}\times\na\Phi_{2}]\wt{H}
=\sum_{(n,m,l)}\wt{H}_{n}\na\Phi_{m}\times\na\Phi_{l},
\end{align*}
cf.~\eqref{transtheo:rotformulaLipschitz},
where the summation is over the three even permutations $(n,m,l)$ of $(1,2,3)$.
As we have 
$\na\Phi_{m}\times\na\Phi_{l}
=\rot(\Phi_{m}\na\Phi_{l})
\in R(\rot_{\Xi_{\mathsf{b}}})
\subset N(\div_{\Xi_{\mathsf{b}}})
\subset\H(\div,\Xi_{\mathsf{b}})$,
with $\Xi_{\mathsf{b}}$ as before,
we conclude 
$(\adj J_{\Phi})\wt{H}\in\H(\div,\Xi)$ and thus also 
$(\adj J_{\Phi})\wt{H}\in\Hd(\div,\Xi)$ by mollification as well as 
\begin{align}
\nonumber
\div\big((\adj J_{\Phi})\wt{H}\big)
&=\sum_{(n,m,l)}\na\wt{H}_{n}\cdot(\na\Phi_{m}\times\na\Phi_{l})
=\sum_{(n,m,l)}(J_{\Phi}^{\top}\wt{\na H_{n}})\cdot(\na\Phi_{m}\times\na\Phi_{l})\\
\label{transtheo:divformulaLipschitz}
&=\sum_{(n,m,l)}\big([\na\Phi_{1}\;\na\Phi_{2}\;\na\Phi_{3}]\,\wt{\na H_{n}}\big)\cdot(\na\Phi_{m}\times\na\Phi_{l})\\
\nonumber
&=\sum_{(n,m,l),k}\wt{\p_{k}H_{n}}\na\Phi_{k}\cdot(\na\Phi_{m}\times\na\Phi_{l})\\
\nonumber
&\overset{k=n}{=}(\det\na\Phi)\,\wt{\div H}
=(\det J_{\Phi})\,\wt{\div H}.
\end{align}
For $H\in\Hd(\div,\om)$ let $(H^{\ell})\subset\Cd^{0,1}(\om)$
be a sequence with compact supports in $\rt$
and $H^{\ell}\to H$ in $\H(\div,\om)$. 
Then $\wt{H^{\ell}}\to\wt{H}$ and $\wt{\div H^{\ell}}\to\wt{\div H}$ in $\Lt(\Xi)$. 
By \eqref{transtheo:divformulaLipschitz}
$(\adj J_{\Phi})\wt{H^{\ell}}\in\Hd(\div,\Xi)$ and also
$(\adj J_{\Phi})\wt{H^{\ell}}\to(\adj J_{\Phi})\wt{H}$ and 
$\div\big((\adj J_{\Phi})\wt{H^{\ell}}\big)=(\det J_{\Phi})\wt{\div H^{\ell}}\to(\det J_{\Phi})\wt{\div H}$
in $\Lt(\Xi)$.
Since $\dot\div:\Hd(\div,\Xi)\subset\Lt(\Xi)\to\Lt(\Xi)$ is closed,
we conclude that $(\adj J_{\Phi})\wt{H}\in\Hd(\div,\Xi)$ and 
$$\div\big((\adj J_{\Phi})\wt{H}\big)=(\det J_{\Phi})\wt{\div H}.$$

The statements on the topological isomorphisms follow by symmetry in $\Xi$ and $\om$.

Finally, concerning the inverse operators and $\Lt$-adjoints we consider, e.g., $q=1$.
Then using $J_{\Phi^{-1}}=J_{\Phi}^{-1}\circ\Phi^{-1}$ we compute 
$$\tau^{1}_{\Phi^{-1}}\tau^{1}_{\Phi}E
=\tau^{1}_{\Phi^{-1}}J_{\Phi}^{\top}\wt{E}
=J_{\Phi^{-1}}^{\top}\big((J_{\Phi}^{\top}\wt{E})\circ\Phi^{-1}\big)
=\big(J_{\Phi}^{-\top}J_{\Phi}^{\top}\wt{E})\circ\Phi^{-1}
=E,$$
i.e., $(\tau^{1}_{\Phi})^{-1}=\tau^{1}_{\Phi^{-1}}$, and
\begin{align*}
\scp{\tau^{1}_{\Phi}E}{\Psi}_{\Lt(\Xi)}
=\scp{J_{\Phi}^{\top}\wt{E}}{\Psi}_{\Lt(\Xi)}
&=\bscp{E}{(\det J_{\Phi^{-1}})(J_{\Phi}\Psi)\circ\Phi^{-1}}_{\Lt(\om)}\\
&=\bscp{E}{(\det J_{\Phi^{-1}})J_{\Phi^{-1}}^{-1}(\Psi\circ\Phi^{-1})}_{\Lt(\om)}\\
&=\bscp{E}{(\adj J_{\Phi^{-1}})(\Psi\circ\Phi^{-1})}_{\Lt(\om)}
=\scp{E}{\tau^{2}_{\Phi^{-1}}\Psi}_{\Lt(\om)},
\end{align*}
i.e., $(\tau^{1}_{\Phi})^{*}=\tau^{2}_{\Phi^{-1}}$.
\end{proof}

\begin{rem}[transformation theorem]
\label{rem:transtheo}
More explicitly, Theorem \ref{theo:transtheo} shows
\begin{align*}
\forall\,u&\in\Hd^{1}(\om)
&
\wt{u}&\in\Hd^{1}(\Xi)
&&\text{and}&
\na\wt{u}
&=J_{\Phi}^{\top}\wt{\na u},\\
\forall\,E&\in\Hd(\rot,\om)
&
J_{\Phi}^{\top}\wt{E}&\in\Hd(\rot,\Xi)
&&\text{and}&
\rot(J_{\Phi}^{\top}\wt{E})
&=(\adj J_{\Phi})\,\wt{\rot E},\\
\forall\,H&\in\Hd(\div,\om)
&
(\adj J_{\Phi})\wt{H}&\in\Hd(\div,\Xi)
&&\text{and}&
\div\big((\adj J_{\Phi})\,\wt{H}\big)
&=(\det J_{\Phi})\,\wt{\div H},\\
\forall\,E&\in\eps^{-1}\Hd(\div,\om)
&
\eps_{\Phi}J_{\Phi}^{\top}\wt{E}&\in\Hd(\div,\Xi)
&&\text{and}&
\div(\eps_{\Phi}J_{\Phi}^{\top}\wt{E})
&=(\det J_{\Phi})\,\wt{\div\eps E},
\end{align*}
where
$\eps_{\Phi}
=(\adj J_{\Phi})\wt{\eps}J_{\Phi}^{-\top}$
and $\eps$ is a real, bounded, symmetric, and positive matrix field.
Moreover, 
\begin{align*}
\tau^{1}_{\Phi}:\Hc(\rot,\om)\cap\eps^{-1}\H(\div,\om)
&\to\Hc(\rot,\Xi)\cap\eps_{\Phi}^{-1}\H(\div,\Xi),\\
\quad\tau^{1}_{\Phi}:\H(\rot,\om)\cap\eps^{-1}\Hc(\div,\om)
&\to\H(\rot,\Xi)\cap\eps_{\Phi}^{-1}\Hc(\div,\Xi)
\end{align*}
are topological isomorphisms with norms depending on $\Xi$, $\eps$,
and $J_{\Phi}$ only, and inverses $\tau^{1}_{\Phi^{-1}}$.
\end{rem}

\section{Proof of the Remaining Part in Theorem \ref{theo:gaffneyconvexbd}}
\label{sec:detproofgaffneyconvex}

Let 
$E\in\H(\rot,\om)\cap\Hc(\div,\om)$:
We pick $\om_{\ell}\subset\om$ as before.
For $\om_{\ell}$ we find
$u_{\ell}\in\H^{1}(\om_{\ell})$ 
such that for all $\psi\in\H^{1}(\om_{\ell})$
\begin{align}
\label{pdedefomn2}
\scp{u_{\ell}}{\psi}_{\H^{1}(\om_{\ell})}
=\scp{\div E}{\psi}_{\Lt(\om_{\ell})}
+\scp{E}{\na\psi}_{\Lt(\om_{\ell})}
\end{align}
(Riesz isometry). 
As $\scp{u_{\ell}}{\psi}_{\H^{1}(\om_{\ell})}
=\scp{u_{\ell}}{\psi}_{\Lt(\om_{\ell})}
+\scp{\na u_{\ell}}{\na\psi}_{\Lt(\om_{\ell})}$,
we have
\begin{align*}
\scp{E-\na u_{\ell}}{\na\psi}_{\Lt(\om_{\ell})}
=\scp{u_{\ell}-\div E}{\psi}_{\Lt(\om_{\ell})}
\end{align*}
for all $\psi\in\H^{1}(\om_{\ell})$, i.e.,
$E_{\ell}\coloneqq E-\na u_{\ell}\in\Hc(\div,\om_{\ell})$ 
and $\div E_{\ell}=\div E-u_{\ell}$.
Moreover, $E_{\ell}\in\H(\rot,\om_{\ell})$ 
with $\rot E_{\ell}=\rot E$.
By Lemma \ref{lem:gaffneyconvexsmooth}
we have $E_{\ell}\in\H^{1}(\om_{\ell})$ with
\begin{align}
\label{naomeganest2}
\norm{\na E_{\ell}}_{\Lt(\om_{\ell})}^{2}
&\leq\norm{\rot E_{\ell}}_{\Lt(\om_{\ell})}^{2}
+\norm{\div E_{\ell}}_{\Lt(\om_{\ell})}^{2}
=\norm{\rot E}_{\Lt(\om_{\ell})}^{2}
+\norm{\div E-u_{\ell}}_{\Lt(\om_{\ell})}^{2}.
\end{align}
By setting $\psi=u_{\ell}$ in \eqref{pdedefomn2} we see
\begin{align}
\label{zetanest2}
\norm{u_{\ell}}_{\H^{1}(\om_{\ell})}^{2}
&=\scp{\div E}{u_{\ell}}_{\Lt(\om_{\ell})}
+\scp{E}{\na u_{\ell}}_{\Lt(\om_{\ell})}
\leq\norm{E}_{\H(\div,\om_{\ell})}\norm{u_{\ell}}_{\H^{1}(\om_{\ell})}
\end{align}
and thus
\begin{align}
\label{zetanesttwo2}
\norm{u_{\ell}}_{\H^{1}(,\om_{\ell})}
&\leq\norm{E}_{\H(\div,\om_{\ell})}
\leq\norm{E}_{\H(\div,\om)}.
\end{align}
Combining \eqref{naomeganest2} and the equation part of \eqref{zetanest2} we observe
\begin{align*}
\norm{E_{\ell}}_{\H^{1}(\om_{\ell})}^{2}
&=\norm{E_{\ell}}_{\Lt(\om_{\ell})}^{2}
+\norm{\na E_{\ell}}_{\Lt(\om_{\ell})}^{2}\\
&\leq\norm{E_{\ell}}_{\Lt(\om_{\ell})}^{2}
+\norm{\div E-u_{\ell}}_{\Lt(\om_{\ell})}^{2}
+\norm{\rot E}_{\Lt(\om_{\ell})}^{2}\\
&=\norm{E}_{\Lt(\om_{\ell})}^{2}
+\norm{\na u_{\ell}}_{\Lt(\om_{\ell})}^{2}
+\norm{\div E}_{\Lt(\om_{\ell})}^{2}
+\norm{u_{\ell}}_{\Lt(\om_{\ell})}^{2}
+\norm{\rot E}_{\Lt(\om_{\ell})}^{2}\\
&\qquad\qquad
-2\scp{E}{\na u_{\ell}}_{\Lt(\om_{\ell})}
-2\scp{\div E}{u_{\ell}}_{\Lt(\om_{\ell})}\\
&=\norm{E}_{\H(\rot,\om_{\ell})\cap\H(\div,\om_{\ell})}^{2}
-\norm{u_{\ell}}_{\H^{1}(,\om_{\ell})}^{2},
\end{align*}
and therefore
\begin{align}
\label{hoomeganest2}
\norm{E_{\ell}}_{\H^{1}(\om_{\ell})}
\leq\norm{E}_{\H(\rot,\om_{\ell})\cap\H(\div,\om_{\ell})}
\leq\norm{E}_{\H(\rot,\om)\cap\H(\div,\om)}.
\end{align}

Again, let us denote the extension by zero to $\om$ by $\wt{\cdot}$.
Then by \eqref{zetanesttwo2} and \eqref{hoomeganest2}
the sequences $(\wt{u}_{\ell})$, $(\wt{\na u}_{\ell})$,
and $(\wt{E}_{\ell})$, $(\wt{\na E}_{\ell})$
are bounded in $\Lt(\om)$, 
and we can extract weakly converging subsequences, again denoted by the index $\ell$, such that
\begin{align*}
\wt{u}_{\ell}
&\wto{\Lt(\om)}u\in\Lt(\om),
&
\wt{E}_{\ell}
&\wto{\Lt(\om)}\wh{E}\in\Lt(\om),\\
(\wt{\na u}_{\ell})
&\wto{\Lt(\om)}F\in\Lt(\om),
&
\wt{\na E}_{\ell}
&\wto{\Lt(\om)}G\in\Lt(\om).
\end{align*}
As before, we get $\wh{E}\in\H^{1}(\om)$ and $\na\wh{E}=G$.
For $\Psi\in\Cic(\om)$
with $\Psi\in\Cic(\om_{\ell})$ for $\ell$ large enough
we compute
\begin{align*}
-\scp{\wt{\na u}_{\ell}}{\Psi}_{\Lt(\om)}=-\scp{\na u_{\ell}}{\Psi}_{\Lt(\om_{\ell})}
=\scp{u_{\ell}}{\div\Psi}_{\Lt(\om_{\ell})}
=\scp{\wt{u}_{\ell}}{\div\Psi}_{\Lt(\om)}.
\end{align*}
Letting $\ell\to\infty$ on the left and the right, we obtain
$$-\scp{F}{\Psi}_{\Lt(\om)}=\scp{u}{\div\Psi}_{\Lt(\om)},$$
showing $u\in\H^{1}(\om)$ and $\na u=F$.
Moreover, for $\psi\in\H^{1}(\om)\subset\H^{1}(\om_{\ell})$
we have 
\begin{align*}
\scp{u_{\ell}}{\psi}_{\H^{1}(\om_{\ell})}
=\scp{\wt{u}_{\ell}}{\psi}_{\Lt(\om)}
+\scp{\wt{\na u}_{\ell}}{\na\psi}_{\Lt(\om)}
\to\scp{u}{\psi}_{\H^{1}(\om)}.
\end{align*}
and, by \eqref{pdedefomn2}, we further get
$$\scp{u_{\ell}}{\psi}_{\H^{1}(\om_{\ell})}=\scp{\div E}{\psi}_{\Lt(\om_{\ell})}
+\scp{E}{\na\psi}_{\Lt(\om_{\ell})}
\to\scp{\div E}{\psi}_{\Lt(\om)}
+\scp{E}{\na\psi}_{\Lt(\om)}
=0$$
as $E\in\Hc(\div,\om)$, where the last convergence follows by Lebesgue's dominated convergence theorem.
For $\psi=u$ we get $\norm{u}_{\H^{1}(\om)}=0$, i.e., $u=0$.
Furthermore, we observe that 
$$\scp{\wh{E}}{\wt{E}_{\ell}}_{\Lt(\om)}
+\scp{\na\wh{E}}{\wt{\na E}_{\ell}}_{\Lt(\om)}
\to\scp{\wh{E}}{\wh{E}}_{\Lt(\om)}
+\scp{\na\wh{E}}{\na\wh{E}}_{\Lt(\om)}
=\norm{\wh{E}}_{\H^{1}(\om)}^{2}$$
and
\begin{align*}
\scp{\wh{E}}{\wt{E}_{\ell}}_{\Lt(\om)}
+\scp{\na\wh{E}}{\wt{\na E}_{\ell}}_{\Lt(\om)}& 
=\scp{\wh{E}}{E_{\ell}}_{\Lt(\om_{\ell})}
+\scp{\na\wh{E}}{\na E_{\ell}}_{\Lt(\om_{\ell})}\\
&\leq\norm{\wh{E}}_{\H^{1}(\om_{\ell})}
\norm{E_{\ell}}_{\H^{1}(\om_{\ell})}
\leq\norm{\wh{E}}_{\H^{1}(\om)}
\norm{E}_{\H(\rot,\om)\cap\H(\div,\om)},
\end{align*}
showing
\begin{align}
\label{hoomnormomegahat2}
\norm{\wh{E}}_{\H^{1}(\om)}
\leq\norm{E}_{\H(\rot,\om)\cap\H(\div,\om)}.
\end{align}
Finally, we have 
$E=E_{\ell}+\na u_{\ell}$ in $\om_{\ell}$, i.e., in $\om$
$$\chi_{\om_{\ell}}E
=\wt{E}_{\ell}+\wt{\na u}_{\ell}
\wto{\Lt(\om)}\wh{E}+\na u
=\wh{E}.$$
On the other hand, by Lebesgue's dominated convergence theorem we see
$\chi_{\om_{\ell}}E\to E$ in $\Lt(\om)$. Thus
$E=\wh{E}\in\H^{1}(\om)$ and by \eqref{hoomnormomegahat2}
$$\norm{E}_{\H^{1}(\om)}
\leq\norm{E}_{\H(\rot,\om)\cap\H(\div,\om)},$$
in particular,
$\norm{\na E}_{\Lt(\om)}^{2}
\leq\norm{\rot E}_{\Lt(\om)}^{2}
+\norm{\div E}_{\Lt(\om)}^{2}$ 
by letting $\ell\to\infty$ in \eqref{naomeganest2}.
\hfill$\square$

\end{document}